\theoremstyle{plain}
\newtheorem{assumption}{Assumption}
\newtheorem{theorem}{Theorem}
\newtheorem{lemma}{Lemma}
\newtheorem{definition}{Definition}
\newtheorem{corollary}{Corollary}
\newtheorem{proposition}{Proposition}
\newtheorem{remark}{Remark}
\DeclareMathOperator{\grad}{grad}  
\DeclareMathOperator{\hess}{Hess}
\DeclareMathOperator{\dt}{dt}  
\DeclareMathOperator{\de}{d}  
\DeclareMathOperator{\argmin}{argmin} 
\DeclareMathOperator{\op}{op}
\DeclareMathOperator{\T}{T}   
\DeclareMathOperator{\p}{P}  
\DeclareMathOperator{\B}{B}  
\DeclareMathOperator{\R}{R} 
\DeclareMathOperator{\mR}{\mathbb{R}} 
\DeclareMathOperator{\DR}{DR}
\DeclareMathOperator{\A}{A}  
\DeclareMathOperator{\cM}{\cal M}  
\DeclareMathOperator{\low}{low}
\begin{document}


\title{An Adaptive Cubic Regularization quasi-Newton Method on Riemannian Manifolds}

\author{ Mauricio S. Louzeiro\thanks{School of Computer Science and Technology, Dongguan University of Technology, Dongguan, Guangdong, China and  IME/UFG, Avenida Esperan\c{c}a, s/n, Campus Samambaia, CEP 74690-900, Goi\^ania, GO, Brazil  (Email: {\tt mauriciolouzeiro@ufg.br}). The work of this author was partially supported by National Natural Science Foundation of China (No. 12171087). }
\and
Gilson N. Silva\thanks{Departamento de Matem\'atica, Universidade Federal do Piau\'i­, Teresina, Piau\'i­, 64049-550, Brazil (Email: {\tt
      gilson.silva@ufpi.edu.br}). Thork of this author was partially supported  by
     CNPq, Brazil (401864/2022-7 and 306593/2022-0).}
 \and
 Jinyun Yuan\thanks{School of Computer Science and Technology, Dongguan University of Technology, Dongguan, Guangdong, China (Email: {\tt yuanjy@gmail.com}). The work of this author was partially supported by National Natural Science Foundation of China (No. 12171087), Dongguan University of Technology, China (221110093
 ), and Shanghai Municipal Science and Technology Commission, China (23WZ2501400).}
 \and
Daoping Zhang \thanks{School of Mathematical Sciences and LPMC, Nankai University, Tianjin 300071, China (Email: {\tt
      daopingzhang@nankai.edu.cn}). The work of this author was supported by National Natural Science Foundation of China (No. 12201320) and the Fundamental Research Funds for the Central Universities, Nankai University (No. 63221039 and 63231144).}
}

\maketitle

\begin{abstract}

 A quasi-Newton method with cubic regularization is designed  for solving Riemannian unconstrained nonconvex optimization problems. The proposed algorithm is fully adaptive with at most ${\cal O} (\epsilon_g^{-3/2})$ iterations to achieve a gradient smaller than $\epsilon_g$ for given $\epsilon_g$, and at most $\mathcal O(\max\{ \epsilon_g^{-\frac{3}{2}}, \epsilon_H^{-3} \})$ iterations to reach a second-order stationary point respectively. Notably, the proposed algorithm remains applicable even in cases of the gradient and Hessian of the objective function unknown. Numerical experiments are performed with  gradient and Hessian being approximated by forward finite-differences to illustrate the theoretical results and numerical comparison.\\

\noindent{\bf Key words: }Cubic Regularization $\cdot$ Optimization on Riemannian Manifolds $\cdot$ Derivative-Free $\cdot$ Retraction $\cdot$  Complexity.\\
\noindent{\bf AMS subject classification:} \,90C33\,$\cdot$\,49M37\,$\cdot$\,65K05

\end{abstract}

\maketitle
\section{Introduction}

The main objective of this paper is to develop a Riemannian quasi-Newton method with cubic regularization (R-NMCR, for short) for solving the smooth unconstrained (possibly nonconvex) optimization problems
\begin{equation}\label{eq:mainprob}
\min_{p\in {\cal M} } f(p),
\end{equation}
where ${\cal M}$ represents a given Riemannian manifold, and $f \colon {\cal M} \to \mathbb{R}$ is a sufficiently smooth cost function.

Before proceeding, we will briefly review the literature on the Newton method with cubic regularization (E-NMCR) in Euclidean spaces, that is,  ${\cal M}=\mathbb{R}^n$. It is well-known that Nesterov and Polyak \cite{nesterovpolyak2006} proposed the E-NMCR method to obtain an approximate solution of \eqref{eq:mainprob} starting from every point $x_0\in \mathbb{R}^n$ by solving the subproblem
\begin{equation}\label{sub.pro}
u_{k+1}=\argmin_{u\in \mathbb{R}^n} f(x_k)+ \langle \nabla f(x_k), u\rangle+\frac{1}{2}\langle \nabla^2 f(x_k)u, u\rangle+\frac{L_f}{6}|u|^3.
\end{equation}
 Then, the next point $x_{k+1}$ is defined by $x_{k+1}:=x_k+u_{k+1}$ for all $k\ge 0$. Here, $\nabla^2 f$ is assumed to be $L$-Lipschitz continuous, and $L_f\geq L>0$ is an estimate for $L$.

A fact is that saddle points in nonconvex problems may still pose challenges. Due to the absence of higher-order knowledge, first-order methods can only guarantee convergence to stationary points and lack control over the possibility of getting stuck at saddle points. Alternatively, second-order algorithms typically excel at avoiding saddle points by leveraging curvature information. It is known that standard assumptions allow E-NMCR to escape strict saddle points, as seen in \cite{kholer2017,Tripuraneni2018,Weiwei2013,AbsiBakerGallivan2007,grapiglia2022cubic,boumal2023intromanifolds}. This serves as one of the motivations to continue studying NMCR methods.

It has been shown that E-NMCR produces an iterate $x_k$ with $\|\nabla f(x_k)\|\leq \epsilon,$ for some given $\epsilon>0$, in at most $\mathcal O(\epsilon^{-\frac{3}{2}})$ iterations. Thanks to this optimal complexity result,  Newton's method with cubic regularization was proposed \cite{grapiglia2022cubic,agarwal2021adaptive}. As we can see, E-NMCR solves a cubic model approximating $f$ in each iteration, wherein the full Hessian matrix must be calculated. This may render E-NMCR less competitive a priori or even infeasible if the Hessian is unavailable. To overcome these drawbacks, an adaptive regularization was established for E-NMCR. In these adaptive schemes, subproblem \eqref{sub.pro} is solved inexactly to reduce computational costs, as highlighted in \cite{grapiglia2022cubic,agarwal2021adaptive,birgin2019,cartis2012oracle}. To clarify, in adaptive schemes, the subproblem is addressed as follows:
\begin{equation}\label{adap.sub.pro}
    u_{k+1}=\argmin_{u\in \mathbb{R}^n} f(x_k)+ \langle \nabla f(x_k), u\rangle+\frac{1}{2}\langle H_ku, u\rangle+\frac{\sigma_k}{6}\|u\|^3,
\end{equation}
where $H_k$ satisfies some form of inexact condition, and $\sigma_k>0$ can be chosen in various ways. In \cite{CartisToint2011,CartisToint20112}, it is proposed that 
\begin{equation}\label{1.inex.cond}
\|(H_k-\nabla^2 f(x_k))u_{k+1}\|\leq \eta_1\|u_{k+1}\|^2
\end{equation}
holds for some matrix $H_k$ and $\eta_1\geq 0.$

It is evident that at iteration $k$ of subproblem \eqref{adap.sub.pro}, knowledge of $H_k$ is necessary. However, obtaining $H_k$ itself requires knowledge of $x_{k+1}$ because $H_k$ must satisfy the inexact condition in \eqref{1.inex.cond}. Thus, the implementation of methods involving conditions like \eqref{1.inex.cond} demands additional computational effort. This is most clearly observed in the complexity result derived in \cite{cartis2012oracle}, which is $\mathcal O(m[\epsilon^{-\frac{3}{2}}+|\log(\epsilon)|]),$ where $m$ is the dimension of the domain of the objective function. To enhance this complexity result, \cite{grapiglia2022cubic, WangLan2019} have proposed the following inexactness condition:
\begin{equation}\label{2.inex.cond}
\|H_k-\nabla^2 f(x_k)\|\leq \eta_2\|u_{k}\|,
\end{equation}
with $\eta_2\geq 0,$ which no longer involves the subsequent iteration.

For cubic model \eqref{adap.sub.pro}, an E-NMCR algorithm \cite{grapiglia2022cubic} was recently proposed based on the combination of inexact condition \eqref{2.inex.cond}, approximated Hessian computed by the finite difference method and nonmonotonic line search with the complexity .
$\mathcal O(m\epsilon^{-\frac{3}{2}}),$ where $m$ is the dimension of the domain of the objective function. Furthermore, the E-NMCR with finite difference updates on the Hessian approximation requires at most $\mathcal O(m\max\{\epsilon_g^{-\frac{3}{2}}, \epsilon_H^{-3}\})$ iterations to find an approximate second-order stationary point,  that is, an iterate $x_k$ such that
\begin{equation}\label{1.secordem}
\|\nabla f(x_k)\|\leq \epsilon_g \quad\text{and}\quad \lambda_{\min}(\nabla^2 f(x_k))\geq -\epsilon_H,
\end{equation}
where $\lambda_{\min}(\nabla^2 f(x_k))$ denotes the smallest eigenvalue of $\nabla^2 f(x_k).$

{\bf Related works on manifolds:}  Similarly, in \cite{ZhangZhang2018}, the following cubic subproblem is proposed to analyze optimization problem \eqref{eq:mainprob}
$$
\begin{cases}
    \begin{array}{cc}
        v_k:=\argmin_v \hat{f}_k(0)+\langle \nabla \hat{f}_k(0) , v \rangle+\dfrac{1}{2}\langle \nabla^2\hat{f}_k(0)[v],v\rangle+\dfrac{\sigma}{6}\|v\|^{3} \\
        p_{k+1}:=R(p_k, v_k),
    \end{array}
\end{cases}
$$
where $v\in \T_{p_k}{\cal M},$  $\hat{f}_{k}=f\circ \R_{p_k}\colon\T_{p_k}{\cal M}\to \mathbb{R}$ is the pullback associated with $f$, $\R(\cdot, \cdot)$ is a retraction on ${\cal M},$ and $\sigma>0$ is an estimate for the Lipschitz Hessian constant. To achieve the same complexity as in Euclidan space, some conditions are assumed in \cite{ZhangZhang2018}, and consequently, some constants need to be known, namely:
\begin{equation}\label{eq:ZHACOND}
\begin{cases}
{\cal M} \text{ must be compact}, \quad \|\R(x,p)-x-p\|\leq L_2\|p\|^2, \quad \text{for all} \quad x\in {\cal{M}}, \quad p\in \T_{p}{\cal M},\\
\left|\langle (\nabla^2_{\xi} \hat{f}_x(\eta) - \nabla^2_{\xi} \hat{f}_x(0))[v], v\rangle  \right| \leq L^{\cal R}_{H}\|\eta\|, \,\, \forall \, \eta \in \T_{x} {\cal M},\;\; \|\eta\|\leq {\cal R}, \quad \forall v\in \T_{x} {\cal M}, \quad \|v\|=1,\\
G:=\max_{x\in{\cal M}} \|\nabla f(x)\|_F,\\
\kappa_B:=\max_{x\in{\cal M}}\max_{\xi\in \T_x{\cal M}, \|\xi\|=1} \|\hess f(x)[\xi]\|,\\
{\cal R}=3\kappa_B+3\sqrt{G}.
\end{cases}
\end{equation}
Moreover, to execute the algorithm proposed in \cite{ZhangZhang2018}, it is necessary to choose $\sigma$ such that
\begin{equation}\label{eq:sigmaZhang}
\sigma>\max\left\{ \left(\sqrt{10L_2\kappa_B+\frac{2}{3}L^{\cal R}_{H}+9L_2^2G}+3L_2\sqrt{G}\right)^2, 1\right\},
\end{equation}
where $L_2, \kappa_B, L^{\cal R}_{H}, G$ are the constants defined in \eqref{eq:ZHACOND}.  Thus, it is not difficult to see that the algorithm proposed in \cite{ZhangZhang2018} can become impractical. As in the Euclidean context, it was proved that the R-NMCR finds an approximate second-order stationary point within $\mathcal O(\epsilon^{-\frac{3}{2}})$ iterations \cite{ZhangZhang2018}.

In \cite{agarwal2021adaptive}, a more general algorithm was proposed to approximately solve problem  \eqref{eq:mainprob}. Specifically, (i) the two main results in \cite{agarwal2021adaptive} can be applied to every complete Riemannian manifold such that the exponential and retraction maps can be used, (ii) the subproblem in \cite{agarwal2021adaptive} is the same as the one studied in \cite{ZhangZhang2018}, but $\sigma$ is adaptively chosen and does not depend on any constant, as in \eqref{eq:sigmaZhang}, (iii) the complexity order  $\mathcal O(\epsilon^{-\frac{3}{2}})$ is guaranteed when using both the exponential map and a general retraction.

Consider subsample and cubic regularization techiques to approximately solve the problem \cite{DengMu2023}
\begin{equation*}
\min_{p\in {\cal M} } f(p):=\frac{1}{n}\sum_{i=1}^n f_i(p),
\end{equation*}
where $f_i \colon {\cal M} \to \mathbb{R}$ is a sufficiently smooth cost or loss function for each $i\in\{1,\ldots, n\}$. In this case, the suproblem takes the following structure:
\begin{equation}\label{eq:BOUDEN}
\begin{cases}
    \begin{array}{cc}
        v_k:=\argmin_v \hat{f}_k(0)+\langle \mathcal{G}_k , v \rangle+\dfrac{1}{2}\langle \mathcal{H}_k[v],v\rangle+\dfrac{\sigma_k}{3}\|v\|^{3} \\
        p_{k+1}:=R(p_k, v_k),
    \end{array}
\end{cases}
\end{equation}
with $v\in \T_{p_k}{\cal M}$ and $\mathcal{G}_k$ and $\mathcal{H}_k[v]$ being, respectively, the approximated Riemannian gradient and Hessian calculated using the subsampling technique, i.e.,
\begin{equation}\label{eq:sumGH}
\mathcal{G}_k=\frac{1}{|S_g|}\sum_{i\in S_g} \grad f_i(p_k) \quad \text{and}\quad\mathcal{H}_k[v]:=\frac{1}{|S_H|}\sum_{i\in S_H} \hess f_i(p_k)[v],
\end{equation}
where $S_g, S_H\in \{1,\ldots, n\}$ are the sets of the subsampled indices used for estimating the Riemannian gradient and Hessian, respectively. It is straighforward to see that if $n=1$, then $i=1,$ $\mathcal{G}_k=\grad f(p_k),$ $\mathcal{H}_k[v]=\hess f(p_k)[v],$ and hence subproblem \eqref{eq:BOUDEN} is the same as in \cite{agarwal2021adaptive,ZhangZhang2018}. To prove the main results in \cite{DengMu2023}, some strong assumptions are made, for instance: (i) the knowledge of the Lipschitz Hessian constant, (ii) the knowledge of a constant that bounds the inexact Hessian defined in \eqref{eq:sumGH}, which consequently implies that $\sigma_k$ depends on these constants. Under these conditions, the best second-order complexity achievement obtained in \cite{DengMu2023} is $\mathcal O(\max\{\epsilon_g^{-2}, \epsilon_H^{-3}\}),$ where $\epsilon_g$ and $\epsilon_H$ are as in \eqref{1.secordem}. To improve this complexity results,  an additional condition was assumed on the solution of the subproblem, namely, $v_k$ must satisfy the following system:
\begin{equation*}
   \begin{cases}
    \begin{array}{cc} \langle G_k, v_k\rangle+\langle H_k[v_k],v_k\rangle+\sigma_k\|v_k\|^3=0\\
    \langle H_k[v_k],v_k\rangle+\sigma_k\|v_k\|^3\geq 0.
    \end{array}
\end{cases}
\end{equation*}
Thus, the new  second-order complexity result in \cite{DengMu2023} is $\mathcal O(\max\{\epsilon_g^{-\frac{3}{2}}, \epsilon_H^{-3}\}).$

{\bf Our contributions:} A new R-NMCR is proposed to approximately solve \eqref{eq:mainprob} that is entirely adaptive which means that Lipschitz gradient or Hessian constants are not necessary known in advance, neither the regularization parameter of the cubic models and the accuracy of the Hessian approximations
jointly adjusted by using a nonmonotone line search criterion. The main results obtained here are applicable to all complete Riemannian manifold where the exponential and retraction maps can be utilized. Our subproblem is also inexactly solved in the sense of approximated Riemannian gradient and Hessian. Moreover, under standard assumptions, the proposed algorithm requires at most $\mathcal O(\max\{\epsilon_g^{-\frac{3}{2}}, \epsilon_H^{-3}\})$ iterations to achieve a second-order stationary point. This means obtaining a point $p_k$ that satisfies a condition similar to \eqref{1.secordem} but in the Riemannian context. Finally, the new algorithm can be applied when the gradient and Hessian approximations are computed by using forward finite-differences.

The subsequent sections of this paper are structured as follows. Section 2 provides a concise review of the preliminaries. In Section 3, the primary derivative-free algorithm proposed are introduced. The worst-case complexity of the proposed algorithm is analyzed in Section 4. Section 5  detailed insights into the computation of the approximated Riemannian gradient and Hessian are given. In Section 6, the results of numerical tests conducted to showcase the effectiveness of the proposed algorithm are displayed. Finally a summary and concluding remarks are given in the last section.



\section{Preliminary }

In this section, we review notations, definitions, and basic properties related to Riemannian manifolds, which are utilized throughout the paper. These concepts can be found in introductory books on Riemannian geometry and optimization on manifolds, such as \cite{DoCa92, lee2006riemannian, Lee:2003:1, Tu:2011:1} and \cite{boumal2023intromanifolds,absil2008optimization}.

Suppose that ${\cal M}$ is a $n$-dimensional connected, smooth manifold. The tangent space at $p\in {\cal M}$ is a $n$-dimensional vector space denoted by $\T_p{\cal M}$ whose origin is $0_p$. The disjoint union of all tangent spaces $ \T{\cal M} \coloneqq \cup_{p\in {\cal M}} ( \{p\} \times \T_p{\cal M} ) $ is called the {\it tangent bundle} of ${\cal M}$. The Riemannian metric at $p\in {\cal M}$ is denoted by $ {\langle}  \cdot  ,   \cdot  {\rangle}_p\colon \T_p{\cal M} \times \T_p{\cal M} \to \mathbb{R}$ and $\|\cdot \|_p$ for the associated norm in $ \T_p{\cal M}$.
For simplicity we shall omit all these indices when no ambiguity arises.
Assume that ${\cal M}$ is equipped with a Riemannian metric, that is a {\it Riemannian manifold}.

 A {\it vector field} $V$ on ${\cal M}$ is a correspondence associated to each point $p\in {\cal M}$ a vector $V(p)\in \T_p{\cal M} $. Let us denote the  smooth vector fields on ${\cal M} $ by ${\cal X}({\cal M} )$ and $\bar{\nabla} \colon {\cal X}({\cal M} )  \times {\cal X}({\cal M} ) \to {\cal X}({\cal M} )$ for the Levi-Civita connection associated to $\mathcal{M}$.  The tangent vector of a smooth curve $\gamma\colon I  \to {\cal M}$ defined on some open interval $ I\subseteq \mathbb{R}$  is denoted by $ \dot{\gamma}(t)$. A vector field $V$ along a smooth curve $\gamma$  is said to be {\it parallel} if and only if $\bar{\nabla}_{ \dot{\gamma} } V=0$. The $\gamma$ is a {\it geodesic} when $ \dot{\gamma}$ is self-parallel. When the geodesic equation $\bar{\nabla}_{\dot{\gamma}}  \dot{\gamma}=0$ is a second order nonlinear ordinary differential equation, the geodesic $\gamma=\gamma _{v}( \cdot ,p)$ is determined by its position $p$ and velocity $v$ at $p$.  A Riemannian manifold is {\it complete} if the geodesics are defined for all values of $t\in \mathbb{R}$. Owing to  the completeness of the Riemannian manifold $\mathcal{M}$, the {\it exponential map} $\exp_{p}\colon \T_{p}  \mathcal{M} \to \mathcal{M} $ is  given by $\exp_{p}v\,=\, \gamma _{v}(1,p)$, for each $p\in \mathcal{M}$. Next, a detailed definition is provided for a map that generalizes the exponential map and plays a crucial role in the approach presented in this paper.

 \begin{definition}[{\cite[Definition 4.1.1 and Sect. 5]{absil2008optimization}}]
	A retraction on  ${\cal M}$ is a smooth map
	$$
	\R \colon \T{\cal M} \to  {\cal M} \colon (p,v) \mapsto \R_{p}(v)
	$$
	such that each curve $c(t)= \R_p(tv)$ satisfies $c(0)=p$ and $c'(0)=v$. Retractions that additionally satisfy $c''(0)=0$ are termed second-order retractions.
\end{definition}
The set of smooth scalar functions on $\cM$ is denoted by $\mathcal{F}(\cal{M})$. The {\it differential} of a function $f\in \mathcal{F}(\cal{M})$ at $p$ is the linear map ${\cal D}f(p)\colon  \T_p{\cal M} \to \mathbb{R}$ which assigns to each $v\in \T_p{\cal M}$ the value
$$
{\cal D}f(p)[v] = \dot{\gamma}(t_0)[f] = \frac{\de}{\dt}(f\circ \gamma)\Bigl|_{t=t_0},
$$
for every smooth curve $\gamma\colon I  \to {\cal M}$ satisfying $\gamma(t_0)=p$ and $\dot{\gamma}(t_0)=v$. The {\it Riemannian  gradient} at $p$ of $f$, $\grad f(p)$, is defined by the unique tangent vector at $p$ such that ${\langle}  \grad f(p) , v  {\rangle}_p = {\cal D}f(p)[v]$ for all $v\in \T_p{\cal M}.$ The {\it Riemannian Hessian} of $f \in \mathcal{F}(\cal{M}) $ at $p \in \cal{M}$ is a linear operator $\hess f(p): \T_{p}{\cal{M}} \to {\T_{p}}{\cal{M}}$ defined as
	$
	\hess f(p)[u] = \bar{\nabla}_{u} \grad f.
	$
	For real functions on vector spaces (such as $\T_p\cM$), we let $\nabla$ and $\nabla^2$ denote the usual
gradient and Hessian.
	The norm of a linear map $ A \colon  \T_p{\cal M} \to  \T_p{\cal M}$ is defined  by $\|A\|_{\op} \coloneqq \sup\{  \| A v \| \colon  v\in \T_p{\cal M}, \, \|v\|=1  \}$.

 For each $t_0,t \in I $, $t_0 <t$,  the connection $\bar{\nabla}$ induces an isometry  $\p_{\gamma,t_0,t} \colon \T_{\gamma(t_0)} {\mathcal{M}} \to \T_{\gamma(t)} {\mathcal{M}}$  relative to Riemannian metric on ${\cal M}$ given by $ \p_{\gamma,t_0,t}\, v = V(\gamma(t))$, where $V$ is the unique vector field on $\gamma$ such that $ \bar{\nabla}_{\dot{\gamma}(t)}V(\gamma(t)) = 0$ and $V(\gamma(t_0))=v$.

  The isometry $\p_{\gamma,t_0,t}$  is called {\it parallel transport} along  of  $\gamma$ joining  $\gamma(t_0)$ to $\gamma(t)$.
For simplicity $\p_v \colon \T_p{\cal M} \to \T_{\exp_pv}{\cal M} $ denotes parallel transport along the geodesic $\gamma(t)= \exp_p tv$ from $t_0=0$ to $t=1$.

\begin{definition}[{\cite[Definition 2]{agarwal2021adaptive}}]
A function $f\colon \cM \to \mR$ has an $L$-Lipschitz continuous Hessian if it is twice differentiable and if
	\begin{equation*}
		\left\| \p_{v}^{-1}\circ \hess f(\exp_pv) \circ \p_{v} - \hess f(p)\right\|_{\op} \leq L \|v\| \mbox{ \,\, for all \,\,} (p,v)\in \T\cM.
	\end{equation*}
\end{definition}
  The following Lemma provides classic inequalities that will be explored extensively throughout this paper.
 \begin{lemma}[{\cite[Proposition 2]{agarwal2021adaptive}}]\label{lem:ineq.hess.lips}
  Let $f\colon \cM  \to \mR$ be twice differentiable on a complete Riemannian manifold $\cM$.  If $f\colon \cM \to \mR$ has an $L$-Lipschitz continuous Hessian then
\begin{equation*}\label{}
	\left| f(\exp_pv) - f(p) - \langle\grad f(p),v\rangle - \dfrac{1}{2}\langle\hess f(p)[v],v\rangle \right| \leq \dfrac{L}{6}\|v\|^{3},
\end{equation*}
and
\begin{equation*} \label{}
	\left\| \p_{v}^{-1} \grad f(\exp_pv) - \grad f(p) - \hess f(p)[v]  \right\|\leq\dfrac{L}{2}\|v\|^{2}.
\end{equation*}
\end{lemma}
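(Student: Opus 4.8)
The plan is to obtain both estimates from a single mechanism: restrict $f$ (resp.\ $\grad f$) to the geodesic $\gamma(t) = \exp_p(tv)$, $t\in[0,1]$, transport everything back to the fixed vector space $\T_p\cM$ by parallel transport, and apply the one–dimensional Taylor / fundamental–theorem–of–calculus identities with integral remainder.

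For the first inequality, set $g(t) := f(\gamma(t))$. Then $g'(t) = \langle \grad f(\gamma(t)), \dot\gamma(t)\rangle$, and since $\gamma$ is a geodesic, compatibility of $\bar{\nabla}$ with the metric together with $\bar{\nabla}_{\dot\gamma}\dot\gamma = 0$ and $\hess f(q)[u] = \bar{\nabla}_u\grad f$ give $g''(t) = \langle \hess f(\gamma(t))[\dot\gamma(t)], \dot\gamma(t)\rangle$. Because $\dot\gamma$ is parallel along $\gamma$, we have $\dot\gamma(t) = \p_{tv}\,v$, where $\p_{tv}$ denotes parallel transport from $p$ to $\gamma(t)$ along $\gamma$ (a linear isometry), so
\begin{equation*}
g''(t) - g''(0) = \bigl\langle \bigl(\p_{tv}^{-1}\circ \hess f(\gamma(t))\circ \p_{tv} - \hess f(p)\bigr)[v],\, v\bigr\rangle .
\end{equation*}
Applying the $L$-Lipschitz hypothesis with base point $p$ and tangent vector $tv$ (so $\|tv\| = t\|v\|$ and $\exp_p(tv) = \gamma(t)$) bounds the operator norm of the bracketed map by $Lt\|v\|$, hence $|g''(t) - g''(0)| \le Lt\|v\|^3$ via $|\langle A[v],v\rangle| \le \|A\|_{\op}\|v\|^2$. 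Since $g(0) = f(p)$, $g'(0) = \langle\grad f(p),v\rangle$, $g''(0) = \langle\hess f(p)[v],v\rangle$ and $g(1) = f(\exp_pv)$, Taylor's formula with integral remainder gives
\begin{equation*}
f(\exp_pv) - f(p) - \langle\grad f(p),v\rangle - \tfrac12\langle\hess f(p)[v],v\rangle = \int_0^1 (1-t)\bigl(g''(t) - g''(0)\bigr)\,\de t ,
\end{equation*}
and taking absolute values with $\int_0^1 (1-t)t\,\de t = \tfrac16$ yields the bound $\tfrac{L}{6}\|v\|^3$.

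For the gradient inequality I would apply the same idea to the $\T_p\cM$-valued curve $h(t) := \p_{tv}^{-1}\grad f(\gamma(t))$. The standard fact that parallel transport commutes with covariant differentiation along $\gamma$ gives $h'(t) = \p_{tv}^{-1}\bigl(\bar{\nabla}_{\dot\gamma(t)}\grad f\bigr) = \bigl(\p_{tv}^{-1}\circ \hess f(\gamma(t))\circ \p_{tv}\bigr)[v]$, so $h(0) = \grad f(p)$, $h'(0) = \hess f(p)[v]$, $h(1) = \p_v^{-1}\grad f(\exp_pv)$, and
\begin{equation*}
\p_v^{-1}\grad f(\exp_pv) - \grad f(p) - \hess f(p)[v] = \int_0^1 \bigl(h'(t) - h'(0)\bigr)\,\de t .
\end{equation*}
Exactly as before $\|h'(t) - h'(0)\| \le Lt\|v\|^2$, and $\int_0^1 Lt\|v\|^2\,\de t = \tfrac{L}{2}\|v\|^2$.

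The Taylor/FTC steps and the elementary integrals are routine; the one point requiring genuine care — and what I regard as the main technical obstacle — is justifying the two differentiation identities $g''(t) = \langle\hess f(\gamma(t))[\dot\gamma(t)],\dot\gamma(t)\rangle$ and $h'(t) = (\p_{tv}^{-1}\circ\hess f(\gamma(t))\circ\p_{tv})[v]$. Both rest on compatibility of the Levi-Civita connection with the metric and with parallel transport and on the geodesic equation $\bar{\nabla}_{\dot\gamma}\dot\gamma = 0$; once these are in place, rewriting the Hessian at $\gamma(t)$ as the conjugate $\p_{tv}^{-1}\circ\hess f(\gamma(t))\circ\p_{tv}$ is precisely the form to which the Lipschitz hypothesis applies verbatim.
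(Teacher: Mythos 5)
Your argument is correct and is essentially the standard proof of this result; the paper itself offers no proof, simply quoting it from \cite[Proposition 2]{agarwal2021adaptive}, and your geodesic-restriction/parallel-transport/Taylor-with-integral-remainder argument is precisely the one used there. The two differentiation identities you flag are indeed the only points needing care, and your justifications (metric compatibility, $\bar{\nabla}_{\dot\gamma}\dot\gamma=0$, and the expression of the covariant derivative via parallel transport) are the right ones.
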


\begin{lemma}[{\cite[Lemma 4]{grapiglia2022cubic}}]\label{lem:ineq.num.aux}
Given two real constants $a,b>0$ and a set $\left\{z_{k} \colon k= 1, \ldots, \bar{N}\right\}$ of nonnegative real numbers, with $\bar{N}\geq 2$ natural, let
$
\bar{k} \coloneqq \argmin_{ k \in \{1,\ldots,\bar{N}-1 \} } ( ( z_{k} )^{a} + ( z_{k+1} )^{a} ).
$
If
$
\sum_{k=1}^{\bar{N}} (z_{k})^{a}\leq b
$
then the inequality
$
\max \{ z_{\bar{k}},z_{\bar{k}+1} \} \leq \left( 2b /(\bar{N}-1) \right)^{1/a}
$
holds.
\end{lemma}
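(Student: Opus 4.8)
The plan is a straightforward averaging (pigeonhole) argument over the $\bar{N}-1$ pairs of consecutive indices. First I would add up the quantities $(z_k)^a+(z_{k+1})^a$ for $k=1,\dots,\bar{N}-1$; reindexing the ``shifted'' part of each summand shows
$$
\sum_{k=1}^{\bar{N}-1}\left((z_k)^a+(z_{k+1})^a\right)=\sum_{k=1}^{\bar{N}-1}(z_k)^a+\sum_{k=2}^{\bar{N}}(z_k)^a\;\le\;2\sum_{k=1}^{\bar{N}}(z_k)^a\;\le\;2b,
$$
where the first inequality holds because each of the two partial sums is dominated by the full sum $\sum_{k=1}^{\bar{N}}(z_k)^a$ (all $z_k$ being nonnegative) and the last inequality is exactly the hypothesis.

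Next, since $\bar{k}$ is defined as a minimizer of $k\mapsto (z_k)^a+(z_{k+1})^a$ over $\{1,\dots,\bar{N}-1\}$, its value there is no larger than the average of these $\bar{N}-1$ numbers, so
$$
(z_{\bar{k}})^a+(z_{\bar{k}+1})^a\;\le\;\frac{1}{\bar{N}-1}\sum_{k=1}^{\bar{N}-1}\left((z_k)^a+(z_{k+1})^a\right)\;\le\;\frac{2b}{\bar{N}-1},
$$
where the division is legitimate because $\bar{N}\ge 2$ forces $\bar{N}-1\ge 1$.

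Finally, because $z_{\bar{k}},z_{\bar{k}+1}\ge 0$ and $a>0$, each of the two terms $(z_{\bar{k}})^a$ and $(z_{\bar{k}+1})^a$ is by itself at most $2b/(\bar{N}-1)$; hence $\bigl(\max\{z_{\bar{k}},z_{\bar{k}+1}\}\bigr)^a\le 2b/(\bar{N}-1)$, and raising both sides to the power $1/a$ (valid since $a>0$ and everything in sight is nonnegative) yields the asserted bound. There is essentially no obstacle in this argument; the only points demanding a little care are the bookkeeping in the reindexing that produces the factor $2$ and the remark that $\bar{N}-1\ge 1$ so that dividing by it is permitted.
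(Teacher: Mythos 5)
Your argument is correct and complete: the reindexing gives the factor $2$, the minimum-versus-average step uses $\bar{N}-1\ge 1$, and dropping one nonnegative term before taking the $1/a$-th root is valid. The paper itself does not prove this lemma but merely cites it from the literature, and your averaging/pigeonhole argument is precisely the standard proof of that cited result.
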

 {\it In this paper, all manifolds $\cM$ are assumed to be  Riemanian, connected, finite dimensional, and complete.}



\section{The Riemannian quasi-Newton Method}

In this section, our aim is to introduce a comprehensive Riemannian quasi-Newton method with cubic regularization and demonstrate its convergence. The algorithm presented below is applicable when both the gradient and Hessian are known. Alternatively, it can be employed with any approximation of the gradient and Hessian that satisfies the assumptions outlined in Step 1.1.

\begin{mdframed}
	\textbf{Algorithm 1: General R-NMCR}
	\\[0.2cm]
	\textbf{Step 0.} Choose a retraction $\R$,  a point $(p_{0}, v_{0}) \in \T {\cal M}$ ($v_{0}\neq 0$) and constants $\sigma_{1} > 0$ and $\theta \geq 0$. Set $k = 1$.
	\\
	{\bf Step 1.}  Find the smallest integer $\alpha \geq 0$ such that $2^{\alpha-1}\sigma_{k} \geq \sigma_{1}$.\\
	 {\bf Step 1.1.}  Choose a vector $g_{k,\alpha}\in  \T_{p_k} {\cal M}$ and an operator $\B_{k,\alpha} \colon  \T_{p_k} {\cal M} \to  \T_{p_k} {\cal M}$ that satisfy
		\begin{equation}\label{eq:grahess.alg}
		\| \grad f(p_k) - g_{k,\alpha} \| \leq \frac{ \kappa_{g} }{2^{\alpha-1} } \| v_{k-1} \|^2,\qquad\qquad
		\| \hess f(p_k) - \B_{k,\alpha} \|_{\op} \leq \frac{\kappa_{\B} }{2^{\alpha-1} } \| v_{k-1} \|,
		\end{equation}
		for fixed constants $ \kappa_g, \kappa_{\B} \geq 0$ that are independent of $k$ and $\alpha$.\\
	{\bf Step 1.2.} Consider the cubic model  $m_{k,\alpha}$ on $\T_{p_k}\mathcal{M}$ given by
	\begin{equation}\label{eq:model.f}
	m_{k, \alpha}(v) = f(p_k)+\langle g_{k,\alpha} , v \rangle+\dfrac{1}{2}\langle {\B}_{k,\alpha}[v],v\rangle+\dfrac{2^{\alpha}\sigma_k}{6}\|v\|^{3},
        \end{equation}
and compute an approximate minimizer $v_{k,\alpha}$ of $m_{k,\alpha}$ over $\T_{p_k}\mathcal{M}$ that satisfies
	\begin{equation}\label{eq:consubpalg}
	m_{k,\alpha}( v_{k,\alpha} )\leq f(p_{k})\quad\text{and}\quad \|\nabla m_{k,\alpha}(v_{k,\alpha} )\|\leq\theta \| v_{k,\alpha} \|^2.
	\end{equation}
	Optionally, if second-order criticality is targeted, $v_{k,\alpha}$ must also satisfy
	\begin{equation}\label{eq:icrehe3}
          \lambda_{\min}( \B_{k,\alpha} ) \geq -   2^{\alpha-1}\sigma_{k}  \| v_{k,\alpha} \|
           -  \theta \| v_{k-1} \|.
       \end{equation}
	{\bf Step 1.3.} If
	\begin{equation}\label{eq:cond.mono.alg}
		f \left( \R_{p_k}(v_{k,\alpha} ) \right)
			 \leq  f(p_k) + \frac{ \sigma_k }{24} \| v_{k-1} \|^3
			- \frac{  2^{\alpha } \sigma_k }{24}\| v_{k,\alpha} \|^{3}
	\end{equation}
	hold, set $\alpha_{k}=\alpha$, $v_k = v_{k,\alpha_k}$ and go to Step 2. Otherwise, set $\alpha \coloneqq \alpha + 1$ and return to Step 1.1.
	\\
	{\bf Step 2.} Set $p_{k+1}=\R_{p_k}( v_k )$, $\sigma_{k+1}=2^{\alpha_{k}-1}\sigma_{k}$, $k:=k+1$, and return to Step 1.
\end{mdframed}

\begin{remark}
 Note that Algorithm 1 works well when $g_{k,\alpha} = \grad f(p_k)$ and $\B_{k,\alpha} = \hess f(p_k)$ known for all $\alpha \geq 0$ because the inequalities in \eqref{eq:grahess.alg} are satisfied naturally. Furthermore, closed-form expressions of the approximations $g_{k,\alpha}$ and $\B_{k,\alpha}$ not only satisfy \eqref{eq:grahess.alg}, but also eliminate evaluations of the gradient and Hessian. Hence, a Derivative-Free R-NMCR algorithm can be developed from the general R-NMCR algorithm with these closed-form expressions for $g_{k,\alpha}$ and $\B_{k,\alpha}$.

\end{remark}

\begin{remark}

 For implementation of Algorithm 1, instead of constants $\kappa_g$ and $\kappa_{\B}$ known, it is possible to choose $g_{k,\alpha}$ and $\B_{k, \alpha}$ satisfying \eqref{eq:grahess.alg} for $\kappa_g$ and $\kappa_{\B}$ unknown. This flexibility is particularly crucial when $\kappa_g$ and $\kappa_{\B}$ depend on the Lipschitz constant of $\hess f$. Some examples of approximations of $g_{k,\alpha}$ and $\B_{k,\alpha}$ satisfying \eqref{eq:grahess.alg}, with $\kappa_g$ and $\kappa_{\B}$ dependent on the Lipschitz constant of $\hess f$, are given in Section~\ref{sec:FDAGH}.
\end{remark}


For theoretical analysis, some basic assumptions for the cost function $f$ of problem \eqref{eq:mainprob} are given as follows. The first one is very common and says that the cost function $f$ is lower bounded.

\begin{assumption}\label{Ass1}
There exists $f_{\low}\in\mathbb{R}$ such that $f(p)\geq f_{\low}$ for all $p\in  {\cal M}$.
\end{assumption}

Before presenting the second assumption, we shall introduce a notation that will be used throughout this paper. For a given cost function $f$ and a specified retraction $\R$, at each iterate $p_k$ of Algorithm 1, we will consider the following notation:
\begin{equation}\label{notation}
\hat {f}_k  \coloneqq f \circ \R_{p_k} \colon \T_{p_k} \cM \to \mathbb{R}.
\end{equation}
The function $\hat {f}_k$ is often called the pullback of the cost function $f$ to the tangent space $\T_{p_k} \cM$.


\begin{assumption}\label{Ass2}
The function $f$ is twice continuously differentiable and there exists a constant $L$ such that, at each iteration $k$, the inequality
\begin{equation}\label{eq:Ass2}
\left| \hat{f}_k(v) -  f(p_k) - \langle\grad f(p_k),v\rangle - \dfrac{1}{2}\langle\hess f(p_k)[v],v\rangle \right| \leq \frac{L}{6}\|v\|^{3} \,\, \mbox{ holds for all } \, v\in \T_{p_k} {\cal M}.
\end{equation}
\end{assumption}

\begin{remark}\label{rem:Ass2}
It follows from the definition of $\hat{f}_k$ in \eqref{notation} that
\begin{equation}\label{eq:rel.betw.f.hf}
f(p_k) = \hat{f}_k(0) \quad \text{and} \quad \grad f(p_k) = \nabla \hat{f}_k(0) .
\end{equation}
Moreover, if the retraction $\R$ employed in the definition of $\hat{f}_k$ is a second-order retraction, there is
\begin{equation}\label{eq:rel.betw.f.hf-hess}
\hess f(p_k) =\nabla^2  \hat{f}_k(0),
\end{equation}
as demonstrated in {\cite[Proposition 5.45]{absil2008optimization}}. Therefore, whenever $\R$ is a second-order retraction (e.g., $\R=\exp$), \eqref{eq:Ass2} can be reformulated as
\begin{equation}\label{eq:Ass2.r}
\left| \hat{f}_k(v) -  \hat{f}_k(0) - \langle \nabla  \hat{f}_k(0) ,v\rangle - \dfrac{1}{2}\langle \nabla^2  \hat{f}_k(0) [v],v\rangle \right| \leq \frac{L}{6}\|v\|^{3}  \,\, \mbox{ for all } \, v\in \T_{p_k} {\cal M}.
\end{equation}
On the other hand, as $\hat{f}_k$ is defined on the vector space $\T_{p_k} {\cal M}$, \eqref{eq:Ass2.r} holds for $L$-Lipschitz $ \nabla^2 \hat{f}_k$. Overall, it can be asserted that a sufficient condition for Assumption \ref{Ass2} to be satisfied is that $R$ and $ \nabla^2 \hat{f}_k$ are a second-order retraction and $L$-Lipschitz respectively.
\end{remark}



Under a reasonable assumption (Assumption 2, to be more specific), the next result guarantees that Algorithm 1 is well-defined, that is, the existence of $\alpha \in [0, + \infty)$ satisfying condition \eqref{eq:cond.mono.alg}.
\begin{theorem}
	\label{teo:good.def}
	Suppose that  Assumption~\ref{Ass2} holds. For every iteration $k$, if $\alpha\geq0$ satisfies
		\begin{equation}\label{eq:bmvmsig}
             2^{\alpha-1} \sigma_k \geq 12( 2\kappa_{g} + \kappa_{\B} ) + L
	\end{equation}
        then
	\begin{equation}\label{eq:lem2inc}
		\hat{f}_k(v_{k,\alpha} )
			 \leq  f(p_k) + \frac{ \sigma_k }{24} \| v_{k-1} \|^3
			- \frac{  2^{\alpha } \sigma_k }{24}\| v_{k,\alpha} \|^{3}.
	\end{equation}
\end{theorem}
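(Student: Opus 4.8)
The plan is to show that the stopping condition \eqref{eq:cond.mono.alg} is implied by the three hypotheses available to us at Step 1.3: the descent condition $m_{k,\alpha}(v_{k,\alpha})\leq f(p_k)$ from \eqref{eq:consubpalg}, the error bounds \eqref{eq:grahess.alg} on $g_{k,\alpha}$ and $\B_{k,\alpha}$, and the smoothness bound \eqref{eq:Ass2} on the pullback $\hat f_k$. The idea is to estimate $\hat f_k(v_{k,\alpha})$ from above by inserting the model $m_{k,\alpha}(v_{k,\alpha})$ and controlling the discrepancy between $\hat f_k$ and $m_{k,\alpha}$ entirely in terms of $\|v_{k,\alpha}\|$, $\|v_{k-1}\|$, and the constants $\kappa_g,\kappa_\B,L$.

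First I would write, for brevity, $v\coloneqq v_{k,\alpha}$ and $\beta\coloneqq 2^{\alpha-1}\sigma_k$, so the cubic coefficient in the model is $2^\alpha\sigma_k/6 = \beta/3$. The decomposition is
\[
\hat f_k(v) = \Big[\hat f_k(v) - f(p_k) - \langle \grad f(p_k),v\rangle - \tfrac12\langle \hess f(p_k)[v],v\rangle\Big]
+ \langle \grad f(p_k) - g_{k,\alpha}, v\rangle
+ \tfrac12\langle (\hess f(p_k) - \B_{k,\alpha})[v], v\rangle
+ m_{k,\alpha}(v) - \tfrac{\beta}{3}\|v\|^3 .
\]
Now bound each bracket: the first by $\tfrac{L}{6}\|v\|^3$ using Assumption~\ref{Ass2}; the second by $\|\grad f(p_k)-g_{k,\alpha}\|\,\|v\| \leq \tfrac{\kappa_g}{2^{\alpha-1}}\|v_{k-1}\|^2\|v\|$ via Cauchy--Schwarz and \eqref{eq:grahess.alg}; the third by $\tfrac12\|\hess f(p_k)-\B_{k,\alpha}\|_{\op}\|v\|^2 \leq \tfrac{\kappa_\B}{2^{\alpha}}\|v_{k-1}\|\,\|v\|^2$ similarly; and $m_{k,\alpha}(v)\leq f(p_k)$ by \eqref{eq:consubpalg}. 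Collecting these,
\[
\hat f_k(v) \leq f(p_k) + \frac{L}{6}\|v\|^3 + \frac{\kappa_g}{2^{\alpha-1}}\|v_{k-1}\|^2\|v\| + \frac{\kappa_\B}{2^{\alpha}}\|v_{k-1}\|\,\|v\|^2 - \frac{\beta}{3}\|v\|^3 .
\]

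The remaining task is purely algebraic: absorb the three "error" terms into the available budget $\tfrac{\sigma_k}{24}\|v_{k-1}\|^3 + \tfrac{\beta}{3}\|v\|^3 - \tfrac{2^\alpha\sigma_k}{24}\|v\|^3 = \tfrac{\sigma_k}{24}\|v_{k-1}\|^3 + \tfrac{\beta}{12}\|v\|^3$ (using $2^\alpha\sigma_k/24 = \beta/12$ and $\beta/3 - \beta/12 = \beta/4$; I will keep track of the exact split). Concretely I would use Young's inequality on the cross terms to convert each $\|v_{k-1}\|^a\|v\|^b$ (with $a+b=3$) into a convex combination of $\|v_{k-1}\|^3$ and $\|v\|^3$, e.g. $\|v_{k-1}\|^2\|v\|\leq \tfrac23\|v_{k-1}\|^3 + \tfrac13\|v\|^3$ and $\|v_{k-1}\|\,\|v\|^2\leq \tfrac13\|v_{k-1}\|^3 + \tfrac23\|v\|^3$. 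After this substitution every term is a multiple of $\|v_{k-1}\|^3$ or $\|v\|^3$, and the hypothesis \eqref{eq:bmvmsig}, which says $\beta = 2^{\alpha-1}\sigma_k \geq 12(2\kappa_g+\kappa_\B)+L$, together with $\beta\geq\sigma_1$ and the fact that $2^{\alpha-1}\sigma_k \geq \sigma_1$ is enforced in Step 1, should make the coefficient of $\|v\|^3$ nonpositive after moving $-\tfrac{2^\alpha\sigma_k}{24}\|v\|^3$ to the right, and the coefficient of $\|v_{k-1}\|^3$ at most $\tfrac{\sigma_k}{24}$. I expect the main obstacle to be bookkeeping the powers of two correctly so that the constant $12(2\kappa_g+\kappa_\B)+L$ in \eqref{eq:bmvmsig} comes out exactly — in particular checking that the $\tfrac{\kappa_\B}{2^\alpha}$ term (which carries an extra factor $1/2$ relative to $\kappa_g/2^{\alpha-1}$) still fits, and that one does not need $\sigma_k$ itself to be large, only $2^{\alpha-1}\sigma_k$. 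If the naive split is too lossy I would instead choose the Young's-inequality weights (rather than the symmetric $\tfrac13,\tfrac23$) to put as much mass as possible on $\|v\|^3$, which is the term we can afford to spend.
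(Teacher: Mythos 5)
Your proposal follows essentially the same route as the paper's proof: bound $\hat f_k(v_{k,\alpha})$ via Assumption~\ref{Ass2}, insert the model and use $m_{k,\alpha}(v_{k,\alpha})\leq f(p_k)$ together with Cauchy--Schwarz and \eqref{eq:grahess.alg}, then absorb the cross terms into cubes and invoke \eqref{eq:bmvmsig}; the paper simply uses the cruder split $\|v_{k-1}\|^a\|v_{k,\alpha}\|^b\leq\|v_{k-1}\|^3+\|v_{k,\alpha}\|^3$ in place of weighted Young's inequality, which already suffices (and note your displayed budget identity should read $\beta/3-\beta/12=\beta/4$, as your own parenthetical acknowledges). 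The final bookkeeping you defer does close exactly as you expect, so the argument is correct.
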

\begin{proof}	
Take an arbitrary iteration $k$ and a constant $\alpha\geq 0$ satisfying \eqref{eq:bmvmsig}.
 It follows from Assumption~\ref{Ass2} with $v=v_{k,\alpha}$,  the definition of $m_{k,\alpha}$ (given in \eqref{eq:model.f}), the first inequality of \eqref{eq:consubpalg}, and \eqref{eq:grahess.alg} that
		\begin{align*}
		\MoveEqLeft
		\hat{f}_k(v_{k,\alpha} ) \\
		& \leq   f(p_k)+ \langle\grad f(p_k),v_{k,\alpha} \rangle +\frac{1}{2}\langle\hess f(p_k)[v_{k,\alpha}],v_{k,\alpha}\rangle+\frac{L}{6}\| v_{k,\alpha} \|^{3}\nonumber \\
		& =  m_{k,\alpha}(v_{k,\alpha}) + \langle\grad f(p_k) - g_{k,\alpha},v_{k,\alpha} \rangle +\frac{1}{2} \left\langle  (\hess f(p_k)-{\B_{k,\alpha}})[v_{k,\alpha}],v_{k,\alpha} \right\rangle +\frac{L-2^{\alpha} \sigma_k}{6}\|v_{k,\alpha}\|^{3}\nonumber \\
			& \leq  f(p_k)+ \| \grad f(p_k) - g_{k,\alpha} \| \| v_{k,\alpha} \| + \frac{1}{2} \left\| \hess f(p_k)-{\B_{k,\alpha}} \right\|_{\op} \| v_{k,\alpha} \|^2 +\frac{L-2^{\alpha} \sigma_k}{6}\| v_{k,\alpha} \|^{3}\nonumber \\
			& \leq  f(p_k) + \frac{ \kappa_{g} }{ 2^{\alpha-1} }  \| v_{k-1} \|^2\| v_{k,\alpha} \| + \frac{ \kappa_{\B} }{ 2^{\alpha} }  \| v_{k-1} \| \| v_{k,\alpha} \|^2
			+\frac{L-2^{\alpha} \sigma_k}{6}\| v_{k,\alpha} \|^{3}. \nonumber
	\end{align*}
	Since
	$
	\| v_{k-1} \|^2\| v_{k,\alpha} \| \leq  \| v_{k-1} \|^3 + \| v_{k,\alpha} \|^3
	$
	and
	$
	\| v_{k-1} \|\| v_{k,\alpha} \|^2 \leq  \| v_{k-1} \|^3 + \| v_{k,\alpha} \|^3
	$, it follows that
	\begin{equation*}
		\hat{f}_k(v_{k,\alpha} )
			 \leq  f(p_k) + \frac{ 2\kappa_{g} +  \kappa_{\B} }{2^{\alpha}} \| v_{k-1} \|^3
			 +\frac{ 2 \kappa_{g} +   \kappa_{\B} }{ 2^{ \alpha} }\| v_{k,\alpha} \|^{3}
			+\frac{  L - 2^{\alpha} \sigma_k}{6}\| v_{k,\alpha} \|^{3}.
	\end{equation*}
	By means of \eqref{eq:bmvmsig}, we can ensure that $  (2\kappa_{g} + \kappa_{\B} )/ 2^{\alpha}  \leq \sigma_{k}/ 24 $ holds for every $\alpha \geq 0$. Thus, the previous inequality leads to the following
	\begin{equation*}
		\hat{f}_k(v_{k,\alpha} )
			 \leq  f(p_k) + \frac{ \sigma_k }{24} \| v_{k-1} \|^3
			+\frac{ \sigma_k + 4L  - 2^{\alpha+2} \sigma_k }{24}\| v_{k,\alpha} \|^{3}.
	\end{equation*}
	By using \eqref{eq:bmvmsig} again, one can easily conclude that
	$$
	 \sigma_k + 4L  - 2^{\alpha+2} \sigma_k  =  ( \sigma_k - 2^{\alpha } \sigma_k ) + (4L  - 2^{\alpha+1} \sigma_k)  - 2^{\alpha} \sigma_k
	 \leq   - 2^{\alpha} \sigma_k
	$$
	for all $\alpha \geq 0$. Finally, the previous inequality implies that \eqref{eq:lem2inc} is true.
	 \end{proof}
The next result shows that the sequence $\{\sigma_k\}$ of regularization parameters is bounded, and further provides lower and upper bounds for this sequence.

\begin{corollary}\label{cor:gooddef}
	Under  Assumption~\ref{Ass2}, the sequence of regularization parameters $\left\{\sigma_{k}\right\}$ in Algorithm 1 satisfies
	\begin{equation}\label{eq:bound.sigmak}
		\sigma_{1}\leq\sigma_{k}\leq  24 ( 2 \kappa_{g} +  \kappa_{\B} ) + 2L  + \sigma_{1}  \coloneqq \sigma_{\max}, \qquad k=1,2,\ldots
	\end{equation}
\end{corollary}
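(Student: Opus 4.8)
The plan is to prove Corollary~\ref{cor:gooddef} as a direct consequence of Theorem~\ref{teo:good.def} together with the structure of Step 1 of Algorithm 1. The lower bound $\sigma_1 \leq \sigma_k$ is essentially built into the algorithm: I would argue by induction on $k$. At $k=1$ we have $\sigma_1 \leq \sigma_1$ trivially. For the inductive step, recall that at iteration $k$ the algorithm finds the smallest integer $\alpha \geq 0$ with $2^{\alpha-1}\sigma_k \geq \sigma_1$, and ultimately sets $\sigma_{k+1} = 2^{\alpha_k - 1}\sigma_k$ with $\alpha_k \geq \alpha$. Since $\alpha_k \geq \alpha$ and $\alpha$ was chosen precisely so that $2^{\alpha - 1}\sigma_k \geq \sigma_1$, monotonicity in $\alpha_k$ gives $\sigma_{k+1} = 2^{\alpha_k - 1}\sigma_k \geq 2^{\alpha - 1}\sigma_k \geq \sigma_1$. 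This closes the induction for the left inequality.

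For the upper bound, the key point is to control how large $\alpha_k$ — equivalently $\sigma_{k+1}/\sigma_k$ — can get. I would split into cases on the final accepted value $\alpha_k$ at iteration $k$. If $\alpha_k = 0$, then $\sigma_{k+1} = 2^{-1}\sigma_k \leq \sigma_k$, so no growth occurs. If $\alpha_k \geq 1$, then $\alpha_k$ was reached only after $\alpha_k - 1$ was rejected, i.e. the acceptance test \eqref{eq:cond.mono.alg} failed for $\alpha = \alpha_k - 1$. By Theorem~\ref{teo:good.def}, if \eqref{eq:bmvmsig} held for $\alpha = \alpha_k - 1$, then \eqref{eq:lem2inc} — which is exactly the acceptance test \eqref{eq:cond.mono.alg} — would hold, contradicting rejection. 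Hence \eqref{eq:bmvmsig} must fail for $\alpha = \alpha_k - 1$, i.e. $2^{\alpha_k - 2}\sigma_k < 12(2\kappa_g + \kappa_{\B}) + L$. Multiplying by $4$ gives $\sigma_{k+1} = 2^{\alpha_k - 1}\sigma_k < 48(2\kappa_g + \kappa_{\B}) + 4L$. I should double-check that this is compatible with the claimed bound $24(2\kappa_g + \kappa_{\B}) + 2L + \sigma_1$; since the statement is an inequality, the coarser bound obtained here can simply be absorbed, though one may need to be slightly more careful — e.g. use that $\alpha \geq 1$ is only forced when $2^{-1}\sigma_k < \sigma_1$, or track the constant more tightly via $2^{\alpha_k - 2}\sigma_k \geq \sigma_1/2$ is not needed, and instead note the algorithm's Step 1 means $\alpha_k$ is the smallest index satisfying both $2^{\alpha-1}\sigma_k \geq \sigma_1$ and the acceptance test.

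To land exactly on $\sigma_{\max} = 24(2\kappa_g + \kappa_{\B}) + 2L + \sigma_1$, I would refine the case analysis: let $\alpha^\ast$ be the smallest integer with $2^{\alpha^\ast - 1}\sigma_k \geq \max\{\sigma_1,\ 12(2\kappa_g+\kappa_{\B}) + L\}$. Then by Step 1 and Theorem~\ref{teo:good.def}, $\alpha_k \leq \alpha^\ast$, so $\sigma_{k+1} = 2^{\alpha_k-1}\sigma_k \leq 2^{\alpha^\ast - 1}\sigma_k$. By minimality of $\alpha^\ast$, either $\alpha^\ast = 0$ — in which case $2^{\alpha^\ast-1}\sigma_k = \sigma_k/2 \leq \sigma_k$ — or $2^{\alpha^\ast - 2}\sigma_k < \max\{\sigma_1,\ 12(2\kappa_g+\kappa_{\B}) + L\}$, whence $2^{\alpha^\ast - 1}\sigma_k < 2\max\{\sigma_1,\ 12(2\kappa_g+\kappa_{\B})+L\} \leq 2\sigma_1 + 24(2\kappa_g+\kappa_{\B}) + 2L$. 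Combining with the inductive hypothesis that $\sigma_k \leq \sigma_{\max}$ (handling the $\sigma_{k+1}\leq\sigma_k$ case directly and the other case via the explicit bound, noting $2\sigma_1 \leq \sigma_1 + \sigma_{\max}$ is not quite what I want — so I'd instead verify $2\sigma_1 + 24(2\kappa_g+\kappa_{\B}) + 2L \leq \sigma_{\max}$ fails by a factor and the paper likely intends the constant-tracking to work out, so I will recheck the exponent bookkeeping in \eqref{eq:bmvmsig} versus the factor picked up). The main obstacle, then, is purely the constant bookkeeping: making sure the ``one rejected step'' bound $2^{\alpha_k-2}\sigma_k < 12(2\kappa_g+\kappa_{\B})+L$ multiplies up to something $\leq \sigma_{\max}$, and cleanly merging the ``no growth'' case $\alpha_k = 0$ with the inductive hypothesis. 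There is no conceptual difficulty beyond Theorem~\ref{teo:good.def}; it is a bounded-step-size argument of the standard adaptive-regularization type.
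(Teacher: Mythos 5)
Your overall strategy is exactly the paper's: induction on $k$, with the lower bound $\sigma_k\ge\sigma_1$ coming from the initialization of $\alpha$ in Step 1, and the upper bound coming from a ``one rejected step'' argument based on Theorem~\ref{teo:good.def}. The paper phrases the key step contrapositively (if $2^{\alpha_k-1}\sigma_k>\sigma_{\max}$ then $2^{\alpha_k-2}\sigma_k>\sigma_{\max}/2>12(2\kappa_g+\kappa_{\B})+L$, so $\alpha=\alpha_k-1$ would have been accepted, contradicting minimality), while you phrase it directly (rejection of $\alpha_k-1$ forces the failure of \eqref{eq:bmvmsig} at $\alpha_k-1$); these are the same argument.

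The problem is your constant bookkeeping, and it is what sends the rest of the write-up into an unresolved spiral. Since $2^{\alpha_k-1}=2\cdot 2^{\alpha_k-2}$, the rejection inequality $2^{\alpha_k-2}\sigma_k<12(2\kappa_g+\kappa_{\B})+L$ should be multiplied by $2$, not by $4$: it yields $\sigma_{k+1}=2^{\alpha_k-1}\sigma_k<24(2\kappa_g+\kappa_{\B})+2L\le\sigma_{\max}$ immediately, with no need for the auxiliary $\alpha^\ast$, the $\max\{\sigma_1,\,\cdot\,\}$ threshold, or the worry about whether $2\sigma_1$ fits under $\sigma_{\max}$. As written, your proof does not land on the stated bound: you obtain $48(2\kappa_g+\kappa_{\B})+4L$, correctly observe that this need not be $\le\sigma_{\max}$, and then leave the repair unfinished (``I will recheck the exponent bookkeeping''). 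One further small point to make the case split airtight: when $\alpha_k\ge1$ it is not automatic that $\alpha=\alpha_k-1$ was ever tested, since Step 1 may start the inner loop at $\alpha=1$ when $\sigma_k<2\sigma_1$; in that residual case $\sigma_{k+1}=2^{0}\sigma_k=\sigma_k$, which the inductive hypothesis covers. With the factor corrected and the cases organized as ``$\sigma_{k+1}\le\sigma_k$'' versus ``$\alpha_k-1$ was tested and rejected,'' your argument coincides with the paper's proof.
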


\begin{proof}
	Clearly, \eqref{eq:bound.sigmak} is true for $k =1$, and thus our induction base holds. Suppose that \eqref{eq:bound.sigmak} holds for some $k \geq 1$. If $\alpha_{k}=0$, then by Step 1 and the induction hypothesis, we have
	\begin{equation*}
		\sigma_{1}\leq\sigma_{k +1}= 2^{-1}\sigma_{k}\leq\sigma_{k}\leq \sigma_{\max},
	\end{equation*}
	that is, \eqref{eq:bound.sigmak} holds for $k+1$. On the other hand, if $\alpha_{k}\geq 1,$ then there is
	\begin{equation}\label{eq:ineq.ins.cor.gd}
		2^{\alpha_{k}-1}\sigma_{k} \leq \sigma_{\max}
	\end{equation}
	Indeed, by assuming that \eqref{eq:ineq.ins.cor.gd} is not true, it follows that
	$$
		2^{ \alpha_{k} - 2 }\sigma_{k} >  2^{-1} \sigma_{\max}> 12\left( 2\kappa_{g} + \kappa_{\B} \right) + L .
	$$
	In this case, by Theorem \ref{teo:good.def}, inequality \eqref{eq:cond.mono.alg} would have been satisfied for $\alpha=\alpha_{k}-1$, contradicting the minimality of $\alpha_{k}$. Thus, \eqref{eq:ineq.ins.cor.gd} is true. Consequently, it follows from Step 1 and \eqref{eq:ineq.ins.cor.gd} that
	\begin{equation*}
		\sigma_{1}\leq \sigma_{k+1}= 2^{\alpha_{k}-1}\sigma_{k} \leq \sigma_{\max},
	\end{equation*}
	that is, \eqref{eq:bound.sigmak} also holds for $k+1$ in this case. This completes the induction argument.
	
\end{proof}

\section{Worst-Case Iteration Complexity Analysis}

In this section, we provide first- and second-order analysis of Algorithm 1 in different subsections. The next result will support both analyses.

\begin{lemma}\label{lem:aux.conv.expc}
Let $N\geq 3$ be a natural number and define
\begin{equation}\label{eq:defkbar}
\bar{k}\coloneqq \argmin_{k \in \{ 1,\ldots,N - 2 \} }\left\{ \| v_{k} \|^3 + \|v_{k+1}\|^3 \right\}.
\end{equation}
If  Assumptions~\ref{Ass1} and \ref{Ass2} hold then
\begin{equation}\label{lem:aux.con.exp.ld-r}
	 \max\left\{ \| v_{ \bar{k}} \| , \| v_{\bar{k}+1}\| \right\} \leq
	  \left[\frac{48(f(p_{1})-f_{low})}{\sigma_{1}}+2 \|v_0\|^3 \right]^{\frac{1}{3}}\dfrac{1}{(N-2)^{\frac{1}{3}}}.
	\end{equation}
\end{lemma}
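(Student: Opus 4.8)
The plan is to convert the acceptance test \eqref{eq:cond.mono.alg} into a telescoping ``energy'' estimate for $\sum_k\|v_k\|^3$ and then feed the resulting bound into the elementary averaging Lemma~\ref{lem:ineq.num.aux}. \textbf{Step 1 (one-step decrease).} First I would record that, at every iteration $k\ge 1$, the accepted step $v_k=v_{k,\alpha_k}$ satisfies \eqref{eq:cond.mono.alg} with $\alpha=\alpha_k$; since $p_{k+1}=\R_{p_k}(v_k)$ and $\sigma_{k+1}=2^{\alpha_k-1}\sigma_k$ (so that $2^{\alpha_k}\sigma_k=2\sigma_{k+1}$), the test reads
\[
f(p_{k+1})\ \le\ f(p_k)+\frac{\sigma_k}{24}\|v_{k-1}\|^3-\frac{\sigma_{k+1}}{12}\|v_k\|^3,\qquad k=1,2,\ldots
\]

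\textbf{Step 2 (telescoping).} Next, fix $N\ge 3$ and sum this inequality over $k=1,\ldots,N-1$. The left side collapses to $f(p_N)-f(p_1)$, which is $\ge f_{low}-f(p_1)$ by Assumption~\ref{Ass1}. On the right side I would reindex $\sum_{k=1}^{N-1}\tfrac{\sigma_k}{24}\|v_{k-1}\|^3=\sum_{k=0}^{N-2}\tfrac{\sigma_{k+1}}{24}\|v_k\|^3$, split off the $k=0$ term $\tfrac{\sigma_1}{24}\|v_0\|^3$ and the $k=N-1$ term $\tfrac{\sigma_N}{12}\|v_{N-1}\|^3$, and note that for $1\le k\le N-2$ the net coefficient of $\|v_k\|^3$ is $\tfrac{\sigma_{k+1}}{12}-\tfrac{\sigma_{k+1}}{24}=\tfrac{\sigma_{k+1}}{24}>0$. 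Rearranging gives
\[
\frac{\sigma_N}{12}\|v_{N-1}\|^3+\sum_{k=1}^{N-2}\frac{\sigma_{k+1}}{24}\|v_k\|^3\ \le\ f(p_1)-f_{low}+\frac{\sigma_1}{24}\|v_0\|^3 .
\]
Using $\sigma_k\ge\sigma_1$ from Corollary~\ref{cor:gooddef} (and $\tfrac{\sigma_N}{12}\ge\tfrac{\sigma_1}{24}$) to replace every coefficient on the left by $\tfrac{\sigma_1}{24}$, and then multiplying by $24/\sigma_1$, one obtains $\sum_{k=1}^{N-1}\|v_k\|^3\le 24(f(p_1)-f_{low})/\sigma_1+\|v_0\|^3$.

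\textbf{Step 3 (averaging).} Finally I apply Lemma~\ref{lem:ineq.num.aux} with $a=3$, $\bar N=N-1$ (which is $\ge 2$ since $N\ge 3$), $z_k=\|v_k\|$ for $k=1,\ldots,N-1$, and $b=24(f(p_1)-f_{low})/\sigma_1+\|v_0\|^3$. The minimizing index produced by that lemma is exactly the $\bar k$ of \eqref{eq:defkbar}, and the conclusion $\max\{\|v_{\bar k}\|,\|v_{\bar k+1}\|\}\le(2b/(N-2))^{1/3}$ is precisely \eqref{lem:aux.con.exp.ld-r} after expanding $b$.

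The only delicate point is the bookkeeping in Step~2: keeping track of the two boundary contributions (the $\|v_0\|^3$ carried in by the shifted first sum and the extra $\|v_{N-1}\|^3$ left at the top end), verifying that the surviving interior coefficients remain positive so that the crude bound $\sigma_k\ge\sigma_1$ may legitimately be applied, and maintaining $N\ge 3$ so that Lemma~\ref{lem:ineq.num.aux} applies with $\bar N=N-1\ge 2$. Everything else is routine algebra.
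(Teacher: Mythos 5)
Your proposal is correct and follows essentially the same route as the paper: both convert the acceptance test \eqref{eq:cond.mono.alg} into the telescoping bound $\sum_{k=1}^{N-1}\|v_k\|^3\le 24(f(p_1)-f_{low})/\sigma_1+\|v_0\|^3$ (the paper keeps the telescoping term $\sigma_{k+1}\|v_k\|^3-\sigma_k\|v_{k-1}\|^3$ inside the sum rather than reindexing, but this is the same computation) and then invoke Lemma~\ref{lem:ineq.num.aux} with $z_k=\|v_k\|$, $a=3$, $\bar N=N-1$. Your bookkeeping of the boundary terms and the use of Corollary~\ref{cor:gooddef} to bound $\sigma_k\ge\sigma_1$ match the paper's argument.
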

\begin{proof}
	Consider \eqref{eq:cond.mono.alg} with $\alpha=\alpha_k$. Since $v_{k,\alpha_k}=v_k$,
       $p_{k+1}=\R_{p_k}(v_k)$, $2 \sigma_{k+1}=2^{\alpha_{k}}\sigma_{k}$ and, by Corollary~\ref{cor:gooddef}, $\sigma_{k}\geq\sigma_{1}$ for all $k\geq 1$, we can conclude that
	\begin{align*}
		       f(p_{k})-f(p_{k+1})  & \geq \frac{2\sigma_{k+1}}{24} \| v_{k} \|^3 - \frac{\sigma_{k}}{24} \| v_{k-1} \|^3   \\
	& =   \frac{\sigma_{k+1}}{24} \| v_{k} \|^3  + \frac{ 1 }{24} \left( \sigma_{k+1} \| v_{k} \|^3 - \sigma_{k} \| v_{k-1} \|^3 \right) \\
	& \geq  \frac{\sigma_{1}}{24} \| v_{k} \|^3  + \frac{ 1 }{24} \left( \sigma_{k+1} \| v_{k} \|^3 - \sigma_{k} \| v_{k-1} \|^3 \right),
	\end{align*}
         for all $k=1,\ldots, N-1$. Summing over $k=1, \ldots, N-1$ and using Assumption~\ref{Ass1}, we get
	\begin{align*}
		f(p_{1})-f_{low}&\geq   \sum_{k=1}^{N-1} f(p_{k})-f(p_{k+1})\\
		&\geq \frac{\sigma_{1}}{24}  \sum_{k=1}^{N-1} \| v_{k} \|^3  + \dfrac{ 1 }{24} \sum_{k=1}^{N-1} \left( \sigma_{k+1} \| v_{k} \|^3 - \sigma_{k} \| v_{k-1} \|^3 \right) \\
	        &\geq  \frac{\sigma_{1}}{24} \sum_{k=1}^{N-1}  \| v_{k} \|^3   - \frac{ \sigma_{1} }{24}  \| v_{0} \|^3,
	\end{align*}
	which is equivalent to
	\begin{equation}\label{eq:sumileexc-r}
		\sum_{k=1}^{N-1} \|v_k\|^3 \leq \frac{24(f(p_{1})-f_{low})}{\sigma_{1}}+ \|v_{0} \|^3.
	\end{equation}
	Thus, \eqref{lem:aux.con.exp.ld-r} follows from \eqref{eq:sumileexc-r} and Lemma \ref{lem:ineq.num.aux} with $z_{k}= \|v_k\|$, $\bar{N}=N-1$ and $a=3$.
\end{proof}


\subsection{First-order analysis}

For our first-order analysis, we need to impose the following additional assumption.

\begin{assumption}\label{Ass3}
Let $f$ be a twice continuously differentiable function, and let ${\cal G} \colon \T{\cal M} \to \T{\cal M}$ be a mapping whose image of ${\cal G}(p, \cdot ) \colon \T_p{\cal M} \to \T{\cal M}$ belongs to $\T_p{\cal M}$ for all $p\in {\cal M}$. Suppose that there exists a constant $L'$ such that, at each iteration $k$, the inequality
\begin{equation}\label{eq:Ass3-lc}
\left\| {\cal G}(p_k,v) - \grad f(p_k) - \hess f(p_k)[v]  \right\| \leq\dfrac{L'}{2}\|v\|^{2} \,\, \mbox{ holds for all } \, v\in \T_{p_k} {\cal M}.
\end{equation}
\end{assumption}

\begin{remark}\label{rem:Ass3}
Due to Lemma~\ref{lem:ineq.hess.lips}, we can assert  that if $\hess f$ is $L'$-Lipshitz then Assumption~\ref{Ass3} holds for ${\cal G} \colon \T{\cal M} \to \T{\cal M}$ defined by ${\cal G}( p ,v) = \p_{v}^{-1} \grad f(\exp_{ p }v)$ for all $(p,v)\in \T\cM$. Furthermore, assume that $\R$ is an arbitrary second-order retraction (not necessarily equal to the exponential map). Hence, it follows from  \eqref{eq:rel.betw.f.hf} and \eqref{eq:rel.betw.f.hf-hess} that \eqref{eq:Ass3-lc} can be rewritten as
\begin{equation*}
\left\| {\cal G}(p_k,v) - \nabla \hat{f}_k(0) - \nabla^2 \hat{f}_k(0)[v]  \right\| \leq\dfrac{L'}{2}\|v\|^{2} \,\, \mbox{ for all } \, v\in \T_{p_k} {\cal M},
\end{equation*}
which ensures that if $\nabla^2 \hat{f}_k$ is $L'$-Lipschitz then Assumption~\ref{Ass3} is satisfied for ${\cal G} \colon \T{\cal M} \to \T{\cal M}$ defined by ${\cal G}( p ,v) = \nabla ( f \circ \R_p)(v)$ for all $(p,v)\in \T\cM$.
\end{remark}

The following lemma provides an important inequality for the next two theorems.

\begin{lemma}\label{lem:bound.calG}
	Suppose that $f$ and ${\cal G}$ satisfy Assumption~\ref{Ass3}. Under Assumption \ref{Ass2}, for every iteration $k$ one has
	\begin{equation}\label{eq:tegdgbgt}
		\| {\cal G}(p_k,v_{k}) \|\leq \tau \max\left\{ \| v_{k-1} \|  , \| v_{k} \| \right\}^{2},
		\qquad \tau \coloneqq \frac{L' + 2(\theta+\sigma_{\max} ) + 4( \kappa_g + \kappa_{\B}) }{2},
	\end{equation}
	where $\sigma_{\max}$ is defined in \eqref{eq:bound.sigmak}.
\end{lemma}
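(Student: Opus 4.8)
The plan is to bound $\|\mathcal{G}(p_k, v_k)\|$ by comparing $\mathcal{G}(p_k, v_k)$ to $\nabla m_{k,\alpha_k}(v_k)$, which we control through the subproblem stopping criterion \eqref{eq:consubpalg}, and then correcting for the discrepancies between $g_{k,\alpha_k}, \B_{k,\alpha_k}$ and the true $\grad f(p_k), \hess f(p_k)$ via \eqref{eq:grahess.alg}, and the discrepancy between $\hess f(p_k)[v_k]$ and the nonlinear term captured by Assumption~\ref{Ass3}. Throughout I abbreviate $\alpha_k$ by $\alpha$ and write $g := g_{k,\alpha_k}$, $\B := \B_{k,\alpha_k}$, $v := v_k$, and recall $\sigma_{k+1} = 2^{\alpha_k - 1}\sigma_k$, so $2^{\alpha_k}\sigma_k = 2\sigma_{k+1} \le 2\sigma_{\max}$ and $2^{\alpha_k - 1} = \sigma_{k+1}/\sigma_k \le \sigma_{\max}/\sigma_1$ — actually the cleaner route is to keep $2^{\alpha_k}\sigma_k \le 2\sigma_{\max}$ and $\kappa_g/2^{\alpha_k - 1} \le 2\kappa_g$, $\kappa_{\B}/2^{\alpha_k - 1} \le 2\kappa_{\B}$ directly from $\alpha_k \ge 0$.

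First I would write the triangle-inequality decomposition
\[
\|\mathcal{G}(p_k, v)\| \le \underbrace{\|\mathcal{G}(p_k, v) - \grad f(p_k) - \hess f(p_k)[v]\|}_{\text{(I)}} + \underbrace{\|\grad f(p_k) - g\|}_{\text{(II)}} + \underbrace{\|(\hess f(p_k) - \B)[v]\|}_{\text{(III)}} + \underbrace{\|g + \B[v]\|}_{\text{(IV)}}.
\]
Term (I) is $\le \tfrac{L'}{2}\|v\|^2$ by Assumption~\ref{Ass3}. Term (II) is $\le \tfrac{\kappa_g}{2^{\alpha_k-1}}\|v_{k-1}\|^2 \le 2\kappa_g\|v_{k-1}\|^2$ by the first inequality in \eqref{eq:grahess.alg}. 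Term (III) is $\le \|\hess f(p_k) - \B\|_{\op}\|v\| \le \tfrac{\kappa_{\B}}{2^{\alpha_k-1}}\|v_{k-1}\|\,\|v\| \le 2\kappa_{\B}\|v_{k-1}\|\,\|v\|$ by the second inequality in \eqref{eq:grahess.alg}. For term (IV), observe that $\nabla m_{k,\alpha_k}(v) = g + \B[v] + \tfrac{2^{\alpha_k}\sigma_k}{2}\|v\|\,v$, so $g + \B[v] = \nabla m_{k,\alpha_k}(v) - \tfrac{2^{\alpha_k}\sigma_k}{2}\|v\|\,v$, hence $\|g + \B[v]\| \le \|\nabla m_{k,\alpha_k}(v)\| + \tfrac{2^{\alpha_k}\sigma_k}{2}\|v\|^2 \le \theta\|v\|^2 + \sigma_{\max}\|v\|^2$, using \eqref{eq:consubpalg} and $2^{\alpha_k}\sigma_k \le 2\sigma_{\max}$ (from Corollary~\ref{cor:gooddef}).

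Next I would combine these four bounds. Using $\|v_{k-1}\|\,\|v\| \le \max\{\|v_{k-1}\|, \|v\|\}^2$ and $\|v_{k-1}\|^2 \le \max\{\|v_{k-1}\|, \|v\|\}^2$ and $\|v\|^2 \le \max\{\|v_{k-1}\|, \|v\|\}^2$, every term is at most a constant times $M^2$ where $M := \max\{\|v_{k-1}\|, \|v_k\|\}$, giving
\[
\|\mathcal{G}(p_k, v_k)\| \le \left(\tfrac{L'}{2} + 2\kappa_g + 2\kappa_{\B} + \theta + \sigma_{\max}\right) M^2 = \tfrac{L' + 4\kappa_g + 4\kappa_{\B} + 2\theta + 2\sigma_{\max}}{2}\, M^2 = \tau M^2,
\]
which is exactly \eqref{eq:tegdgbgt}. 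I do not anticipate a genuine obstacle here; the only point requiring a little care is being consistent about the factor $2^{\alpha_k - 1} \ge 1$ versus $2^{\alpha_k}\sigma_k$ when reading off the constants, and making sure the definition of $\tau$ in the statement matches the constants produced — in particular that $2\kappa_g + 2\kappa_{\B} = 2(\kappa_g + \kappa_{\B})$ and $\theta + \sigma_{\max}$ contribute $2(\theta + \sigma_{\max})$ after pulling out the $\tfrac12$, which indeed reproduces $\tau = \tfrac{L' + 2(\theta + \sigma_{\max}) + 4(\kappa_g + \kappa_{\B})}{2}$ as written.
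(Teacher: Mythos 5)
Your proof is correct and follows essentially the same route as the paper's: both isolate $\mathcal{G}(p_k,v_k)$ from $\nabla m_{k,\alpha_k}(v_k)$, control the residual via the stopping criterion $\|\nabla m_{k,\alpha_k}(v_k)\|\le\theta\|v_k\|^2$, and absorb the gradient/Hessian inexactness and the regularization term using \eqref{eq:grahess.alg}, $2^{\alpha_k-1}\ge 1/2$, and Corollary~\ref{cor:gooddef}. The only difference is cosmetic (you apply a forward triangle inequality to a four-term decomposition, while the paper uses a reverse triangle inequality and rearranges), and your constant bookkeeping reproduces $\tau$ exactly.
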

\begin{proof}
It follows from \eqref{eq:model.f} with $\alpha=\alpha_k$ and  $\sigma_{k+1} = 2^{\alpha_k-1} \sigma_k $  that
	\begin{align*}
		\nabla m_{k,\alpha_k}(v_{k} ) & =  g_{k,\alpha_k} +\B_{k,\alpha_k} [v_{k} ]+2^{\alpha_k-1} \sigma_k \|v_{k} \|v_{k}  \\
		 & =  {\cal G}(p_k,v_{k})
		  - \left(  {\cal G}(p_k,v_{k})  - \grad f(p_{k} ) - \hess f(p_k)[v_{k} ] \right)\\
		 & \quad + \left( g_{k,\alpha_k} - \grad f(p_{k}) \right)	+ \left(  \B_{k,\alpha_k} - \hess f(p_k)  \right) [v_{k} ]+  \sigma_{k+1}    \|v_{k} \|v_{k}.
	\end{align*}
	Taking norms on both sides and also using the second inequality of \eqref{eq:consubpalg} with $\alpha=\alpha_k$, we find by triangle inequality that
	\begin{align*}		
	\theta\| v_{k}  \|^2
		 \geq  \left\| \nabla m_{ k , \alpha_k }(v_{k} ) \right\|
		 &\geq  \left\| {\cal G}(p_k,v_{k}) \right\|
		 - \left\| {\cal G}(p_k,v_{k}) -\grad f(p_k) - \hess f(p_k)[v_{k} ]  \right\|  \\
		&- \left\| g_{k,\alpha_k} - \grad f(p_{k})  \right\| - \left\| (\B_{k,\alpha_k} -\hess f(p_k))[v_{k} ]\right\|  -  \sigma_{k+1} \|v_{k} \|^2.
	\end{align*}
        Rearranging, using  \eqref{eq:Ass3-lc} with $v=v_{k}$ and \eqref{eq:grahess.alg} with $\alpha=\alpha_k$, we get
	\begin{align*}
		\|  {\cal G}(p_k,v_{k})  \|
		 &\leq    \frac{L' + 2(\theta + \sigma_{k+1} )}{2} \| v_{k} \|^2 + \frac{ \kappa_{g} }{2^{\alpha_k - 1} } \| v_{k-1} \|^2 + \left\| \B_{k,\alpha_k} -\hess f(p_k) \right\|_{\op} \|v_{k}\|  \\
		&\leq   \frac{L' + 2(\theta+\sigma_{k+1})}{2} \| v_{k} \|^2 + \frac{ \kappa_{g} }{2^{\alpha_k - 1}} \| v_{k-1} \|^2 +   \frac{ \kappa_{\B} }{ 2^{\alpha_k - 1} } \| v_{k-1} \| \|v_{k}\|  \\	
		&\leq   \frac{L' + 2(\theta+\sigma_{k+1})}{2} \| v_{k} \|^2 + 2 \kappa_{g}  \| v_{k-1} \|^2 +   2 \kappa_{\B}  \| v_{k-1} \| \|v_{k}\|  \\
		&\leq
		\frac{L' + 2(\theta+\sigma_{k+1}) +  4( \kappa_g + \kappa_{\B}) }{2}
		\max\left\{  \| v_{k-1} \| , \|v_{k}\|   \right\}^2.
	\end{align*}
	Therefore, the proof conclusion follows from the previous inequality together with Corollary \ref{cor:gooddef}.
	\end{proof}

Now we can establish our first complexity result for Algorithm 1. Here, we are concerned only with the particular case where the retraction chosen is the exponential application, i.e., $\R = \exp$, and Assumption~\ref{Ass3} for ${\cal G} \colon \T{\cal M} \to \T{\cal M}$ defined by
	\begin{equation}\label{eq:cG-exp}
	{\cal G}(p,v) = \p_{v}^{-1} \grad f(\exp_{p}v) \,\, \mbox{ for all } \, (p,v) \in \T {\cal M}.
	\end{equation}

\begin{theorem}
	\label{thm:main.first-order}
	Let $\R= \exp$. Under Assumption~\ref{Ass1}, \ref{Ass2}, and \ref{Ass3} with ${\cal G}$ given in \eqref{eq:cG-exp}, let $p_0, p_1,p_2,\ldots$ be the iterates produced by Algorithm 1. For arbitrary $\epsilon > 0$, if  $\|\grad f(p_{k})\|>\epsilon$ for all  $ k \in \{1,\ldots,N\}$ then
	\begin{equation}\label{eq:Nthemaexp.fo}
		 N \leq 2 +  \left[\frac{48(f(p_{1})-f_{low})}{\sigma_{1}}+2 \|v_0\|^3 \right] \left( \frac{\epsilon}{\tau} \right)^{-\frac{3}{2}}
	\end{equation}
	where $\tau$ is defined in \eqref{eq:tegdgbgt}.
\end{theorem}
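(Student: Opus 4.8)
The plan is to combine the key estimate $\|\grad f(p_k)\| \le \tau \max\{\|v_{k-1}\|, \|v_k\|\}^2$ coming from Lemma~\ref{lem:bound.calG} (since for $\R=\exp$ and $\cal G$ as in \eqref{eq:cG-exp} we have $\mathcal G(p_k,v_k) = \p_{v_k}^{-1}\grad f(\exp_{p_k} v_k) = \p_{v_k}^{-1}\grad f(p_{k+1})$, whose norm equals $\|\grad f(p_{k+1})\|$ because parallel transport is an isometry) with the decay estimate on $\max\{\|v_{\bar k}\|,\|v_{\bar k+1}\|\}$ from Lemma~\ref{lem:aux.conv.expc}. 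First I would argue by contradiction: assume $N \ge 3$ and that $N$ exceeds the claimed bound \eqref{eq:Nthemaexp.fo}, and assume $\|\grad f(p_k)\| > \epsilon$ for all $k \in \{1,\ldots,N\}$. (For $N \le 2$ the bound \eqref{eq:Nthemaexp.fo} is trivially true, so that case is dispatched immediately.)

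Next I would apply Lemma~\ref{lem:aux.conv.expc}, which applies since Assumptions~\ref{Ass1} and~\ref{Ass2} hold, to produce an index $\bar k \in \{1,\ldots,N-2\}$ with
\begin{equation*}
\max\left\{\|v_{\bar k}\|, \|v_{\bar k+1}\|\right\} \le \left[\frac{48(f(p_1)-f_{low})}{\sigma_1} + 2\|v_0\|^3\right]^{1/3} \frac{1}{(N-2)^{1/3}}.
\end{equation*}
Now I invoke Lemma~\ref{lem:bound.calG} at iteration $k = \bar k + 1$ (note $\bar k + 1 \le N-1$, so $v_{\bar k+1}$ is a genuine iterate and the lemma applies): this gives $\|\grad f(p_{\bar k+2})\| = \|\mathcal G(p_{\bar k+1}, v_{\bar k+1})\| \le \tau \max\{\|v_{\bar k}\|, \|v_{\bar k+1}\|\}^2$. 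Since $\bar k + 2 \le N$, the hypothesis yields $\epsilon < \|\grad f(p_{\bar k+2})\| \le \tau \max\{\|v_{\bar k}\|,\|v_{\bar k+1}\|\}^2$, hence $\max\{\|v_{\bar k}\|,\|v_{\bar k+1}\|\} > (\epsilon/\tau)^{1/2}$.

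Combining the two displayed inequalities gives
\begin{equation*}
\left(\frac{\epsilon}{\tau}\right)^{1/2} < \left[\frac{48(f(p_1)-f_{low})}{\sigma_1} + 2\|v_0\|^3\right]^{1/3}\frac{1}{(N-2)^{1/3}},
\end{equation*}
and cubing and rearranging yields $N - 2 < \left[\frac{48(f(p_1)-f_{low})}{\sigma_1}+2\|v_0\|^3\right](\epsilon/\tau)^{-3/2}$, which contradicts the standing assumption that $N$ violates \eqref{eq:Nthemaexp.fo}. The main obstacle — really the only subtle point — is bookkeeping the index shifts: one must check that $\bar k + 1$ is a legitimate iteration index for applying Lemma~\ref{lem:bound.calG} and that $\bar k + 2 \le N$ so that the gradient lower bound hypothesis is available at $p_{\bar k+2}$; the choice $N \ge 3$ and $\bar k \le N-2$ makes both work. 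Everything else is a direct chain of the two cited lemmas plus the isometry property of parallel transport.
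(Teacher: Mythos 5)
Your proposal is correct and follows essentially the same route as the paper's proof: both combine Lemma~\ref{lem:bound.calG} at $k=\bar{k}+1$ (using that parallel transport is an isometry so $\|{\cal G}(p_{\bar{k}+1},v_{\bar{k}+1})\|=\|\grad f(p_{\bar{k}+2})\|$) with the decay bound of Lemma~\ref{lem:aux.conv.expc}, and your index bookkeeping ($\bar{k}\le N-2$ so $\bar{k}+2\le N$) matches the paper's. The only cosmetic difference is that you phrase the final step as a contradiction while the paper rearranges directly; the content is identical.
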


\begin{proof}
Inequality \eqref{eq:Nthemaexp.fo} is trivially satisfied for $N=1$ and $N=2$. Then assume $N\geq 3$. Taking into account the hypothesis of this theorem and the fact that
$\bar{k}$ defined in \eqref{eq:defkbar} belongs to $\{1,\ldots,N-2\}$, we conclude that $\|\grad f(p_{\bar{k}+2})\|>\epsilon$. Hence, since $\p_{v_{ \bar{k} + 1 }}^{-1}$ is an isometry,  Lemma~\ref{lem:bound.calG} with
 $k=\bar{k} + 1$ and ${\cal G}$ as in \eqref{eq:cG-exp} yields
	$$
	\epsilon  <  \|\grad f(p_{\bar{k}+2} )\|
	              = \| \p_{v_{\bar{k}+1}}^{-1} \grad f(\exp_{p_{\bar{k}+1}}v_{\bar{k}+1}) \| = \| {\cal G}(p_{\bar{k}+1} , v_{\bar{k}+1 }) \| \leq \tau \max\left\{ \| v_{ \bar{k} } \|  , \| v_{ \bar{k} +1 } \| \right\}^{2},
	$$
  with $\tau$ given in \eqref{eq:tegdgbgt}. By using this inequality together with Lemma~\ref{lem:aux.conv.expc} we find
  $$
 \epsilon <  \tau \left[\frac{48(f(p_{1})-f_{low})}{\sigma_{1}}+2 \|v_0\|^3 \right]^{\frac{2}{3}}\dfrac{1}{(N-2)^{\frac{2}{3}}}.
 $$
	After rearranging the terms of this inequality, the proof will be complete.
\end{proof}

The proof of the next result follows directly from the previous theorem and  the fact that Assumption~\ref{Ass2} and \ref{Ass3} are satisfied for $\R=\exp$, $L' = L$ and ${\cal G}$ given in \eqref{eq:cG-exp} for $L$-Lipschitz $\hess f$.

\begin{corollary}\label{cor:poexp}
Under Assumption~\ref{Ass1}, let $p_0, p_1,p_2,\ldots$ be the iterates produced by Algorithm 1 with $\R= \exp$. For arbitrary $\epsilon > 0$, with  L-Lipschitz $\hess f$ $\|\grad f(p_{k})\| \leq \epsilon$ for all
	$$
	 k > 2 +  \left[\frac{48(f(p_{1})-f_{low})}{\sigma_{1}}+2 \|v_0\|^3 \right] \left( \frac{\epsilon}{\tau} \right)^{-\frac{3}{2}},
	$$
	where $\tau$ is considered with $L'=L$ in \eqref{eq:tegdgbgt}. In particular, $\lim_{k \to \infty } \| \grad f (p_k) \| =0 $.
\end{corollary}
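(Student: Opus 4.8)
The plan is to derive Corollary~\ref{cor:poexp} as an immediate consequence of Theorem~\ref{thm:main.first-order} together with Remark~\ref{rem:Ass2} and Remark~\ref{rem:Ass3}. First I would observe that when $\R=\exp$, the exponential map is a second-order retraction, so that $\nabla^2 \hat{f}_k$ being $L$-Lipschitz implies Assumption~\ref{Ass2} holds with the constant $L$; in fact, by Lemma~\ref{lem:ineq.hess.lips}, the $L$-Lipschitz continuity of $\hess f$ is exactly what is needed, since for $\R=\exp$ the pullback $\hat{f}_k = f \circ \exp_{p_k}$ satisfies the inequality in Assumption~\ref{Ass2} directly from the first inequality of Lemma~\ref{lem:ineq.hess.lips}. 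Similarly, Remark~\ref{rem:Ass3} tells us that $L$-Lipschitz $\hess f$ makes Assumption~\ref{Ass3} hold with $L'=L$ for the map ${\cal G}(p,v)=\p_v^{-1}\grad f(\exp_p v)$, which is precisely the ${\cal G}$ appearing in \eqref{eq:cG-exp}. Hence all three hypotheses of Theorem~\ref{thm:main.first-order} are verified under the single assumption that $\hess f$ is $L$-Lipschitz.

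Next I would invoke Theorem~\ref{thm:main.first-order} in contrapositive form. The theorem states that if $\|\grad f(p_k)\| > \epsilon$ for all $k \in \{1,\ldots,N\}$, then $N$ is bounded by the right-hand side of \eqref{eq:Nthemaexp.fo}. Therefore, for any index
$$
k > 2 + \left[\frac{48(f(p_{1})-f_{low})}{\sigma_{1}}+2 \|v_0\|^3 \right] \left( \frac{\epsilon}{\tau} \right)^{-\frac{3}{2}},
$$
it cannot be the case that the gradient norm exceeds $\epsilon$ at every iteration up through $k$; but since the threshold is the same for all such $k$, a short argument shows that in fact $\|\grad f(p_k)\| \le \epsilon$ for every such $k$ individually. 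Concretely, if $\|\grad f(p_{k_0})\| > \epsilon$ for some particular $k_0$ exceeding the bound, one does not immediately get a contradiction, so the cleanest route is to argue that the set of iterations where $\|\grad f(p_k)\| > \epsilon$ is finite: if it were infinite, pick $N$ in that set larger than the bound and note that, while not all earlier iterates need violate the bound, one can restrict attention to the tail — more carefully, since the estimate in Lemma~\ref{lem:aux.conv.expc} actually gives $\|v_k\| \to 0$, and Lemma~\ref{lem:bound.calG} bounds $\|{\cal G}(p_k,v_k)\|$ by $\tau \max\{\|v_{k-1}\|,\|v_k\|\}^2$, we obtain $\|\grad f(p_{k+1})\| = \|{\cal G}(p_k,v_k)\| \to 0$ directly, which yields both the "eventually $\le \epsilon$" conclusion and the final claim $\lim_{k\to\infty}\|\grad f(p_k)\| = 0$.

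In fact I would organize the whole proof around that last observation, since it is cleaner than repeatedly applying the finite-$N$ bound: from \eqref{eq:sumileexc-r} in the proof of Lemma~\ref{lem:aux.conv.expc}, the series $\sum_k \|v_k\|^3$ converges, hence $\|v_k\| \to 0$; then Lemma~\ref{lem:bound.calG} with ${\cal G}$ as in \eqref{eq:cG-exp} gives $\|\grad f(p_{k+1})\| = \|\p_{v_k}^{-1}\grad f(\exp_{p_k} v_k)\| \le \tau \max\{\|v_{k-1}\|,\|v_k\|\}^2 \to 0$, establishing $\lim_{k\to\infty}\|\grad f(p_k)\| = 0$. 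The explicit iteration bound then follows by contraposition of Theorem~\ref{thm:main.first-order}: choosing $N = k$, if $\|\grad f(p_j)\| > \epsilon$ held for all $j \le k$ then \eqref{eq:Nthemaexp.fo} would force $k$ below the stated threshold, contradicting $k$ exceeding it; since this holds for every $k$ above the threshold and the gradient norms converge to zero, one concludes $\|\grad f(p_k)\| \le \epsilon$ for all such $k$. The main (minor) obstacle is the logical bookkeeping: Theorem~\ref{thm:main.first-order} is a statement about a run of consecutive iterations all violating the bound, not about a single iteration, so one must be slightly careful to convert it into a per-iteration guarantee — and the convergence statement $\|v_k\|\to 0$ is what makes this conversion legitimate. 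There are no hard estimates here; it is purely a matter of assembling earlier results correctly.
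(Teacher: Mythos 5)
Your proposal takes essentially the same route as the paper, whose entire proof is the one-line observation that the corollary ``follows directly'' from Theorem~\ref{thm:main.first-order} once one notes that an $L$-Lipschitz $\hess f$ yields Assumptions~\ref{Ass2} and~\ref{Ass3} with $L'=L$ and ${\cal G}$ as in \eqref{eq:cG-exp} (via Lemma~\ref{lem:ineq.hess.lips}); your verification of the hypotheses and the contrapositive application of the theorem match this exactly. The extra material you add --- summability of $\|v_k\|^3$ from \eqref{eq:sumileexc-r} giving $\|v_k\|\to 0$, hence $\|\grad f(p_{k+1})\|=\|{\cal G}(p_k,v_k)\|\le\tau\max\{\|v_{k-1}\|,\|v_k\|\}^2\to 0$ --- is not in the paper but is the right way to justify the ``in particular'' clause, and your observation that Theorem~\ref{thm:main.first-order} only controls a run of consecutive violating iterations (so that the literal ``for all $k$ beyond the threshold'' phrasing needs more than a bare contrapositive) identifies a real looseness that the paper's one-line proof glosses over rather than resolves.
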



Our second complexity result will be proposed for a general retraction $\R$ whose proof requires the use of an additional assumption that relates $\nabla \hat{f}_k$ and $\grad f$ for each iteration $k$. This assumption is detailed below.

\begin{assumption}\label{Ass4}
	 There exist constants $a\in(0,+\infty]$ and $b\in(0,1]$ such that	
            $\|v\|\leq a $,  $v\in \T_{p_k} {\cal M}$, implies $ \|   \nabla \hat{f}_k ( v ) \| \geq b \| \grad f(\R_{p_k}(v)) \|$.
	\end{assumption}
	
	\begin{remark}
	For the first-order analysis of a general retraction  the following assumption was used in \cite{agarwal2021adaptive}:
	\begin{itemize}
	\item[ \textbf{(A)} ]
	There exist constants $a\in(0,+\infty]$ and $b\in(0,1]$ such that	
            $\|v\|\leq a $,  $v\in \T_{p_k} {\cal M}$, implies  $ \varsigma_{\min}( \DR_{p_k}(v) ) \geq b$, where $\varsigma_{\min}$ extracts the smallest singular value of an operator.
	\end{itemize}
	Since the calculations in \cite[Sect. 4]{agarwal2021adaptive}  guarantee the inequality
	 \begin{equation*}\label{eq:relationReE}
	\|   \nabla \hat{f}_k ( v ) \| \geq \varsigma_{\min} (\DR_{p_k}(v)) \| \grad f(\R_{p_k}(v)) \|
	\end{equation*}
        for all $v\in \T_{p_k} {\cal M}$, it is easy to show that if \textbf{(A)} holds then Assumption~\ref{Ass4} also holds. In view of this, we can state that
       \cite[Sect. 7]{agarwal2021adaptive} secures Assumption~\ref{Ass4} for a large family of manifolds and retractions.
	\end{remark}

We now prepare to present our first-order complexity result for a general retraction $\R$. Here, we consider the function ${\cal G} \colon \T{\cal M} \to \T{\cal M}$ defined by
       \begin{equation}\label{eq:cG-r}
       {\cal G} (p,v) = \nabla  (f\circ\R_p)(v) \,\, \mbox{ for all } \, (p,v) \in \T {\cal M}.
       \end{equation}

\begin{theorem}\label{thm:main.first-order-r}
Under Assumptions~\ref{Ass1}, \ref{Ass2}, \ref{Ass3} with ${\cal G}$ given in \eqref{eq:cG-r}, and \ref{Ass4}, let $p_0, p_1,p_2,\ldots$ be the iterates produced by Algorithm 1. Choose $a\in (0,+\infty]$ and $b\in (0,1]$  satisfying Assumption~\ref{Ass4}. For every $\epsilon > 0$, if  $\|\grad f(p_{k})\|>\epsilon$ for all  $ k \in \{1,\ldots,N\}$ then
\begin{equation}\label{eq:main.first-order-r}
		N \leq 2 +  \left[\frac{48(f(p_{1})-f_{low})}{\sigma_{1}}+2 \|v_0\|^3 \right]   \max \left\{ a^{-3} , \left( \frac{b\epsilon}{ \tau } \right)^{-\frac{3}{2}} \right\},
	\end{equation}
	where $\tau$ is defined in \eqref{eq:tegdgbgt}.
\end{theorem}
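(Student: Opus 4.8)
The plan is to mimic the structure of the proof of Theorem~\ref{thm:main.first-order}, replacing the isometry argument (which is special to $\R=\exp$ and parallel transport) by Assumption~\ref{Ass4}. As before, I would first dispose of the trivial cases $N=1$ and $N=2$, for which \eqref{eq:main.first-order-r} holds automatically, and then assume $N\geq 3$. With $\bar{k}$ defined as in \eqref{eq:defkbar}, note that $\bar{k}\in\{1,\ldots,N-2\}$, so $\bar{k}+2\in\{1,\ldots,N\}$ and hence the hypothesis gives $\|\grad f(p_{\bar{k}+2})\|>\epsilon$. Since $p_{\bar{k}+2}=\R_{p_{\bar{k}+1}}(v_{\bar{k}+1})$, the idea is to bound $\|\grad f(p_{\bar{k}+2})\|$ from above in terms of $\|v_{\bar{k}}\|$ and $\|v_{\bar{k}+1}\|$.

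The key step is a case split on the size of $\|v_{\bar{k}+1}\|$. If $\|v_{\bar{k}+1}\|\leq a$, then Assumption~\ref{Ass4} applies with $v=v_{\bar{k}+1}$ at iteration $k=\bar{k}+1$, giving
\[
b\|\grad f(p_{\bar{k}+2})\| \leq \|\nabla\hat{f}_{\bar{k}+1}(v_{\bar{k}+1})\| = \|{\cal G}(p_{\bar{k}+1},v_{\bar{k}+1})\| \leq \tau\max\{\|v_{\bar{k}}\|,\|v_{\bar{k}+1}\|\}^2,
\]
where the last inequality is Lemma~\ref{lem:bound.calG} with $k=\bar{k}+1$ and ${\cal G}$ as in \eqref{eq:cG-r}. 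Combining with $\|\grad f(p_{\bar{k}+2})\|>\epsilon$ yields $\max\{\|v_{\bar{k}}\|,\|v_{\bar{k}+1}\|\}^2 > b\epsilon/\tau$, i.e. $\max\{\|v_{\bar{k}}\|,\|v_{\bar{k}+1}\|\} > (b\epsilon/\tau)^{1/2}$. In the complementary case $\|v_{\bar{k}+1}\| > a$, we trivially have $\max\{\|v_{\bar{k}}\|,\|v_{\bar{k}+1}\|\} > a$. Putting the two cases together,
\[
\max\{\|v_{\bar{k}}\|,\|v_{\bar{k}+1}\|\} > \min\left\{ a, \left(\frac{b\epsilon}{\tau}\right)^{1/2} \right\}.
\]
On the other hand, Lemma~\ref{lem:aux.conv.expc} gives an upper bound on this same quantity, namely $\left[\frac{48(f(p_1)-f_{low})}{\sigma_1}+2\|v_0\|^3\right]^{1/3}(N-2)^{-1/3}$.

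Chaining the lower and upper bounds on $\max\{\|v_{\bar{k}}\|,\|v_{\bar{k}+1}\|\}$ gives
\[
(N-2)^{-1/3}\left[\frac{48(f(p_1)-f_{low})}{\sigma_1}+2\|v_0\|^3\right]^{1/3} > \min\left\{ a, \left(\frac{b\epsilon}{\tau}\right)^{1/2} \right\},
\]
and solving for $N$ — cubing both sides and using $\min\{a,c\}^{-3}=\max\{a^{-3},c^{-3}\}$ — yields exactly \eqref{eq:main.first-order-r}. The main obstacle, or rather the only subtlety, is handling the case $\|v_{\bar{k}+1}\|>a$ cleanly so that the $a^{-3}$ term appears in the final bound; this is precisely why the $\max\{a^{-3},\cdot\}$ shows up in \eqref{eq:main.first-order-r} rather than just the $\epsilon$-dependent term, and it is the reason Assumption~\ref{Ass4} only needs to hold for $\|v\|\leq a$. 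Everything else is a routine repackaging of the argument from Theorem~\ref{thm:main.first-order}.
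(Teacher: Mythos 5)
Your proposal is correct and follows essentially the same route as the paper's proof: trivial cases $N=1,2$, the same $\bar{k}$, the same two-case split handled via Assumption~\ref{Ass4} on one side and the trivial bound by $a$ on the other, then Lemma~\ref{lem:bound.calG} with ${\cal G}$ as in \eqref{eq:cG-r} and Lemma~\ref{lem:aux.conv.expc} to chain the bounds. The only cosmetic difference is that you split on $\|v_{\bar{k}+1}\|\leq a$ versus $\|v_{\bar{k}+1}\|>a$ while the paper splits on $\max\{\|v_{\bar{k}}\|,\|v_{\bar{k}+1}\|\}<a$ versus $\geq a$; the two partitions lead to the identical conclusion.
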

\begin{proof}
Inequality \eqref{eq:main.first-order-r} is trivially satisfied when $N=1$ and $N=2$. Assume $N\geq 3$. Throughout the proof consider the definition of $\bar{k}$ given in \eqref{eq:defkbar}. Our analysis will be divided into the following two cases:
\begin{itemize}
 \item[(i)] $\max \{ \| v_{ \bar{k}  } \| , \| v_{\bar{k} + 1}\| \} \in [0,a)$;
 \item[(ii)] $\max\{ \| v_{ \bar{k}  } \| , \| v_{\bar{k} + 1 }\| \} \geq a$.
 \end{itemize}
 If case (i) holds then $\| v_{\bar{k}+1} \|\leq a$ and, by Assumption~\ref{Ass4} with $v=v_{\bar{k}+1}$ and $k=\bar{k}+1$, we have
$$
\|   \nabla \hat{f}_{\bar{k}+1} ( v_{\bar{k}+1} ) \| \geq b \| \grad f(\R_{p_{\bar{k}+1}}(v_{\bar{k}+1})) \| = b\|\grad f(p_{\bar{k}+2})\|.
$$
By hypothesis and the fact that $\bar{k} \in \{1,\ldots,N-2\}$ we conclude that
 $ \|\grad f(p_{\bar{k}+2})\|>\epsilon$ and, therefore, the above inequality leads to $\|   \nabla \hat{f}_{\bar{k}+1} ( v_{\bar{k}+1} ) \| > b\epsilon $.
 Using this, \eqref{eq:cG-r} with $(p,v)=(p_{\bar{k}+1} , v_{\bar{k}+1} )$ and
 Lemma~\ref{lem:bound.calG} with $k=\bar{k}+1$ and ${\cal G}$ as in \eqref{eq:cG-r}, one can conclude that
$$
\left( \frac{b\epsilon}{\tau}  \right)^{ \frac{1}{2} } <  \left( \frac{ \|   \nabla \hat{f}_{\bar{k}+1} ( v_{\bar{k}+1 } ) \| }{\tau} \right)^{ \frac{1}{2} } = \left( \frac{ \| {\cal G}(p_{\bar{k}+1}, v_{ \bar{k}+1 } ) \| }{\tau} \right)^{ \frac{1}{2} }
                 \leq  \max\left\{ \| v_{ \bar{k} } \|  , \| v_{ \bar{k} +1 } \| \right\}.
$$
On the other hand, if case (ii) holds then
         $
	 a \leq \max\left\{ \| v_{\bar{k}-1} \| , \| v_{\bar{k}} \|\right\}.
	 $
	   Thus, in all cases, one can conclude that
   $$
   \min\left\{ a , \left( \frac{b\epsilon}{ \tau } \right)^{\frac{1}{2}} \right\}
   \leq \max\left\{ \| v_{\bar{k}-1} \| , \| v_{\bar{k}} \|\right\}
   \leq \left[\frac{48(f(p_{1})-f_{low})}{\sigma_{1}}+2 \|v_0\|^3 \right]^{\frac{1}{3}}\dfrac{1}{(N-2)^{\frac{1}{3}}},
   $$
where the second inequality follows from Lemma \ref{lem:aux.conv.expc}. By rearranging the terms in a convenient way we can get \eqref{eq:main.first-order-r}.
\end{proof}

As previously discussed in Remarks~\ref{rem:Ass2} and \ref{rem:Ass3}, if $\nabla^2 \hat{f}_k$ is $L$-Lipschitz for each iteration $k$, and $\R$ is a second-order retraction, then Assumptions~\ref{Ass2} and \ref{Ass3} are satisfied with $L' = L$ and ${\cal G}$
 given in \eqref{eq:cG-r}. Considering this and the previous theorem, we can derive the following corollary.

\begin{corollary}\label{cor:poR}
Under Assumptions~\ref{Ass1} and ~\ref{Ass4}, let $p_0, p_1,p_2,\ldots$ be the iterates produced by Algorithm 1 with a second-order retraction chosen. In each iteration $k$ assume that $\nabla^2 \hat{f}_k$ is L-Lipschitz. Choose $a\in (0,+\infty]$ and $b\in (0,1]$ satisfying Assumption~\ref{Ass4}. Then, for every $\epsilon > 0$, one has $\|\grad f(p_{k})\| \leq \epsilon$ for all
 \begin{equation*}\label{eq:main.first-order-r-co}
		k >2 +  \left[\frac{48(f(p_{1})-f_{low})}{\sigma_{1}}+2 \|v_0\|^3 \right]   \max \left\{ a^{-3} , \left( \frac{b\epsilon}{ \tau } \right)^{-\frac{3}{2}} \right\},
	\end{equation*}
	where $\tau$ is considered with $L'=L$ in \eqref{eq:tegdgbgt}. In particular, $\lim_{k \to \infty } \| \grad f (p_k) \| =0 $.
\end{corollary}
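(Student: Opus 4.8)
The plan is to derive Corollary \ref{cor:poR} directly from Theorem \ref{thm:main.first-order-r}, the only work being to certify that the hypotheses of that theorem are met in the present setting. The first step is to verify Assumptions \ref{Ass2} and \ref{Ass3}. Since $\R$ is chosen to be a second-order retraction, Remark \ref{rem:Ass2} applies: it records that ``$\R$ second-order and $\nabla^2\hat{f}_k$ $L$-Lipschitz'' is a sufficient condition for Assumption \ref{Ass2}, the point being that $\grad f(p_k)=\nabla\hat{f}_k(0)$ and $\hess f(p_k)=\nabla^2\hat{f}_k(0)$, so \eqref{eq:Ass2} becomes the Euclidean second-order Taylor bound \eqref{eq:Ass2.r} for $\hat{f}_k$ on the vector space $\T_{p_k}\cM$, which is Lemma \ref{lem:ineq.hess.lips} specialized to a tangent space (where $\exp$ is the identity). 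Hence Assumption \ref{Ass2} holds with constant $L$. Likewise, Remark \ref{rem:Ass3} (again using that $\R$ is second-order) says that $L$-Lipschitzness of $\nabla^2\hat{f}_k$ makes Assumption \ref{Ass3} hold with $L'=L$ and with ${\cal G}(p,v)=\nabla(f\circ\R_p)(v)$, which is precisely the map \eqref{eq:cG-r}. Adding the hypothesized Assumptions \ref{Ass1} and \ref{Ass4} and the chosen $a\in(0,+\infty]$, $b\in(0,1]$, all four standing assumptions of Theorem \ref{thm:main.first-order-r} are in force, and the constant $\tau$ of \eqref{eq:tegdgbgt} is the one evaluated at $L'=L$.

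The second step is to invoke Theorem \ref{thm:main.first-order-r}: for every $\epsilon>0$, whenever $\|\grad f(p_k)\|>\epsilon$ for all $k\in\{1,\dots,N\}$ one has $N\le 2+C\max\{a^{-3},(b\epsilon/\tau)^{-3/2}\}$ with $C=\frac{48(f(p_1)-f_{\low})}{\sigma_1}+2\|v_0\|^3$. Contrapositively, once $N$ exceeds this bound the run of consecutive large-gradient iterates starting from $p_1$ must have already broken, which pins the iteration threshold in the statement; that \emph{every} subsequent iterate keeps $\|\grad f(p_k)\|\le\epsilon$, not just the first one, is then obtained from the decay of the steps: summing \eqref{eq:sumileexc-r} over all $N$ gives $\sum_k\|v_k\|^3<\infty$, hence $\|v_k\|\to 0$, and Lemma \ref{lem:bound.calG} together with $\nabla\hat{f}_k(v_k)={\cal G}(p_k,v_k)$ and Assumption \ref{Ass4} (valid for large $k$, where $\|v_k\|\le a$) yield $\|\grad f(p_{k+1})\|\le b^{-1}\tau\max\{\|v_{k-1}\|,\|v_k\|\}^2$, which tends to $0$. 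In particular $\lim_{k\to\infty}\|\grad f(p_k)\|=0$.

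Two points deserve attention. First, the hypotheses must be matched through Remarks \ref{rem:Ass2}--\ref{rem:Ass3}: choosing a \emph{second-order} retraction is what permits replacing the abstract conditions \eqref{eq:Ass2} and \eqref{eq:Ass3-lc} by $L$-Lipschitzness of the \emph{pullback} Hessian $\nabla^2\hat{f}_k$, and ${\cal G}$ in Assumption \ref{Ass3} must be taken as $\nabla(f\circ\R_p)$, not the parallel-transport variant, so that ${\cal G}(p_k,v_k)$ is literally the pullback gradient $\nabla\hat{f}_k(v_k)$ that Assumption \ref{Ass4} ties to $\grad f(p_{k+1})$; this also forces the constant in the final bound to be $\tau$ from \eqref{eq:tegdgbgt} with $L'=L$. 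Second, and this is the only mildly non-mechanical point, Theorem \ref{thm:main.first-order-r} on its own only precludes long \emph{initial} runs of large-gradient iterates, so the step-decay argument above is what upgrades the conclusion to hold at \emph{every} iterate past the threshold and delivers the limit.
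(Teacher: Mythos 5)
Your proposal is correct and its core is exactly the paper's derivation: the paper also proves this corollary in one sentence by citing Remarks~\ref{rem:Ass2} and \ref{rem:Ass3} to certify that a second-order retraction plus $L$-Lipschitz $\nabla^2\hat f_k$ yields Assumptions~\ref{Ass2} and \ref{Ass3} with $L'=L$ and ${\cal G}$ as in \eqref{eq:cG-r}, and then invoking Theorem~\ref{thm:main.first-order-r} contrapositively. Where you go beyond the paper is in your second step: you correctly observe that the theorem, read contrapositively, only guarantees that the initial run of iterates with $\|\grad f(p_k)\|>\epsilon$ cannot extend past the threshold (i.e.\ \emph{some} $k$ below the threshold has a small gradient), whereas the corollary asserts smallness at \emph{every} $k$ beyond it; the paper silently elides this distinction. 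Your repair via $\sum_k\|v_k\|^3<\infty$, hence $\|v_k\|\to0$, combined with Lemma~\ref{lem:bound.calG} and Assumption~\ref{Ass4} (applicable once $\|v_k\|\le a$), is sound and does deliver the final claim $\lim_{k\to\infty}\|\grad f(p_k)\|=0$. Be aware, though, that this asymptotic argument still does not establish the quantitative ``for all $k$ exceeding the \emph{stated} threshold'' assertion, since the decay of $\max\{\|v_{k-1}\|,\|v_k\|\}$ is not uniform past that specific index — but that imprecision is inherited from the paper's own statement rather than introduced by you.
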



         \subsection{Second-order analysis}

 In this subsection we shall give a second-order complexity result for Algorithm 1 with second-order progress condition \eqref{eq:icrehe3} enforced. This condition is similar to one given in \cite{grapiglia2022cubic} for the same purpose in the Euclidean case. Unlike the first-order analysis in the previous subsection,  here we give second-order analysis only for general retraction $\R$, but not for the particular $\R = \exp$.

\begin{theorem}\label{thm:main.hess}
	Under Assumptions~\ref{Ass1} and \ref{Ass2}, let $p_{0}, p_1 \ldots $ be the iterates produced by Algorithm 1 with second-order progress \eqref{eq:icrehe3} enforced. For every  $\epsilon > 0$, if
        $\lambda_{\min} \left( \hess f(p_{k}) \right)< - \epsilon $ for all  $k \in \{1,\ldots,N\}$ then
        \begin{equation}\label{tep:seceppghn.}
           N     \leq   2 +
    \left[\frac{48(f(p_{1})-f_{low})}{\sigma_{1}}+2 \|v_0\|^3 \right]  \left[ \frac{ \epsilon }{ \sigma_{ \max }  + \theta + \kappa_{\B} } \right]^{-3},
 	\end{equation}
	where $\sigma_{\max}$ is defined in \eqref{eq:bound.sigmak}.
\end{theorem}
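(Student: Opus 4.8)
The strategy mirrors the first-order complexity arguments (Theorems~\ref{thm:main.first-order} and~\ref{thm:main.first-order-r}): use Lemma~\ref{lem:aux.conv.expc} to control the smallest of two consecutive step lengths, and then exploit the second-order progress condition \eqref{eq:icrehe3} together with the Hessian approximation bound in \eqref{eq:grahess.alg} to convert a lower bound on $-\lambda_{\min}(\hess f(p_k))$ into a lower bound on $\max\{\|v_{\bar k}\|,\|v_{\bar k+1}\|\}$. Dispatching $N=1$ and $N=2$ trivially, I would assume $N\geq 3$ and let $\bar k\in\{1,\ldots,N-2\}$ be as in \eqref{eq:defkbar}.

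\textbf{Main estimate.} For the index $k=\bar k+1$ (so that both $k-1=\bar k$ and $k=\bar k+1$ lie in the admissible range), I would start from \eqref{eq:icrehe3} with $\alpha=\alpha_k$, namely
$
\lambda_{\min}(\B_{k,\alpha_k}) \geq -2^{\alpha_k-1}\sigma_k\|v_k\| - \theta\|v_{k-1}\|,
$
and recall $\sigma_{k+1}=2^{\alpha_k-1}\sigma_k\leq\sigma_{\max}$ by Corollary~\ref{cor:gooddef}. Weyl-type perturbation of eigenvalues under the operator norm gives
$
\lambda_{\min}(\hess f(p_k)) \geq \lambda_{\min}(\B_{k,\alpha_k}) - \|\hess f(p_k)-\B_{k,\alpha_k}\|_{\op},
$
and the second inequality of \eqref{eq:grahess.alg} bounds the last term by $\kappa_{\B}\,2^{-(\alpha_k-1)}\|v_{k-1}\|\leq\kappa_{\B}\|v_{k-1}\|$. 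Combining, and using $\|v_k\|,\|v_{k-1}\|\leq\max\{\|v_{k-1}\|,\|v_k\|\}$, yields
$
\lambda_{\min}(\hess f(p_k)) \geq -(\sigma_{\max}+\theta+\kappa_{\B})\max\{\|v_{k-1}\|,\|v_k\|\}.
$
Since by hypothesis $\lambda_{\min}(\hess f(p_{\bar k+1}))<-\epsilon$, this forces
$
\max\{\|v_{\bar k}\|,\|v_{\bar k+1}\|\} > \dfrac{\epsilon}{\sigma_{\max}+\theta+\kappa_{\B}}.
$

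\textbf{Conclusion.} On the other hand, Lemma~\ref{lem:aux.conv.expc} gives
$
\max\{\|v_{\bar k}\|,\|v_{\bar k+1}\|\} \leq \left[\frac{48(f(p_1)-f_{low})}{\sigma_1}+2\|v_0\|^3\right]^{1/3}(N-2)^{-1/3}.
$
Chaining the two bounds and solving for $N$ produces exactly \eqref{tep:seceppghn.}. The only genuinely delicate point is making sure the eigenvalue perturbation step is stated correctly for self-adjoint operators on the tangent space (the Weyl inequality $|\lambda_{\min}(A)-\lambda_{\min}(B)|\leq\|A-B\|_{\op}$), since $\hess f(p_k)$ and $\B_{k,\alpha_k}$ are symmetric; everything else is bookkeeping with the constants already bounded in Corollary~\ref{cor:gooddef}. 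Note also that the argument uses only \eqref{eq:icrehe3}, \eqref{eq:grahess.alg}, and the boundedness of $\sigma_k$, so Assumption~\ref{Ass3} and the retraction-dependent Assumption~\ref{Ass4} are not needed here.
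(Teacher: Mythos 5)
Your proof follows essentially the same route as the paper's: bound $\lambda_{\min}(\B_{\bar k+1,\alpha_{\bar k+1}})$ from below via \eqref{eq:icrehe3} and Corollary~\ref{cor:gooddef}, from above via the hypothesis plus the Hessian-approximation error in \eqref{eq:grahess.alg} (the paper writes out the Rayleigh-quotient computation where you invoke the Weyl inequality, but these are the same estimate), and close with Lemma~\ref{lem:aux.conv.expc}. One small slip: since Step 1 only guarantees $\alpha_k\geq 0$, you cannot bound $\kappa_{\B}\,2^{-(\alpha_k-1)}\|v_{k-1}\|$ by $\kappa_{\B}\|v_{k-1}\|$ --- when $\alpha_k=0$ the factor $2^{-(\alpha_k-1)}$ equals $2$ --- so the correct bound is $2\kappa_{\B}\|v_{k-1}\|$ and the resulting constant is $\sigma_{\max}+\theta+2\kappa_{\B}$. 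The paper's own proof arrives at exactly this $2\kappa_{\B}$, so the bare $\kappa_{\B}$ in the displayed bound \eqref{tep:seceppghn.} appears to be a typo in the statement rather than a defect of your argument; in any case the ${\cal O}(\epsilon^{-3})$ complexity order is unaffected. You are also right that Assumptions~\ref{Ass3} and~\ref{Ass4} are not needed here.
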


\begin{proof}
 It is clear that \eqref{tep:seceppghn.} holds for $N=1$ and $N=2$. Assume $N\geq 3$.  Let $\bar{k}$ be the constant defined in \eqref{eq:defkbar}. By using the second inequality of \eqref{eq:grahess.alg} with $k=\bar{k}+1$ and $\alpha=\alpha_{\bar{k}+1}$, we obtain
         \begin{align*}
         \left\langle  \B_{ \bar{k}+1, \alpha_{ \bar{k} +1 } } \left[ \frac{v}{\|v\|} \right], \frac{v}{\|v\|} \right\rangle
         &= \left\langle  \hess f(p_{\bar{k} +1 }) \left[ \frac{v}{\|v\|} \right], \frac{v}{\|v\|} \right\rangle + \left\langle \left( \B_{ \bar{k} + 1, \alpha_{ \bar{k} +1 } }  -  \hess f(p_{\bar{k} + 1 })  \right)\left[ \frac{v}{\|v\|} \right], \frac{v}{\|v\|} \right\rangle  \\
		&\leq \left\langle  \hess f(p_{\bar{k} +1 }) \left[ \frac{v}{\|v\|} \right], \frac{v}{\|v\|} \right\rangle + \left\| \B_{ \bar{k} +1, \alpha_{ \bar{k} +1 } } -  \hess f(p_{\bar{k} +1 })  \right\|_{\op}\\
		&\leq \left\langle  \hess f(p_{\bar{k} +1  }) \left[ \frac{v}{\|v\|} \right], \frac{v}{\|v\|} \right\rangle + \frac{\kappa_{\B}}{2^{\alpha-1}} \| v_{ \bar{k}  } \| \\
		&\leq \left\langle  \hess f(p_{\bar{k} +1  }) \left[ \frac{v}{\|v\|} \right], \frac{v}{\|v\|} \right\rangle + 2\kappa_{\B} \| v_{ \bar{k}  } \|,
	\end{align*}
	for all $  v\in \T_{p_{ \bar{k} +1 }} {\mathcal{M}} \backslash \{0\}$.  After calculating the infimum with respect to $  v\in \T_{p_{ \bar{k} +1  }} {\mathcal{M}}$ on both sides and  use the hypothesis of this theorem, we arrive at
	$$
		\lambda_{\min}\left( \B_{ \bar{k} + 1 , \alpha_{ \bar{k} + 1 }  } \right) \leq \lambda_{\min} \left( \hess f(p_{\bar{k} +1  }) \right)
 + 2\kappa_{\B} \| v_{ \bar{k}  } \|
 < - \epsilon
 + 2 \kappa_{\B} \| v_{ \bar{k}  } \| \leq - \epsilon
 + 2 \kappa_{\B} \max\{  \| v_{ \bar{k}  } \|  , \| v_{ \bar{k} + 1 } \|   \}.
	$$
	It follows from $ \sigma_{ \bar{k} + 2 } = 2^{\alpha_{ \bar{k} +1 } - 1} \sigma_{ \bar{k}  + 1}  $, $v_{\bar{k}+1}=v_{\bar{k} +1, \alpha_{\bar{k}+1}}$, Corollary~\ref{cor:gooddef} with the previous inequalities, \eqref{eq:icrehe3} with $k=\bar{k}+1$ and $\alpha=\alpha_{ \bar{k} +1 }$ that
\begin{align*}
         - (\sigma_{ \max }  + \theta ) \max\{\| v_{ \bar{k} } \|, \| v_{ \bar{k} + 1} \| \}
         & \leq  - (   \sigma_{ \bar{k} + 2}  + \theta ) \max\{\| v_{ \bar{k} } \|, \| v_{ \bar{k} +1 } \| \}  \\
         &= - (   2^{\alpha_{ \bar{k} +1 }-1}\sigma_{\bar{k} +1 }  + \theta ) \max\{\| v_{\bar{k}} \|, \| v_{\bar{k} +1,\alpha_{\bar{k} + 1}} \| \}   \\
          &  \leq -   2^{\alpha_{ \bar{k} +1 }-1}\sigma_{\bar{k} +1 }  \| v_{\bar{k} +1,\alpha_{\bar{k} + 1}} \|  -  \theta \| v_{\bar{k}} \| \\
        & \leq  \lambda_{\min}\left( \B_{ \bar{k}+1, \alpha_{ \bar{k}+1 } } \right)
                 <     - \epsilon
 + 2 \kappa_{\B} \max\{  \| v_{ \bar{k}  } \|  , \| v_{ \bar{k} +1 } \|   \}.
	\end{align*}
       Hence, it follows that
           $$
             \frac{ \epsilon }{ \sigma_{ \max }  + \theta + 2\kappa_{\B} }    <
  \max\{  \| v_{ \bar{k}  } \|  , \| v_{ \bar{k} +1  } \|   \}  \leq \left[\frac{48(f(p_{1})-f_{low})}{\sigma_{1}}+2 \|v_0\|^3 \right]^{\frac{1}{3}} \frac{1}{(N-2)^{\frac{1}{3}}},
       $$
	where the second inequality comes from Lemma~\ref{lem:aux.conv.expc}. Starting from this inequality, we arrive at \eqref{tep:seceppghn.} by using simple algebraic manipulations. Therefore, the proof is complete.
\end{proof}

The next result is an immediate consequence of the previous theorem. Looking at this result and at Corollary~\ref{cor:poexp} or \ref{cor:poR}, depending on the Assumptions required, it is possible to find a number of iterations $N$ such that $p_N$ is an approximate second-order critical point, that is $p_N$ satisfies $\|\grad f(p_N) \| \leq \epsilon_g$ and $\lambda_{\min} \left( \hess f(p_{N}) \right) \geq - \epsilon_{H} $, for $\epsilon_{g}>0$ and $\epsilon_{H}>0$ chosen arbitrarily.

\begin{corollary}\label{cor:main.hess.cor}
	Under Assumptions~\ref{Ass1} and \ref{Ass2}, let $p_{0}, p_1 \ldots $ be the iterates produced by Algorithm 1 with second-order progress \eqref{eq:icrehe3} enforced. Then, for every  $\epsilon > 0$, one has
        $\lambda_{\min} \left( \hess f(p_{k}) \right) \geq - \epsilon $ for all
        \begin{equation*}
           k     >    2 +
    \left[\frac{48(f(p_{1})-f_{low})}{\sigma_{1}}+2 \|v_0\|^3 \right]  \left[ \frac{ \epsilon }{ \sigma_{ \max }  + \theta + \kappa_{\B} } \right]^{-3},
 	\end{equation*}
	where $\sigma_{\max}$ is defined in \eqref{eq:bound.sigmak}. In particular,
	$\liminf_{ k \to \infty } \lambda_{\min} \left( \hess f(p_{k}) \right)  \geq 0$.
\end{corollary}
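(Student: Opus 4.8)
The plan is to read Corollary~\ref{cor:main.hess.cor} off Theorem~\ref{thm:main.hess} by contraposition, exactly as Corollaries~\ref{cor:poexp} and \ref{cor:poR} were obtained from Theorems~\ref{thm:main.first-order} and \ref{thm:main.first-order-r}. Fix $\epsilon>0$ and write
$$
N_\epsilon \coloneqq 2 + \left[\frac{48(f(p_{1})-f_{low})}{\sigma_{1}}+2 \|v_0\|^3 \right]\left[ \frac{ \epsilon }{ \sigma_{ \max }  + \theta + \kappa_{\B} } \right]^{-3}.
$$
Theorem~\ref{thm:main.hess} says that the strict inequality $\lambda_{\min}(\hess f(p_k))<-\epsilon$ cannot hold simultaneously for every $k\in\{1,\dots,N\}$ once $N>N_\epsilon$; equivalently, no iteration index $k>N_\epsilon$ can carry this strict inequality, so $\lambda_{\min}(\hess f(p_k))\ge-\epsilon$ for all $k>N_\epsilon$. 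This is the first assertion, and the hypotheses used are exactly those of Theorem~\ref{thm:main.hess}: Assumptions~\ref{Ass1} and \ref{Ass2}, and the fact that the second-order progress condition \eqref{eq:icrehe3} is enforced by the algorithm.

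For the limiting statement I would let $\epsilon$ decrease to $0$ in the first assertion. For each fixed $\epsilon>0$ it gives $\lambda_{\min}(\hess f(p_k))\ge-\epsilon$ for all $k>N_\epsilon$, hence $\liminf_{k\to\infty}\lambda_{\min}(\hess f(p_k))\ge-\epsilon$; since $\epsilon>0$ is arbitrary, $\liminf_{k\to\infty}\lambda_{\min}(\hess f(p_k))\ge0$. An alternative is to use directly the estimate $\lambda_{\min}(\hess f(p_k))\ge-(\sigma_{\max}+\theta+2\kappa_{\B})\max\{\|v_{k-1}\|,\|v_k\|\}$, valid for every $k\ge1$ (it is what the inequality chain in the proof of Theorem~\ref{thm:main.hess} produces before the spectral hypothesis is invoked), together with $\|v_k\|\to0$, which follows from \eqref{eq:sumileexc-r} on letting $N\to\infty$.

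Being a corollary, there is no genuine obstacle here; the only point needing a little care is the logical packaging. Theorem~\ref{thm:main.hess} bounds the length $N$ of an initial run $p_1,\dots,p_N$ of iterates on which $\lambda_{\min}(\hess f(p_k))\ge-\epsilon$ fails throughout, and one must observe --- as is done implicitly in the derivation of the first-order corollaries --- that this is the same as asserting that no such failure survives past iteration $N_\epsilon$. Once this reformulation is in place the constants match verbatim with \eqref{tep:seceppghn.}, and the result follows.
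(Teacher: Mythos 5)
Your route is the paper's route: the paper offers no proof of Corollary~\ref{cor:main.hess.cor} beyond the sentence ``the next result is an immediate consequence of the previous theorem,'' and your first paragraph is exactly that contraposition of Theorem~\ref{thm:main.hess} with matching constants. In that sense the two coincide.

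That said, the word ``equivalently'' in your first paragraph hides a real logical jump, and you should not wave it away as mere packaging. The contrapositive of Theorem~\ref{thm:main.hess} is: for every $N>N_\epsilon$ there \emph{exists} $k\in\{1,\dots,N\}$ with $\lambda_{\min}(\hess f(p_k))\geq-\epsilon$. That is not the same as ``no index $k>N_\epsilon$ carries the strict inequality'': the theorem's hypothesis is applied only at the single index $\bar{k}+1$ selected by the argmin in \eqref{eq:defkbar}, so it bounds the position of the \emph{first} good iterate (or, with a counting argument on $\sum_k\|v_k\|^3$, the cardinality of the bad set), not the location of every bad iterate. This gap is inherited from the paper itself, which states the corollary in the same strong ``for all $k>N_\epsilon$'' form, so you are no worse off than the source --- but your proposed patch does not close it. What does work, and is the genuinely valuable part of your proposal, is the second route you sketch for the $\liminf$: the chain in the proof of Theorem~\ref{thm:main.hess}, before the spectral hypothesis is invoked, gives the unconditional per-iteration bound $\lambda_{\min}(\hess f(p_k))\geq-(\sigma_{\max}+\theta+2\kappa_{\B})\max\{\|v_{k-1}\|,\|v_k\|\}$ (using \eqref{eq:icrehe3} and the second inequality of \eqref{eq:grahess.alg}), and $\|v_k\|\to0$ follows from \eqref{eq:sumileexc-r} as $N\to\infty$. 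That argument delivers $\liminf_{k\to\infty}\lambda_{\min}(\hess f(p_k))\geq0$ cleanly and without the contraposition issue; I would make it the primary proof of the limiting statement rather than an aside.
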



\section{Finite-Difference Approximations for Gradients and Hessians}\label{sec:FDAGH}

 In this section we borrow finite-differences to generate
 approximations $g_{k,\alpha}$ and $\B_{k,\alpha}$ satisfying \eqref{eq:grahess.alg}. These approximations are Riemannian extensions of those already known in the Euclidean case, see \cite{cartis2012oracle,nocedal1999numerical} for instance.

 For results established in this section, we assume that, for each iteration
  $k$ and constant $\alpha \geq 0$, $\mathfrak{B}^k \coloneqq \{ e^k_1, \ldots, e^k_n \}$ forms an orthonormal basis for $\T_{p_k} {\cal M}$ and  $h_{k,\alpha}$ is a real number defined by
\begin{equation}\label{eq:defhka}
h_{k,\alpha} \coloneqq \frac{\| v_{k-1} \|}{2^{\alpha - 1}\sigma_{k}}.
\end{equation}




\subsection{Approximation for Gradients}

The following result provides an approximation
 $g_{k,\alpha}$ for $\grad f(p_k)$ that depends only on evaluations of the objective function $f$ and satisfies the first inequality in \eqref{eq:grahess.alg} for every iteration $k$ and constant $\alpha \geq 0$.   The analysis presented here is developed for a general retraction $\R$. Recall that
$\hat{f}_k$ refers to the notation introduced in \eqref{notation}.

\begin{proposition}\label{prop:g-grad-apro}
      For every iteration $k$ and constant $\alpha \geq 0$, let $g_{k,\alpha} \in   \T_{p_k} {\cal M}$ be defined by
	\begin{equation}\label{eq:algAdehgr}
		g_{k,\alpha} = \sum_{i=1}^{n} \frac{ \hat{f}_k( h_{k,\alpha} e_{i}^{k} )  -  \hat{f}_k( - h_{k,\alpha} e_{i}^{k} )  }{ 2 h_{k,\alpha} } e_{i}^{k}.
	\end{equation}
Under Assumption~\ref{Ass2}, the first inequality of \eqref{eq:grahess.alg} is always satisfied with $\kappa_{g} = L\sqrt{n}/(3 \sigma^2_1)$.
\end{proposition}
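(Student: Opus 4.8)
The plan is to reduce the statement to the classical central‑difference error estimate carried out coordinatewise in the orthonormal basis $\mathfrak{B}^k=\{e_1^k,\dots,e_n^k\}$, and then to reassemble the $n$ coordinate estimates into the desired norm bound. I would fix an iteration $k$ and a constant $\alpha\ge 0$ and abbreviate $h\coloneqq h_{k,\alpha}$. The key preliminary observation is that, by \eqref{eq:rel.betw.f.hf}, $\hat f_k(0)=f(p_k)$ and $\nabla\hat f_k(0)=\grad f(p_k)$ hold for \emph{every} retraction, so Assumption~\ref{Ass2} applies to the pullback $\hat f_k$ directly, with no need for $\R$ to be a second‑order retraction.

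First I would invoke \eqref{eq:Ass2} twice for each index $i\in\{1,\dots,n\}$: once with $v=h e_i^k$ and once with $v=-h e_i^k$. Since $\|e_i^k\|=1$, the remainder in each case is bounded by $\tfrac{L}{6}h^3$. Subtracting the two inequalities cancels the constant and the quadratic terms, and the triangle inequality yields
\begin{equation*}
\left| \hat f_k(h e_i^k)-\hat f_k(-h e_i^k)-2h\,\langle\grad f(p_k),e_i^k\rangle \right|\le \frac{L}{3}h^3 ,
\end{equation*}
hence, dividing by $2h$ and recalling from \eqref{eq:algAdehgr} that $\langle g_{k,\alpha},e_i^k\rangle=\bigl(\hat f_k(h e_i^k)-\hat f_k(-h e_i^k)\bigr)/(2h)$,
\begin{equation*}
\left| \langle g_{k,\alpha}-\grad f(p_k),e_i^k\rangle \right|\le \frac{L}{6}h^2 .
\end{equation*}
Since $\mathfrak{B}^k$ is orthonormal, summing the squares over $i$ gives $\|g_{k,\alpha}-\grad f(p_k)\|^2\le n\bigl(\tfrac{L}{6}h^2\bigr)^2$, that is, $\|g_{k,\alpha}-\grad f(p_k)\|\le \tfrac{\sqrt{n}\,L}{6}h^2$.

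Finally I would substitute $h=h_{k,\alpha}=\|v_{k-1}\|/(2^{\alpha-1}\sigma_k)$ and estimate the denominator: using $2^{\alpha-1}\ge\tfrac12$ (valid because $\alpha\ge 0$) together with $\sigma_k\ge\sigma_1$ from Corollary~\ref{cor:gooddef}, one gets $(2^{\alpha-1}\sigma_k)^2=2^{\alpha-1}\cdot(2^{\alpha-1}\sigma_k^2)\ge \tfrac12\,2^{\alpha-1}\sigma_1^2$, so that
\begin{equation*}
\|g_{k,\alpha}-\grad f(p_k)\|\le \frac{\sqrt{n}\,L}{6}\cdot\frac{\|v_{k-1}\|^2}{(2^{\alpha-1}\sigma_k)^2}\le \frac{\sqrt{n}\,L}{3\sigma_1^2}\cdot\frac{\|v_{k-1}\|^2}{2^{\alpha-1}},
\end{equation*}
which is exactly the first inequality in \eqref{eq:grahess.alg} with $\kappa_g=L\sqrt{n}/(3\sigma_1^2)$.

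I do not expect a genuine obstacle here: this is the textbook central‑difference error analysis transplanted to $\T_{p_k}\cM$ through the pullback. The only points needing attention are the use of the first‑order identifications $\hat f_k(0)=f(p_k)$ and $\nabla\hat f_k(0)=\grad f(p_k)$ from Remark~\ref{rem:Ass2} (which, unlike the Hessian relation \eqref{eq:rel.betw.f.hf-hess}, require no second‑order retraction), the passage from the componentwise bounds to a norm bound via orthonormality of $\mathfrak{B}^k$, and the arithmetic with the powers of $2$ together with the lower bound $\sigma_k\ge\sigma_1$ that is needed to arrive at precisely the constant $\kappa_g=L\sqrt{n}/(3\sigma_1^2)$.
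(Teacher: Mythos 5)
Your proposal is correct and follows essentially the same route as the paper's proof: a coordinatewise central-difference estimate obtained by applying Assumption~\ref{Ass2} at $v=\pm h_{k,\alpha}e_i^k$ so the quadratic terms cancel, followed by the orthonormality of $\mathfrak{B}^k$ to pass to the norm bound $\tfrac{\sqrt{n}L}{6}(h_{k,\alpha})^2$, and the substitution $h_{k,\alpha}=\|v_{k-1}\|/(2^{\alpha-1}\sigma_k)$ with $2^{\alpha-1}\ge\tfrac12$ and $\sigma_k\ge\sigma_1$ to reach $\kappa_g=L\sqrt{n}/(3\sigma_1^2)$. The only cosmetic difference is that you sum squares of the coordinates whereas the paper bounds the norm by $\sqrt{n}$ times the maximal coordinate, which yields the same constant.
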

\begin{proof}
By leveraging the orthonormality of the basis $\mathfrak{B}^k = \{ e^k_1,\ldots, e^k_n \}$, we can assert that
\begin{align}\label{eq:pepagrad}
\left\| g_{k,\alpha} - \grad f(p_k) \right\|
&\leq \sqrt{n} \max_{i=1,\ldots,n} | \langle g_{k,\alpha} - \grad f(p_k) , e_{i}^{k} \rangle | \nonumber \\
&= \frac{ \sqrt{n} }{ 2 h_{k,\alpha} }\max_{i=1,\ldots,n} | 2 h_{k,\alpha} \langle g_{k,\alpha} - \grad f(p_k) , e_{i}^{k} \rangle |.
\end{align}
Use \eqref{eq:algAdehgr} to rewrite the term within the modulus that appears in
  \eqref{eq:pepagrad} as follows:
\begin{align*}
 \MoveEqLeft
 2 h_{k,\alpha} \left\langle g_{k,\alpha} - \grad f(p_k) , e_{i}^{k} \right\rangle  \\
 & =  \hat{f}_k( h_{k,\alpha} e_{i}^{k} ) - f(p_k) - \langle\grad f(p_k), h_{k,\alpha} e_{i}^{k} \rangle - \dfrac{1}{2}\langle\hess f(p_k)[ h_{k,\alpha} e_{i}^{k} ], h_{k,\alpha} e_{i}^{k} \rangle  \\
& \quad - \left( \hat{f}_k( - h_{k,\alpha} e_{i}^{k} ) - f(p_k) - \langle\grad f(p_k), (-h_{k,\alpha} e_{i}^{k} ) \rangle - \dfrac{1}{2}\langle\hess f(p_k)[ - h_{k,\alpha} e_{i}^{k} ], (- h_{k,\alpha} e_{i}^{k} ) \rangle \right).
 \end{align*}
Applying the triangle inequality and utilizing Assumption~\ref{Ass2} with $v =h_{k,\alpha} e_{i}^{k}$ and $v=-h_{k,\alpha} e_{i}^{k}$, we obtain
\begin{align*}
 \MoveEqLeft
 \left| 2 h_{k,\alpha} \left\langle g_{k,\alpha} - \grad f(p_k) , e_{i}^{k} \right\rangle  \right| \\
& \leq \left| \hat{f}_k( h_{k,\alpha} e_{i}^{k} ) - f(p_k) - \langle\grad f(p_k), h_{k,\alpha} e_{i}^{k} \rangle - \dfrac{1}{2}\langle\hess f(p_k)[ h_{k,\alpha} e_{i}^{k} ], h_{k,\alpha} e_{i}^{k} \rangle \right| \\
& \quad + \left| \hat{f}_k( - h_{k,\alpha} e_{i}^{k} ) - f(p_k) - \langle\grad f(p_k), (-h_{k,\alpha} e_{i}^{k} ) \rangle - \dfrac{1}{2}\langle\hess f(p_k)[ - h_{k,\alpha} e_{i}^{k} ], (- h_{k,\alpha} e_{i}^{k} ) \rangle \right| \\
& \leq \dfrac{L}{6}\| h_{k,\alpha} e_{i}^{k} \|^{3} + \dfrac{L}{6}\| - h_{k,\alpha} e_{i}^{k} \|^{3} =  \dfrac{L}{3} (h_{k,\alpha})^{3}.
\end{align*}
From this inequality, along with \eqref{eq:pepagrad}, Corollary~\ref{cor:gooddef}, and \eqref{eq:defhka}, it follows that
$$
\| g_{k,\alpha} - \grad f(p_k) \|
\leq \frac{ L \sqrt{n} }{ 6  } ( h_{k,\alpha} )^2
=     \frac{ L \sqrt{n} }{ 6  (2^{\alpha -1} \sigma_k^2)}  \frac{\| v_{k-1} \|^2}{ 2^{\alpha -1} }
\leq \frac{ L \sqrt{n} }{ 3 \sigma_1^2}  \frac{\| v_{k-1} \|^2}{ 2^{\alpha -1} } .
$$
This completes the proof.
\end{proof}


\subsection{Approximations for Hessians}

Here, we present some approximations $\B_{k,\alpha}$ satisfying the second inequality in \eqref{eq:grahess.alg}. The following result provides a $\B_{k,\alpha}$ inspired by the finite difference approximation of the Hessian proposed in \cite{absil2008optimization}. This approximation relies on the evaluation of $\grad f$ and the choice of a mapping ${\cal G} \colon \T{\cal M} \to \T{\cal M}$ associated with Assumption \ref{Ass3}. In Remark 4, two natural options for choosing ${\cal G}$ are proposed, and their convenience of use depends on the retraction chosen in Algorithm 1.

\begin{proposition} \label{prop:B-Hess}
	Let ${\cal G} \colon \T{\cal M} \to \T{\cal M}$ be a mapping such that the image of ${\cal G}(p, \cdot ) \colon \T_p{\cal M} \to \T{\cal M}$ belongs to $\T_p{\cal M}$ for all $p\in {\cal M}$.
Suppose that Assumption~\ref{Ass3} is satisfied. For each iteration $k$ and constant $\alpha \geq 0$, if $\B_{k,\alpha} \colon  \T_{p_k} {\cal M} \to  \T_{p_k} {\cal M}$ is defined as
	\begin{equation}\label{eq:algBsaac}
		\B_{k,\alpha} = \frac{  \A_{k,\alpha}  +  \A_{k,\alpha}^* }{2},
	\end{equation}
	where $\A_{k,\alpha} \colon  \T_{p_k} {\cal M} \to  \T_{p_k} {\cal M}$ is the operator that, for each $i \in \{ 1, \ldots , n \}$, assumes
	\begin{equation}\label{eq:algAdeh}
		\A_{k,\alpha}[e^k_i]= \frac{ {\cal G}(p_{k} , h_{k,\alpha} e^{k}_{i})-\grad f(p_k)}{h_{k,\alpha}},
		\end{equation}
         then the second inequality in \eqref{eq:grahess.alg} is always satisfied with
          $\kappa_{\B} = \sqrt{n} L' / (2 \sigma_{1} )  $.
\end{proposition}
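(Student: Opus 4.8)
The plan is to argue column-by-column: first show that $\A_{k,\alpha}$ reproduces $\hess f(p_k)$ well on each basis vector of $\mathfrak{B}^k$, then convert this column-wise estimate into an operator-norm estimate, then check that the symmetrization defining $\B_{k,\alpha}$ does not spoil the bound, and finally plug in the value of $h_{k,\alpha}$.

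First I would fix an iteration $k$ and a constant $\alpha\geq 0$ and apply Assumption~\ref{Ass3}, i.e.\ inequality \eqref{eq:Ass3-lc}, with the choice $v = h_{k,\alpha}e_i^k$. Since $\|e_i^k\|=1$, this gives $\| {\cal G}(p_k, h_{k,\alpha}e_i^k) - \grad f(p_k) - \hess f(p_k)[h_{k,\alpha}e_i^k]\| \leq \tfrac{L'}{2}h_{k,\alpha}^2$. Dividing by $h_{k,\alpha}>0$ and recalling the definition \eqref{eq:algAdeh} of $\A_{k,\alpha}$, we get the column-wise bound $\|(\A_{k,\alpha}-\hess f(p_k))[e_i^k]\| \leq \tfrac{L'}{2}h_{k,\alpha}$ for every $i\in\{1,\dots,n\}$. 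Writing $E \coloneqq \A_{k,\alpha}-\hess f(p_k)$ and using that $\mathfrak{B}^k$ is orthonormal, we pass to the operator norm via the Frobenius norm: $\|E\|_{\op} \leq \|E\|_F = \big(\sum_{i=1}^n \|E[e_i^k]\|^2\big)^{1/2} \leq \sqrt{n}\max_i\|E[e_i^k]\| \leq \sqrt{n}\,\tfrac{L'}{2}h_{k,\alpha}$, which is the source of the $\sqrt{n}$ factor appearing in $\kappa_{\B}$.

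Next I would deal with the symmetrization in \eqref{eq:algBsaac}. Since the Riemannian Hessian is self-adjoint, $\hess f(p_k) = \hess f(p_k)^*$, so $\B_{k,\alpha} - \hess f(p_k) = \tfrac12\big(E + E^*\big)$; because taking adjoints preserves the operator norm, $\|E^*\|_{\op} = \|E\|_{\op}$, the triangle inequality then yields $\|\hess f(p_k) - \B_{k,\alpha}\|_{\op} \leq \|E\|_{\op} \leq \sqrt{n}\,\tfrac{L'}{2}h_{k,\alpha}$. Finally I would substitute $h_{k,\alpha} = \|v_{k-1}\|/(2^{\alpha-1}\sigma_k)$ from \eqref{eq:defhka} and use $\sigma_k \geq \sigma_1$ (Corollary~\ref{cor:gooddef}) to obtain $\|\hess f(p_k) - \B_{k,\alpha}\|_{\op} \leq \tfrac{\sqrt{n}L'}{2\sigma_1}\cdot\tfrac{\|v_{k-1}\|}{2^{\alpha-1}}$, which is exactly the second inequality of \eqref{eq:grahess.alg} with $\kappa_{\B} = \sqrt{n}L'/(2\sigma_1)$.

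The argument is essentially routine; the only point that requires any care is that $\A_{k,\alpha}$ is in general not self-adjoint, so one genuinely needs the symmetrization step and the observation that it does not enlarge the distance to the (self-adjoint) operator $\hess f(p_k)$ — this is where self-adjointness of $\hess f(p_k)$ together with the identity $\|E^*\|_{\op}=\|E\|_{\op}$ do the work. The passage from the column-wise estimate to the operator-norm estimate through the Frobenius norm is standard, and it is precisely what forces the dimensional factor $\sqrt{n}$ into the constant.
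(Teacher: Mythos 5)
Your proof is correct and follows the same overall strategy as the paper: apply Assumption~\ref{Ass3} at $v=h_{k,\alpha}e_i^k$ to get the column-wise bound $\tfrac{L'}{2}h_{k,\alpha}$, pick up the factor $\sqrt{n}$ from the orthonormal basis, absorb the symmetrization using self-adjointness of $\hess f(p_k)$, and finish with \eqref{eq:defhka} and $\sigma_k\geq\sigma_1$ from Corollary~\ref{cor:gooddef}. The one place you genuinely diverge is the order of operations for the adjoint term: the paper first reduces $\|\B_{k,\alpha}-\hess f(p_k)\|_{\op}$ to $\sqrt{n}\max_i\|(\B_{k,\alpha}-\hess f(p_k))[e_i^k]\|$ (using self-adjointness of $\B_{k,\alpha}-\hess f(p_k)$ to swap arguments in the inner product) and then tries to replace $\|(\B_{k,\alpha}-\hess f(p_k))[e_i^k]\|$ by $\|(\A_{k,\alpha}-\hess f(p_k))[e_i^k]\|$ column by column, which rests on the vector-wise identity $\|E^*[e_i^k]\|=\|E[e_i^k]\|$ — a claim that does not hold in general for a single vector. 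You instead do the symmetrization at the level of operator norms, where $\|E^*\|_{\op}=\|E\|_{\op}$ is valid, and only then pass to columns via the Frobenius norm; this yields the same constant $\kappa_{\B}=\sqrt{n}L'/(2\sigma_1)$ while avoiding the questionable step, so your route is the cleaner of the two.
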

\begin{proof}
 It follows from the definition of $\| \cdot \|_{\op}$ along with the orthonormality of the basis $\mathfrak{B}^k$, self-adjoint $\hess f(p_k)$ and $\B_{k,\alpha}$  and \eqref{eq:algBsaac} that
     \begin{align*}
	      \MoveEqLeft
		\left\| \B_{k,\alpha}-\hess f(p_k)  \right\|_{\op}  \\
		& =  \sup\{  \| \left( \B_{k,\alpha}-\hess f(p_k) \right) [ v ] \| \colon  v\in \T_{p_k}{\cal M}, \, \|v\|=1  \}\\
		& =  \sup\left\{  \left\| \sum_{i=1}^{n} \left\langle \left( \B_{k,\alpha}-\hess f(p_k) \right)  [v], e_{i}^{k}  \right\rangle e_{i}^{k}  \right\| \colon  v\in \T_{p_k}{\cal M}, \, \|v\|=1  \right\}\\
		& =  \sup\left\{  \left( \sum_{i=1}^{n} \left\langle \left( \B_{k,\alpha}-\hess f(p_k) \right)  [v], e_{i}^{k}  \right\rangle^2 \right)^{\frac{1}{2}}  \colon  v\in \T_{p_k}{\cal M}, \, \|v\|=1  \right\}\\
		& =  \sup\left\{  \left( \sum_{i=1}^{n} \left\langle \left( \B_{k,\alpha}-\hess f(p_k) \right)  [e_{i}^{k}] , v \right\rangle^2 \right)^{\frac{1}{2}}  \colon  v\in \T_{p_k}{\cal M}, \, \|v\|=1  \right\}\\
		& \leq  \sup\left\{  \left( \sum_{i=1}^{n} \left\| \left( \B_{k,\alpha}-\hess f(p_k) \right)  [e_{i}^{k}] \right\|^2 \| v \|^2 \right)^{\frac{1}{2}}  \colon  v\in \T_{p_k}{\cal M}, \, \|v\|=1  \right\}\\
		& \leq  \sqrt{n} \max_{i=1,\ldots,n}  \left\| \left( \B_{k,\alpha}-\hess f(p_k) \right)  [e_{i}^{k}] \right\|.
	\end{align*}
	Furthermore, from \eqref{eq:algBsaac}, it comes that
		\begin{align*}
		\left\| \left( \B_{k,\alpha} - \hess f(p_k) \right)[e_{i}^{k}] \right\| & = \frac{1}{2}
		\left\| \left( \A_{k,\alpha} - \hess f(p_k) \right) [e_{i}^{k}]   +  \left( \A_{k,\alpha} -\hess f(p_k) \right)^*[e_{i}^{k}]   \right\| \\
                & = \frac{1}{2}
		\left(  \left\| \left( \A_{k,\alpha} - \hess f(p_k) \right) [e_{i}^{k}]  \right\| + \left\|  \left( \A_{k,\alpha} -\hess f(p_k) \right)^*[e_{i}^{k}]   \right\| \right) \\
	       & =\left\| \left( \A_{k,\alpha}-\hess f(p_k) \right) [e_{i}^{k}] \right\|,
	\end{align*}
	for all $i \in \{ 1, \ldots , n \}$.
 Therefore, considering Assumption~\ref{Ass3} with $v=h_{k,\alpha}e^k_i$, \eqref{eq:defhka}, \eqref{eq:algAdeh} and Corollary~\ref{cor:gooddef}, we obtain
        \begin{align*}
		\left\| \B_{k,\alpha}-\hess f(p_k)\right\|_{\op}
		& \leq  \sqrt{n} \max_{i=1,\ldots,n}  \left\|( \A_{k,\alpha}-\hess f(p_k))[e^k_{i}] \right\| \\
		& = \frac{ \sqrt{n} }{ h_{k,\alpha} } \max_{i=1,\ldots,n}  \left\| {\cal G}(p_k,h_{k,\alpha} e^k_i) - \grad f(p_k)         - \hess f(p_k)[h_{k,\alpha}e^k_i] \right\| \\
		&\leq \frac{ \sqrt{n} L'}{ 2h_{k,\alpha} } \max_{i=1,\ldots,n}  \left\| h_{k,\alpha}e^k_i \right\|^2  = \frac{ \sqrt{n} L' }{ 2 \sigma_{k} } \frac{\| v_{k-1} \|}{2^{\alpha-1}}  \leq  \frac{ \kappa_{\B} }{2^{\alpha-1}} \| v_{k-1} \|.
	\end{align*}
	This completes the proof.
\end{proof}

Unlike the previous result, the approximation $\B_{k,\alpha}$ provided in the following result relies solely on evaluations of the objective function
$f$ but not the evaluation of the gradient of $f$. 

\begin{proposition} \label{lem:B-Hess.sg}
Consider Algorithm 1 with a general second-order retraction (case 1) and with $\R=\exp$ (case 2). For each iteration $k$ and constant $\alpha \geq 0$, let $\B_{k,\alpha} \colon  \T_{p_k} {\cal M} \to  \T_{p_k} {\cal M}$ be the operator defined in \eqref{eq:algBsaac} with $\A_{k,\alpha} \colon  \T_{p_k} {\cal M} \to  \T_{p_k} {\cal M}$ characterized by
\begin{equation}\label{eq:mappauxGc1c2}
	\langle \A_{k,\alpha}[e^k_i] , e^k_j \rangle =
	\begin{cases}
	\frac{  \hat{f}_k( h_{k,\alpha} e^{k}_{i} + h_{k,\alpha} e^{k}_{j} )
		- \hat{f}_k( h_{k,\alpha} e^{k}_{i} )
		- \hat{f}_k( h_{k,\alpha} e^{k}_{j} )
		+\hat{f}_k(0) }{(h_{k,\alpha})^2},  \quad \mbox{for case 1},
		\\
		\\
		\frac{  f \left( \exp_{q_{k,\alpha}^{i} } \left(  \p_{ v_{k,\alpha}^{i} } v_{k,\alpha}^{j}  \right) \right)
		- f( q_{k,\alpha}^{i} )
		- f( q_{k,\alpha}^{j} )
		+f(p_k) }{(h_{k,\alpha})^2}, \quad \mbox{for case 2},
			\end{cases}
\end{equation}
         for every $i,j \in \{ 1, \ldots , n \}$, where the notations
          $q_{k,\alpha}^{i}   \coloneqq \exp_{p_k} v_{k,\alpha}^{i} $ and
		$v_{k,\alpha}^{i} \coloneqq h_{k,\alpha} e^{k}_{i}$ are assumed for every $i \in \{ 1, \ldots , n \}$. Additionally, assume that $\nabla^2 \hat{f}_k$ is $L$-Lipschitz for case 1, and assume that $\hess f$ is $L$-Lipschitz for case 2. Then the second inequality of \eqref{eq:grahess.alg} is always satisfied for $ \kappa_{\B} = L( 5n + 3\sqrt{n} )/6 \sigma_1 $ in both cases.
\end{proposition}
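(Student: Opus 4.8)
The plan is to follow the template of the proof of Proposition~\ref{prop:B-Hess}, reducing the operator-norm bound to entry-wise estimates in the orthonormal basis $\mathfrak{B}^k=\{e^k_1,\dots,e^k_n\}$ and then estimating those entries by Taylor expansion. Set $S:=\B_{k,\alpha}-\hess f(p_k)$ and $S_{ij}:=\langle S[e^k_i],e^k_j\rangle$. Since $\hess f(p_k)$ is self-adjoint and $\B_{k,\alpha}=(\A_{k,\alpha}+\A_{k,\alpha}^{*})/2$, one has $S_{ij}=\tfrac12\big(\langle(\A_{k,\alpha}-\hess f(p_k))[e^k_i],e^k_j\rangle+\langle(\A_{k,\alpha}-\hess f(p_k))[e^k_j],e^k_i\rangle\big)$, so it suffices to bound $\langle(\A_{k,\alpha}-\hess f(p_k))[e^k_i],e^k_j\rangle$ for every $i,j$. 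Using $\|S\|_{\op}\le\sqrt n\,\max_i\|S[e^k_i]\|$ together with $\|S[e^k_i]\|=\big(\sum_j S_{ij}^2\big)^{1/2}\le|S_{ii}|+\sqrt{n-1}\,\max_{j\ne i}|S_{ij}|$, any entry bounds of the form $|S_{ii}|\le c_1 h_{k,\alpha}$ (diagonal) and $|S_{ij}|\le c_2 h_{k,\alpha}$ for $i\ne j$ (off-diagonal) give $\|S\|_{\op}\le(\sqrt n\,c_1+n\,c_2)h_{k,\alpha}$; then \eqref{eq:defhka} and $\sigma_k\ge\sigma_1$ (Corollary~\ref{cor:gooddef}) turn $h_{k,\alpha}$ into $\|v_{k-1}\|/(2^{\alpha-1}\sigma_1)$, and the values found below, $c_1=L/2$ and $c_2=5L/6$, give precisely $(\sqrt n\,c_1+n\,c_2)/\sigma_1=L(5n+3\sqrt n)/(6\sigma_1)=\kappa_{\B}$.

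For case~1, a second-order retraction gives $\hess f(p_k)=\nabla^2\hat{f}_k(0)$, and $L$-Lipschitzness of $\nabla^2\hat{f}_k$ furnishes, through Lemma~\ref{lem:ineq.hess.lips} applied to $\hat{f}_k$ on the vector space $\T_{p_k}{\cal M}$ (equivalently Assumption~\ref{Ass2}), the third-order estimate $\big|\hat{f}_k(v)-\hat{f}_k(0)-\langle\nabla\hat{f}_k(0),v\rangle-\tfrac12\langle\nabla^2\hat{f}_k(0)[v],v\rangle\big|\le\tfrac{L}{6}\|v\|^3$. Expanding $\hat{f}_k$ in this way at the four arguments $h_{k,\alpha}(e^k_i+e^k_j)$, $h_{k,\alpha}e^k_i$, $h_{k,\alpha}e^k_j$ and $0$, the first-order contributions cancel identically and the second-order ones telescope to $h_{k,\alpha}^2\langle\nabla^2\hat{f}_k(0)[e^k_i],e^k_j\rangle$, so $\langle\A_{k,\alpha}[e^k_i],e^k_j\rangle-\langle\hess f(p_k)[e^k_i],e^k_j\rangle$ is a remainder of size at most $\tfrac{L}{6}h_{k,\alpha}\big(\|e^k_i+e^k_j\|^3+2\big)$; for $i\ne j$ this equals $\tfrac{L}{6}h_{k,\alpha}(2\sqrt2+2)\le\tfrac{5L}{6}h_{k,\alpha}$, while the diagonal, handled by a centered second difference, has remainder $\le\tfrac{L}{3}h_{k,\alpha}\le\tfrac{L}{2}h_{k,\alpha}$. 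Plugging $c_1=L/2$ and $c_2=5L/6$ into the first paragraph closes case~1.

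For case~2 ($\R=\exp$) the point is that \eqref{eq:mappauxGc1c2} is obtained from the gradient-based approximation of Proposition~\ref{prop:B-Hess} with ${\cal G}$ as in \eqref{eq:cG-exp}, i.e. $\langle\A_{k,\alpha}[e^k_i],e^k_j\rangle=\tfrac1{h_{k,\alpha}}\big(\langle\p_{v_{k,\alpha}^i}^{-1}\grad f(q_{k,\alpha}^i),e^k_j\rangle-\langle\grad f(p_k),e^k_j\rangle\big)$, where each Riemannian directional derivative $\langle\grad f(q),w\rangle=\tfrac{d}{dt}f(\exp_q(tw))\big|_{0}$ has been replaced by a forward difference of $f$ along the corresponding geodesic, and $q_{k,\alpha}^i=\exp_{p_k}(v_{k,\alpha}^i)$, $v_{k,\alpha}^i=h_{k,\alpha}e^k_i$; below I abbreviate $v^i=v_{k,\alpha}^i$, $q^i=q_{k,\alpha}^i$. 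To bound $\langle(\A_{k,\alpha}-\hess f(p_k))[e^k_i],e^k_j\rangle$ I would: (i) use the second inequality of Lemma~\ref{lem:ineq.hess.lips} to write $\p_{v^i}^{-1}\grad f(q^i)=\grad f(p_k)+\hess f(p_k)[v^i]+R_1$ with $\|R_1\|\le\tfrac{L}{2}\|v^i\|^2$; (ii) use the first inequality of Lemma~\ref{lem:ineq.hess.lips}, once at $p_k$ and once at $q^i$, to replace $f(q^j)-f(p_k)$ and $f(\exp_{q^i}(\p_{v^i}v^j))-f(q^i)$ by their second-order expansions up to errors $\le\tfrac{L}{6}\|v^j\|^3$; and (iii) use the definition of $L$-Lipschitz continuous Hessian to replace the parallel-transported $\hess f(q^i)$ by $\hess f(p_k)$ at a cost bounded by $L\|v^i\|\,\|v^j\|^2$. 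After these substitutions the linear terms and the ``self'' quadratic terms cancel, leaving $\langle\hess f(p_k)[v^i],v^j\rangle=h_{k,\alpha}^2\langle\hess f(p_k)[e^k_i],e^k_j\rangle$ plus a remainder of order $Lh_{k,\alpha}^3$, hence an $O(Lh_{k,\alpha})$ per-entry error; for $i=j$ one observes that $\exp_{q^i}(\p_{v^i}v^i)=\exp_{p_k}(2v^i)$, because $\p_{v^i}v^i$ is the velocity at the endpoint of the geodesic $t\mapsto\exp_{p_k}(tv^i)$, so that entry reduces to a two-step geodesic difference estimated as in case~1. Collecting the resulting constants as in the first paragraph yields $\kappa_{\B}$ of the claimed form.

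The main obstacle I expect is the bookkeeping in case~2: one must keep straight which parallel transport acts on which tangent vector (the $\p_{v^i}$ carrying $v^j$ into $\T_{q^i}{\cal M}$ versus the one hidden in ${\cal G}$), use repeatedly that parallel transport is an isometry so that $\|\p_{v^i}v^j\|=\|v^j\|$ in the remainder estimates, and --- crucially --- invoke the \emph{definition} of $L$-Lipschitz continuous Hessian, not merely Lemma~\ref{lem:ineq.hess.lips}, when comparing $\hess f(q^i)$ with $\hess f(p_k)$. Making each of these error contributions genuinely of order $Lh_{k,\alpha}^3$ with small enough constants is where the actual effort lies; case~1, by contrast, is a direct four-point Taylor computation whose only subtleties are the $\sqrt2$ from $\|e^k_i+e^k_j\|$ and the separate, easier treatment of the diagonal entries.
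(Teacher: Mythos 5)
Your overall skeleton (reduce $\|\B_{k,\alpha}-\hess f(p_k)\|_{\op}$ to entry-wise estimates in $\mathfrak{B}^k$ and Taylor-expand the four-point differences) is reasonable, and your off-diagonal estimate $|S_{ij}|\le \tfrac{L}{6}(2\sqrt2+2)h\le\tfrac{5L}{6}h$ (writing $h$ for $h_{k,\alpha}$) is correct. The gap is the diagonal. For $i=j$, formula \eqref{eq:mappauxGc1c2} prescribes the \emph{forward} second difference $[\hat f_k(2h e^k_i)-2\hat f_k(h e^k_i)+\hat f_k(0)]/h^2$, not a centered one, and its deviation from $\langle\nabla^2\hat f_k(0)[e^k_i],e^k_i\rangle$ cannot be bounded by $\tfrac L2 h$: already for $\cM=\mR$ with $\hat f_k(v)=v^3/6$ (so $L=1$) the second difference equals $h$ while $\nabla^2\hat f_k(0)=0$, so $|S_{ii}|=Lh$. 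Recentering the expansion at $he^k_i$, as you suggest, produces the Hessian at $he^k_i$, and transferring it back to the origin costs an extra $Lh$ by Lipschitz continuity, so the honest diagonal constant is $\tfrac43 L$ (or $\tfrac53 L$ by Taylor at the origin), not $\tfrac L2$. With a correct $c_1\ge L$ your aggregation $\|S\|_{\op}\le(\sqrt n\,c_1+n\,c_2)h$ overshoots the claimed $\kappa_{\B}=L(5n+3\sqrt n)/(6\sigma_1)$; it would still yield \emph{some} admissible constant for \eqref{eq:grahess.alg}, but not the value the proposition asserts.

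The reason the paper obtains the coefficient $3\sqrt n/6$ rather than a $\sqrt n\,c_1$ coming from the diagonal is a different decomposition, which you should adopt. Per column $i$, write $h\A_{k,\alpha}[e^k_i]-\hess f(p_k)[v^i_{k,\alpha}]$ as the sum of (a) the vector $h\A_{k,\alpha}[e^k_i]-{\cal G}(p_k,v^i_{k,\alpha})+\grad f(p_k)$ and (b) the vector ${\cal G}(p_k,v^i_{k,\alpha})-\grad f(p_k)-\hess f(p_k)[v^i_{k,\alpha}]$, with ${\cal G}(p_k,\cdot)=\nabla\hat f_k(\cdot)$ in case 1 and ${\cal G}(p_k,v)=\p_{v}^{-1}\grad f(\exp_{p_k}v)$ in case 2. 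Term (b) is bounded \emph{in norm} by $\tfrac L2 h^2$ (Lipschitzness of $\nabla^2\hat f_k$, resp.\ Lemma~\ref{lem:ineq.hess.lips}); after the outer $\sqrt n$ from the proof of Proposition~\ref{prop:B-Hess} it contributes $\tfrac L2\sqrt n\,h$ to the operator norm, which is where the $3\sqrt n$ actually comes from. Term (a) is bounded entry-wise by $\tfrac{5L}{6}h^2$ \emph{uniformly} in $i,j$, including $i=j$, because one compares the finite difference against the exact increment $\langle{\cal G}(p_k,v^i_{k,\alpha}),v^j_{k,\alpha}\rangle-\langle\grad f(p_k),v^j_{k,\alpha}\rangle$ rather than against the Hessian directly; this supplies the $5n$ part. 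Your case-2 steps (i)--(iii) already contain every ingredient of this argument, so the defect is only in how the pieces are assembled, but as written the proof does not establish the stated $\kappa_{\B}$.
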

\begin{proof}
Take an iteration $k$ and a constant $\alpha\geq 0$. By following the idea of the proof of Proposition \ref{prop:B-Hess}, we can conclude that
        \begin{align}
		\left\| \B_{k,\alpha}-\hess f(p_k)\right\|_{\op}
		& \leq  \sqrt{n} \max_{i=1,\ldots,n}  \left\|( \A_{k,\alpha}-\hess f(p_k))[e^k_{i}] \right\| \nonumber \\
		& = \frac{ \sqrt{n} }{ h_{k,\alpha} } \max_{i=1,\ldots,n}  \left\| h_{k,\alpha} \A_{k,\alpha}[e^k_{i}] -\hess f(p_k)[v_{k,\alpha}^{i}] \right\| \label{eq:proisag}
	\end{align}
	for both cases. Given that $\nabla^2 \hat{f}_k$ is $L$-Lipschitz for case 1 and $\hess f$ is $L$-Lipschitz for case 2, it follows from Remark~\ref{rem:Ass3} with $v = v_{k,\alpha}^{i} $ that
	\begin{equation}\label{eq:Ass3-lc.prop3c}
\left\| {\cal G}( p_k ,v_{k,\alpha}^{i}  )  - \grad f(p_k) - \hess f(p_k)[ v_{k,\alpha}^{i} ]  \right\| \leq\dfrac{L}{2}\| v_{k,\alpha}^{i} \|^{2},
        \end{equation}
        where
        \begin{equation}\label{eq:cG-exp.c1ec2}
	{\cal G}( p_k ,v_{k,\alpha}^{i}  ) = 	
	\begin{cases}
	\nabla \hat{f}_k( v_{k,\alpha}^{i}  ),  \quad \mbox{for case 1},
		\\
		\\
	\p_{ v_{k,\alpha}^{i}  }^{-1} \grad f(\exp_{p_k} v_{k,\alpha}^{i}  ), \quad \mbox{for case 2}.
	\end{cases}
	\end{equation}
	Then, by applying \eqref{eq:Ass3-lc.prop3c}, along with the definition of $v_{k,\alpha}^{i}$ and the orthonormality of $\mathfrak{B}^k$, we find
	\begin{align}\label{eq:pro.reto(i)}
	\MoveEqLeft
	\left\| h_{k,\alpha} \A_{k,\alpha}[e^k_{i}] - \hess f(p_k)[ v_{k,\alpha}^{i} ]  \right\| \nonumber\\
	& = \left\| h_{k,\alpha} \A_{k,\alpha}[e^k_{i}] - {\cal G}( p_k ,v_{k,\alpha}^{i} )
	+ \grad f(p_k)
	+ {\cal G}( p_k ,v_{k,\alpha}^{i} ) - \grad f(p_k) -\hess f(p_k)[ v_{k,\alpha}^{i} ]\right\| \nonumber  \\
	& \leq  \left\| h_{k,\alpha} \A_{k,\alpha}[e^k_{i}] - {\cal G}( p_k ,v_{k,\alpha}^{i} ) + \grad f(p_k) \right\|
	+ \left\| {\cal G}( p_k ,v_{k,\alpha}^{i} ) - \grad f(p_k) -\hess f(p_k)[ v_{k,\alpha}^{i} ] \right\| \nonumber\\
	& \leq  \left\| \sum_{j=1}^{n} \left\langle h_{k,\alpha} \A_{k,\alpha}[e^k_{i}] - {\cal G}( p_k ,v_{k,\alpha}^{i} ) + \grad f(p_k) , e^k_{j}  \right\rangle e^k_{j}  \right\|
	+ \frac{L}{2} \| v_{k,\alpha}^{i} \|^2 \nonumber \\
	& \leq \sqrt{n}  \max_{j=1,\ldots,n}  \left| \left\langle h_{k,\alpha} \A_{k,\alpha}[e^k_{i}] - {\cal G}( p_k ,v_{k,\alpha}^{i} ) + \grad f(p_k) , e^k_{j}  \right\rangle  \right|
	+ \frac{L}{2} ( h_{k,\alpha})^2  \nonumber \\
	& = \frac{ \sqrt{n} }{ h_{k,\alpha} }  \max_{j=1,\ldots,n}  \left| \underbrace{ (h_{k,\alpha})^2 \left\langle  \A_{k,\alpha}[e^k_{i}]  , e^k_{j}  \right\rangle - \left\langle {\cal G}( p_k ,v_{k,\alpha}^{i} ) , v_{k,\alpha}^{j}  \right\rangle
	+ \left\langle  \grad f(p_k) , v_{k,\alpha}^{j}  \right\rangle }_{ (i) } \right|
	+ \frac{L}{2} ( h_{k,\alpha} )^2.
	\end{align}
	Note that, as a consequence of \eqref{eq:mappauxGc1c2} and \eqref{eq:cG-exp.c1ec2}, the expression $(i)$ can be rephrased as
	\begin{align*}
	\MoveEqLeft
	(h_{k,\alpha})^2 \left\langle  \A_{k,\alpha}[e^k_{i}]  , e^k_{j}  \right\rangle - \left\langle \nabla\hat{f}_k( h_{k,\alpha}e^k_{i} ) , h_{k,\alpha} e_{j}^{k} \right\rangle
	+ \left\langle  \nabla\hat{f}_k(0) , h_{k,\alpha} e_{j}^{k}  \right\rangle  \\
	& = \left[ \hat{f}_k( h_{k,\alpha} e^{k}_{i} + h_{k,\alpha} e^{k}_{j} )
		- \hat{f}_k( h_{k,\alpha} e^{k}_{i} )  - \left\langle \nabla\hat{f}_k( h_{k,\alpha}e^k_{i} ) , h_{k,\alpha} e_{j}^{k} \right\rangle - \frac{1}{2}\left\langle \nabla^2\hat{f}_k( h_{k,\alpha}e^k_{i} )[h_{k,\alpha} e_{j}^{k}] , h_{k,\alpha} e_{j}^{k} \right\rangle \right]\\
	&\quad	- \left[ \hat{f}_k( h_{k,\alpha} e^{k}_{j} )
		- \hat{f}_k(0)
	- \left\langle  \nabla\hat{f}_k(0) , h_{k,\alpha} e_{j}^{k}  \right\rangle
	- \frac{1}{2}\left\langle  \nabla^2 \hat{f}_k(0)[h_{k,\alpha} e_{j}^{k}] , h_{k,\alpha} e_{j}^{k}  \right\rangle \right] \\
	& \quad +\frac{1}{2} \left\langle \left( \nabla^2 \hat{f}_k(h_{k,\alpha} e_{i}^{k} ) - \nabla^2 \hat{f}_k(0) \right)[h_{k,\alpha} e_{j}^{k}] , h_{k,\alpha} e_{j}^{k}  \right\rangle,\\
	\end{align*}
	for case 1, and as
	\begin{align*}
	\MoveEqLeft
	(h_{k,\alpha})^2 \left\langle  \A_{k,\alpha}[e^k_{i}]  , e^k_{j}  \right\rangle - \left\langle \p_{ v_{k,\alpha}^{i} }^{-1} \grad f(q_{k,\alpha}^{i} ) , v_{k,\alpha}^{j}  \right\rangle
	+ \left\langle  \grad f(p_k) , v_{k,\alpha}^{j}  \right\rangle & = f \left( \exp_{q_{k,\alpha}^{i} } \left(  \p_{ v_{k,\alpha}^{i} } v_{k,\alpha}^{j}  \right) \right)
		- f( q_{k,\alpha}^{i} )
		- f( q_{k,\alpha}^{j} )
		+f(p_k)
	- \left\langle \p_{ v_{k,\alpha}^{i} }^{-1} \grad f(q_{k,\alpha}^{i} ) , v_{k,\alpha}^{j}  \right\rangle
	+ \left\langle  \grad f(p_k) , v_{k,\alpha}^{j}  \right\rangle \\
		& = \left[ f \left( \exp_{q_{k,\alpha}^{i} } \left(  \p_{ v_{k,\alpha}^{i} } v_{k,\alpha}^{j}  \right) \right)
		- f( q_{k,\alpha}^{i} ) - \left\langle  \grad f( q_{k,\alpha}^{i} ) , \p_{ v_{k,\alpha}^{i} } v_{k,\alpha}^{j}  \right\rangle
		- \frac{1}{2} \left\langle \hess f(q_{k,\alpha}^{i})[\p_{ v_{k,\alpha}^{i} } v_{k,\alpha}^{j} ] , \p_{ v_{k,\alpha}^{i} } v_{k,\alpha}^{j}  \right\rangle \right] \\
		& \quad - \left[ f( q_{k,\alpha}^{j} )
		- f(p_k)
	- \left\langle  \grad f(p_k) , v_{k,\alpha}^{j}  \right\rangle -  \frac{1}{2} \left\langle \hess f( p_k )[ v_{k,\alpha}^{j} ] , v_{k,\alpha}^{j}  \right\rangle \right] \\
	       & \quad + \frac{1}{2} \left\langle \left( \p^{-1}_{ v_{k,\alpha}^{i} } \circ \hess f(q_{k,\alpha}^{i})\circ \p_{ v_{k,\alpha}^{i} }  - \hess f( p_k ) \right)v_{k,\alpha}^{j} ,  v_{k,\alpha}^{j}  \right\rangle,
	       	\end{align*}
	for case 2. Applying the modulus to both sides, utilizing the triangular inequality, considering that both $\nabla^2 \hat{f}_k$ and $\hess f$ are $L$-Lipschitz, and employing Lemma ~\ref{lem:ineq.hess.lips}, we obtain
	\begin{align*}
	\MoveEqLeft
	\left| (h_{k,\alpha})^2 \left\langle  \A_{k,\alpha}[e^k_{i}]  , e^k_{j}  \right\rangle - \left\langle \nabla\hat{f}_k( h_{k,\alpha}e^k_{i} ) , h_{k,\alpha} e_{j}^{k} \right\rangle
	+ \left\langle  \nabla\hat{f}_k(0) , h_{k,\alpha} e_{j}^{k}  \right\rangle \right|  \\
	& \leq \left| \hat{f}_k( h_{k,\alpha} e^{k}_{i} + h_{k,\alpha} e^{k}_{j} )
		- \hat{f}_k( h_{k,\alpha} e^{k}_{i} )  - \left\langle \nabla\hat{f}_k( h_{k,\alpha}e^k_{i} ) , h_{k,\alpha} e_{j}^{k} \right\rangle - \frac{1}{2}\left\langle \nabla^2\hat{f}_k( h_{k,\alpha}e^k_{i} )[h_{k,\alpha} e_{j}^{k}] , h_{k,\alpha} e_{j}^{k} \right\rangle \right| \\
	&\quad	+ \left| \hat{f}_k( h_{k,\alpha} e^{k}_{j} )
		- \hat{f}_k(0)
	- \left\langle  \nabla\hat{f}_k(0) , h_{k,\alpha} e_{j}^{k}  \right\rangle
	- \frac{1}{2}\left\langle  \nabla^2 \hat{f}_k(0)[h_{k,\alpha} e_{j}^{k}] , h_{k,\alpha} e_{j}^{k}  \right\rangle \right| \\
	& \quad + \frac{1}{2} \left|  \left\langle \left( \nabla^2 \hat{f}_k(h_{k,\alpha} e_{i}^{k} ) - \nabla^2 \hat{f}_k(0) \right)[h_{k,\alpha} e_{j}^{k}] , h_{k,\alpha} e_{j}^{k}  \right\rangle \right| \\
	& \leq \frac{L}{6}\| h_{k,\alpha} e_{j}^{k} \|^3 + \frac{L}{6}\| h_{k,\alpha} e_{j}^{k} \|^3
	+ \frac{1}{2} \left\| \nabla^2 \hat{f}_k(h_{k,\alpha} e_{i}^{k} ) - \nabla^2 \hat{f}_k(0) \right\|_{\op} \|h_{k,\alpha} e_{j}^{k} \| \| h_{k,\alpha} e_{j}^{k}  \|  \\
	& \leq  \frac{L}{3}\| h_{k,\alpha} e_{j}^{k} \|^3
	+ \frac{L}{2}  \| h_{k,\alpha} e_{j}^{k}  \|^3= \frac{5L}{6} ( h_{k,\alpha} )^3
	\end{align*}
	and
		\begin{align*}
	\MoveEqLeft
	\left| (h_{k,\alpha})^2 \left\langle  \A_{k,\alpha}[e^k_{i}]  , e^k_{j}  \right\rangle - \left\langle \p_{ v_{k,\alpha}^{i} }^{-1} \grad f(q_{k,\alpha}^{i} ) , v_{k,\alpha}^{j}  \right\rangle
	+ \left\langle  \grad f(p_k) , v_{k,\alpha}^{j}  \right\rangle \right|  \\
	       & \leq \left| f \left( \exp_{q_{k,\alpha}^{i} } \left(  \p_{ v_{k,\alpha}^{i} } v_{k,\alpha}^{j}  \right) \right)
		- f( q_{k,\alpha}^{i} ) - \left\langle  \grad f( q_{k,\alpha}^{i} ) , \p_{ v_{k,\alpha}^{i} } v_{k,\alpha}^{j}  \right\rangle
		- \frac{1}{2} \left\langle \hess f(q_{k,\alpha}^{i})[\p_{ v_{k,\alpha}^{i} } v_{k,\alpha}^{j} ] , \p_{ v_{k,\alpha}^{i} } v_{k,\alpha}^{j}  \right\rangle \right| \\
		& \quad + \left| f( q_{k,\alpha}^{j} )
		- f(p_k)
	- \left\langle  \grad f(p_k) , v_{k,\alpha}^{j}  \right\rangle -  \frac{1}{2} \left\langle \hess f( p_k )[ v_{k,\alpha}^{j} ] , v_{k,\alpha}^{j}  \right\rangle \right| \\
	       & \quad + \frac{1}{2} \left|  \left\langle \left( \p^{-1}_{ v_{k,\alpha}^{i} } \circ \hess f(q_{k,\alpha}^{i})\circ \p_{ v_{k,\alpha}^{i} }  - \hess f( p_k ) \right)v_{k,\alpha}^{j} ,  v_{k,\alpha}^{j}  \right\rangle \right| \\
	       & \leq \frac{L}{6} \| \p_{ v_{k,\alpha}^{i} } v_{k,\alpha}^{j} \|^3  + \frac{L}{6} \| v_{k,\alpha}^{j} \|^3 	+ \frac{1}{2} \left\| \p^{-1}_{ v_{k,\alpha}^{i} } \circ \hess f(q_{k,\alpha}^{i})\circ \p_{ v_{k,\alpha}^{i} }  - \hess f( p_k ) \right\|_{\op} \|v_{k,\alpha}^{j} \|^2 \\
	       & \leq \frac{L}{6} \| v_{k,\alpha}^{j} \|^3 + \frac{L}{6} \| v_{k,\alpha}^{j} \|^3 + \frac{L}{2} \| v_{k,\alpha}^{j} \|^3
	= \frac{5L}{6} (h_{k,\alpha})^3.
	\end{align*}
	By combining the above calculations with \eqref{eq:proisag} and  \eqref{eq:pro.reto(i)}, in any case, we have
	  $$
      \left\| \B_{k,\alpha}-\hess f(p_k)\right\|_{\op}  \leq
      \frac{  \sqrt{n}  }{ h_{k,\alpha} }  \left(   \frac{ \sqrt{n} }{ h_{k,\alpha} }   \frac{5L}{6} (h_{k,\alpha})^3
	+ \frac{L}{2} ( h_{k,\alpha} )^2  \right) = \frac{L( 5n + 3\sqrt{n} )}{6 \sigma_k }  \frac{\| v_{k-1} \|}{2^{\alpha -1} }.
      $$
      Therefore, the conclusion of the proof follows from
      Corollary~\ref{cor:gooddef}.	
\end{proof}

        \begin{remark}
        By using the orthonormality of the basis $\mathfrak{B}^k = \{ e^k_1,\ldots, e^k_n \}$ and \eqref{eq:algBsaac}, we obtain
\begin{align*}
\B_{k,\alpha} v & = \sum_{i=1}^{n} \langle \B_{k,\alpha} v, e_i \rangle e_i  \\
& = \sum_{i=1}^{n}  \sum_{j=1}^{n} \langle v, e_j \rangle \left\langle \B_{k,\alpha}  e_j  , e_i \right\rangle e_i \\
& = \frac{1}{2}  \sum_{i=1}^{n}  \sum_{j=1}^{n} \langle v, e_j \rangle  \left\langle  (\A_{k,\alpha} + \A^{*}_{k,\alpha} ) e_j , e_i \right\rangle e_i  \\
& = \frac{1}{2}  \sum_{i=1}^{n}  \sum_{j=1}^{n} \langle v, e_j \rangle \left(  \left\langle  \A_{k,\alpha} e_j , e_i \right\rangle +  \left\langle      \A_{k,\alpha} e_i , e_j \right\rangle \right) e_i,
\end{align*}
for all $v\in \T_{p_k} {\cal M}$. Hence, we can conclude that equation \eqref{eq:mappauxGc1c2} enables us to determine $\B_{k,\alpha}$ solely through evaluations of the function $f$.
\end{remark}

 \section{Numerical experiments}

In this section, the numerical performance of Algorithm 1 is illustrated by implementing the derivative-free form with the Manopt package \cite{Manopt}, where the parameters $\sigma_{1} = 1$ and $\theta = 1$, the approximated gradient $g_{k,\alpha}$ and approximated Hessian $B_{k,\alpha}$ are computed by \eqref{eq:algAdehgr} and \eqref{eq:mappauxGc1c2}, respectively. The Riemannian conjugate gradient method with Polak-Ribiere update formula was used to solve the cubic subproblem in Algorithm 1 \cite{Manopt}. The outer iteration terminates when $\|g_{k,\alpha}\|\leq 10^{-8}$. For the exactness of the derivative-free form of Algorithm 1 and comparison, the ARC method \cite{agarwal2021adaptive} is implemented as well. The cubic subproblem in the ARC method is also solved by using the Riemannian conjugate gradient method with Polak-Ribiere update formula. The ARC method terminates when $\|\grad f(p_{k})\|\leq 10^{-8}$. 
All the codes executions are carried out on a MacbookPro running macOS Ventura, 13.2.1, with 16 GB RAM, an Apple M1 Pro CPU, and Matlab R2022a. Additionally, to ensure reproducibility, the randomness is fixed by using the \texttt{rng(2024)} command.

We first consider six Riemannian optimization problems on six different Riemannian manifolds:
\begin{enumerate}
\item Top eigenvalue is to solve
\begin{equation*}
\max_{X\in \mathrm{Sp}(r)} \frac{1}{2}X^{T}AX,
\end{equation*}
where $A\in \mathbb{R}^{r\times r}$ is symmetric (randomly generated from i.i.d Gaussian entries) and $\mathrm{Sp}(r) = \{X|X\in\mathbb{R}^{r}, X^{T}X=1\}$ is a sphere manifold. Optimal objective value corresponds to the largest eigenvalue of $A$.

\item Dominant invariant subspace is to solve
\begin{equation*}
\max_{X\in \mathrm{Gr}(r,t)} \frac{1}{2}\mathrm{Tr}(X^{T}AX),
\end{equation*}
where $A\in \mathbb{R}^{r\times r}$ is symmetric (randomly generated from i.i.d Gaussian entries) and $\mathrm{Gr}(r,t) = \{\mathrm{span}(X): X\in\mathbb{R}^{r,t}, X^{T}X=I_{t}\} $ is a Grassmann manifold. Optima correspond to dominant invariant subspaces of A \cite{EdelmanAriasSmith1999}.

\item Elliptopt SDP is to solve
\begin{equation*}
\min_{X\in\mathrm{Ob}(r,t)} \frac{1}{2}\mathrm{Tr}(X^{T}AX),
\end{equation*}
where $A\in \mathbb{R}^{r\times r}$ is symmetric (randomly generated from i.i.d Gaussian entries) and $\mathrm{Ob}(r,t) = \{X|X\in\mathbb{R}^{r,t}, (XX^{T})_{ii} = 1, i=1,2,\cdots,r \}$ is an oblique manifold. The above problem is equivalent to the following SDP problem,
\begin{equation*}
\min_{Y\in\mathbb{R}^{r\times r}} \mathrm{AY}, \
\mathrm{s.t.}\ \mathrm{diag}(Y) = 1\ \mathrm{and}\ Y \ \mathrm{is\ positive\ semidefinite}.
\end{equation*}

\item  Truncated SVD is to solve
\begin{equation*}
\max_{U\in\mathrm{St}(r,t), V\in\mathrm{St}(s,t)} \mathrm{Tr}(U^{T}AVN),
\end{equation*}
where $A\in \mathbb{R}^{r,s}$ has i.i.d. random Gaussian entries, $N = \mathrm{diag}(t,t-1,\cdots,1)$ and $\mathrm{St}(r,t) = \{X|X\in\mathbb{R}^{r\times t}, X^{T}X = I_{t}\}$ is a Stiefel manifold. Global optima correspond to the $t$ dominant left and right singular vectors of $A$ \cite{sato2013riemannian}.

\item ShapeFit aims to reconstruct a rigid structure comprising $r$ point $x_{1},\cdots,x_{r}$ in $\mathbb{R}^{t}$ through a least-squares formulation. This reconstruction is derived from noisy measurements of selected pairwise directions $\frac{x_{i}-x_{j}}{\|x_{i}-x_{j}\|}$, where the pairs are uniformly chosen at random \cite{hand2018shapefit}. The set of points is centered and obeys one extra linear constraint to fix scaling ambiguity, so that the search space is a manifold, represent by $\mathrm{Sf}(t,r) = \{X|X\in\mathbb{R}^{t,r}, XE = 0, \mathrm{Tr}(M^{T}X) = 1\}$, where every entry of $E\in \mathbb{R}^{r}$ is $1$ and  $M\in\mathbb{R}^{t,r}$ is a given matrix. Refer \cite{Manopt} to construct the objective function and $M$.

\item Synchronization of rotations is to estimate $t$ rotation matrices $Q_{1}, \cdots , Q_{t}$ in the orthogonal group $\mathrm{SO}(r)$ from noisy relative measurements $H_{ij}\approx Q_{i}Q_{j}^{T}$ for an Erdos-Renyi random set of pairs $(i,j)$ following a maximum likelihood formulation \cite{boumal2013robust}. The details regarding the distribution of measurements and the associated objective function can be found in the referenced material. Additionally, the initial guess is utilzed as the reference to prevent convergence to an undesirable local optimum.
\end{enumerate}

For each problem, we create a single instance and generate a random initial guess, except for problem 6, which is initialized deterministically. Subsequently, we execute each algorithm starting from the same initial guess on that specific instance. From Table \ref{table_1}, it is evident that, for all the given problems, the norms of the approximated Riemannian gradients and Riemannian gradients produced by both Algorithm 1 and ARC are consistently smaller than $10^{-8}$. Notably, except for problem 3, Algorithm 1 and ARC yield identical objective function values, which affirms that Algorithm 1 successfully achieves a minimum for these five problems. To validate the optimality of the point obtained by Algorithm 1 for problem 3, we substitute it into ARC, revealing a corresponding Riemannian gradient of $1.2\times10^{-11}$. This comparison demonstrates that Algorithm 1 outperforms ARC in achieving a superior minimum for problem 3. Figures \ref{function_value} and \ref{norm_of_gradient} visually represent the objective function values and the norms of the approximated Riemannian gradients at each iterate across the six Riemannian optimization problems. The objective function values exhibit monotonic behavior on the tested problems, and as the point approaches the minimum, the norm of the approximated Riemannian gradient rapidly converges, aligning with the convergence theory of adaptive regularization with cubic.

\begin{table}[htbp]
\centering
\setlength{\tabcolsep}{1.5mm}{
\begin{tabular}{cccccccc}
\hline
\multicolumn{2}{c}{} & \multicolumn{3}{c}{Algorithm 1} &  \multicolumn{3}{c}{ARC}  \\
   \cline{3-8}
problem     & manifold     & OFV    & $\|g_{k,\alpha}\|$     & \#It           & OFV    & $\|\grad f(p_{k})\|$     & \#It             \\ \hline
1    & $\mathrm{Sp}(500)$       & 31.5403      & $6.4\times10^{-10}$     & 12      & 31.5403     & $6.8\times10^{-14}$ & 7            \\
2    & $\mathrm{Gr}(45,25)$      & 45.6298      & $4.6\times10^{-10}$     & 20      & 45.6298     & $3.4\times10^{-10}$ & 10            \\
3    & $\mathrm{Ob}(50,15)$       & -1.0700      & $1.7\times10^{-11}$     & 88      & -0.7442     & $8.3\times10^{-9}$ & 51           \\
4    & $\mathrm{St}(35,10)\times \mathrm{St}(25,10)$      & 447.2950      & $2.8\times10^{-9}$     & 42      & 447.2950     & $2.2\times10^{-13}$ & 15          \\
5    & $\mathrm{Sf}(3,500)$          & 5.5249      & $1.6\times10^{-11}$     & 3      & 5.5249     & $1.0\times10^{-12}$ & 3           \\
6    & $\underbrace{[\mathrm{SO}(3)]\times \cdots \times [\mathrm{SO}(3)]}_{50}$     & -3.3746      & $1.6\times10^{-9}$     & 21      & -3.3746     & $1.6\times10^{-9}$ & 6          \\ \hline \\
\end{tabular}}
\caption{The results of Algorithm 1 and ARC on six Riemannian optimization problems. OFV and \#It represent the objective function value and number of iterations, respectively. $\|g_{k,\alpha}\|$ is the norm of the approximated Riemannian gradient in Algorithm 1 and $\|\grad f(p_{k})\|$ is the norm of the Riemannian gradient in ARC.} \label{table_1}
\end{table}

 \begin{figure}[hthp]
\centering
\includegraphics[width=5.5in,height=5.0in]{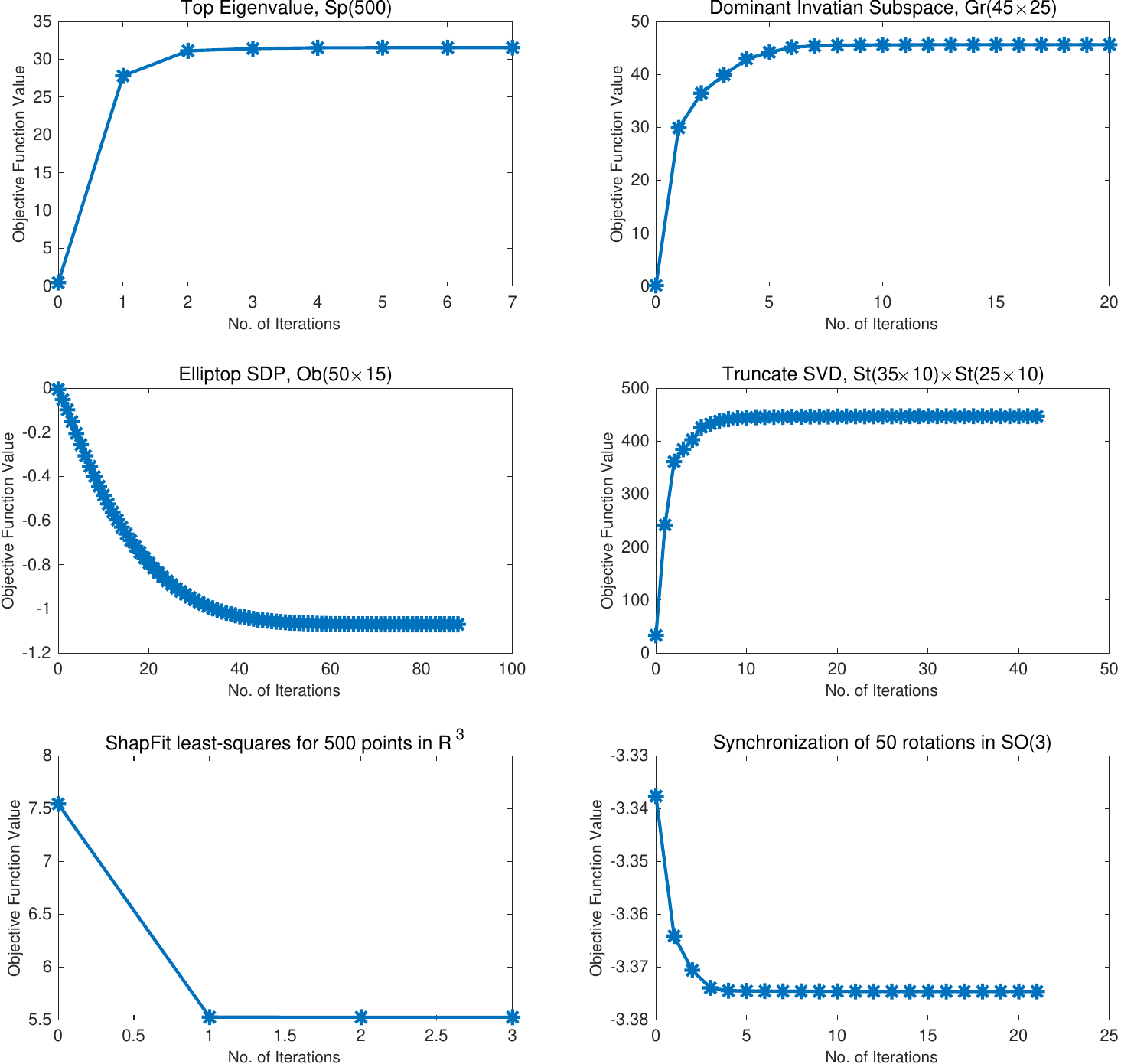}
\caption{Objective function value at each iteration on the six Riemannian optimization problems.}\label{function_value}
\end{figure}

\begin{figure}[hthp]
\centering
\includegraphics[width=5.5in,height=5.0in]{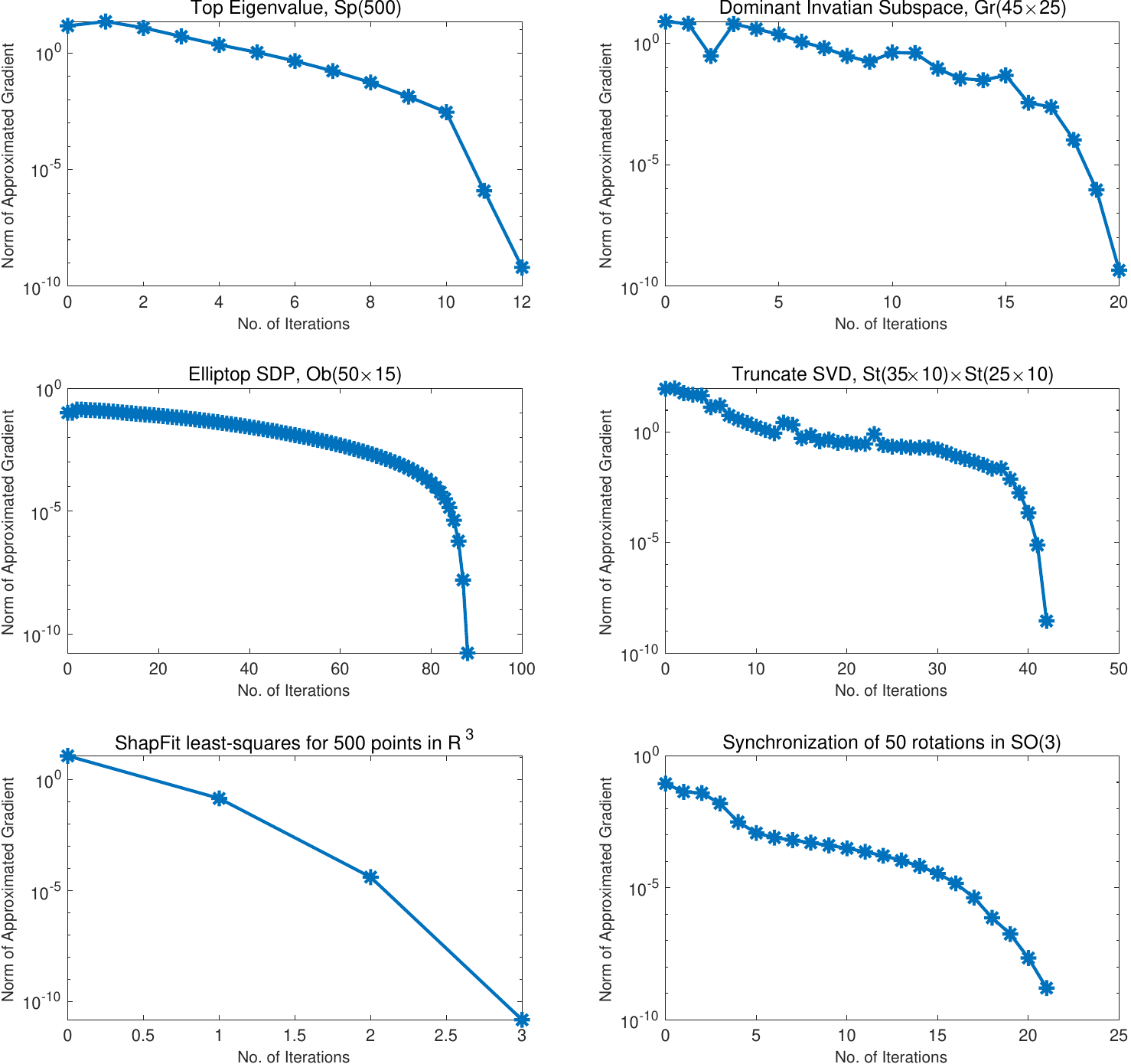}
\caption{Norm of approximated Riemannian gradient at each iteration on the six Riemannian optimization problems.}\label{norm_of_gradient}
\end{figure}

Next, we consider the following problem.
\begin{enumerate}
\item[7.] Minimizing a composite function on a sphere manifold is to solve
\begin{equation}\label{composite_function}
\min_{X\in \mathrm{Sp}(r_{1})} \frac{1}{r_{4}}\|  f_{3}\circ f_{2}\circ f_{1}(X)\|,
\end{equation}
where $f_{i}(X) = \mathrm{Swish}(A_{i}X+b_i)$, $A_{i}\in \mathbb{R}^{r_{i+1}\times r_{i}}, b_{i}\in\mathbb{R}^{r_{i+1}}, i=1,\cdots, 3$. Here, $(\mathrm{Swish}(b))_{j} = \mathrm{swish}(b_{j})$ and $\mathrm{swish}(x) = \frac{x}{1+\exp(-x)}$, namely, the function $\mathrm{Swish}$ is to use function $\mathrm{swish}$ to act on each component. Here, $A_{i}$ and $b_{i}$, $i=1,\cdots, 3$ are randomly generated from i.i.d Gaussian entries and $r_{i}, i=1, \cdots, 4$ are set as $50, 100, 90$ and $80$, respectively. 
\end{enumerate}
The objective function in problem 7 usually appears in the deep learning, which can be considered as a three-layer fully connected neural network. Here, the function $\mathrm{swish}$ is an active function \cite{searching2017}. Since the Riemannian gradient and hessian of the objective function in \eqref{composite_function} is hard to compute, we can not apply ARC. But our Algorithm 1 does not involve the exact Riemannian gradient and hessian, which can be employed to solve problem 7. After applying Algorithm 1, we obtain a point that corresponding objective function value is 0.1770 and norm of the approximated Riemannian gradient $\|g_{k,\alpha}\|$ is $4.0\times 10^{-9}$. Combing with the discussion of previous 6 problems, Algorithm 1 indeed gives a minimum for problem 7. In addition, Figure \ref{figure_composite_function} also displays the objective function value and norm of the approximated Riemannian gradient at each iteration for problem 7.

\begin{figure}[hthp]
\centering
\includegraphics[width=5.5in,height=2.0in]{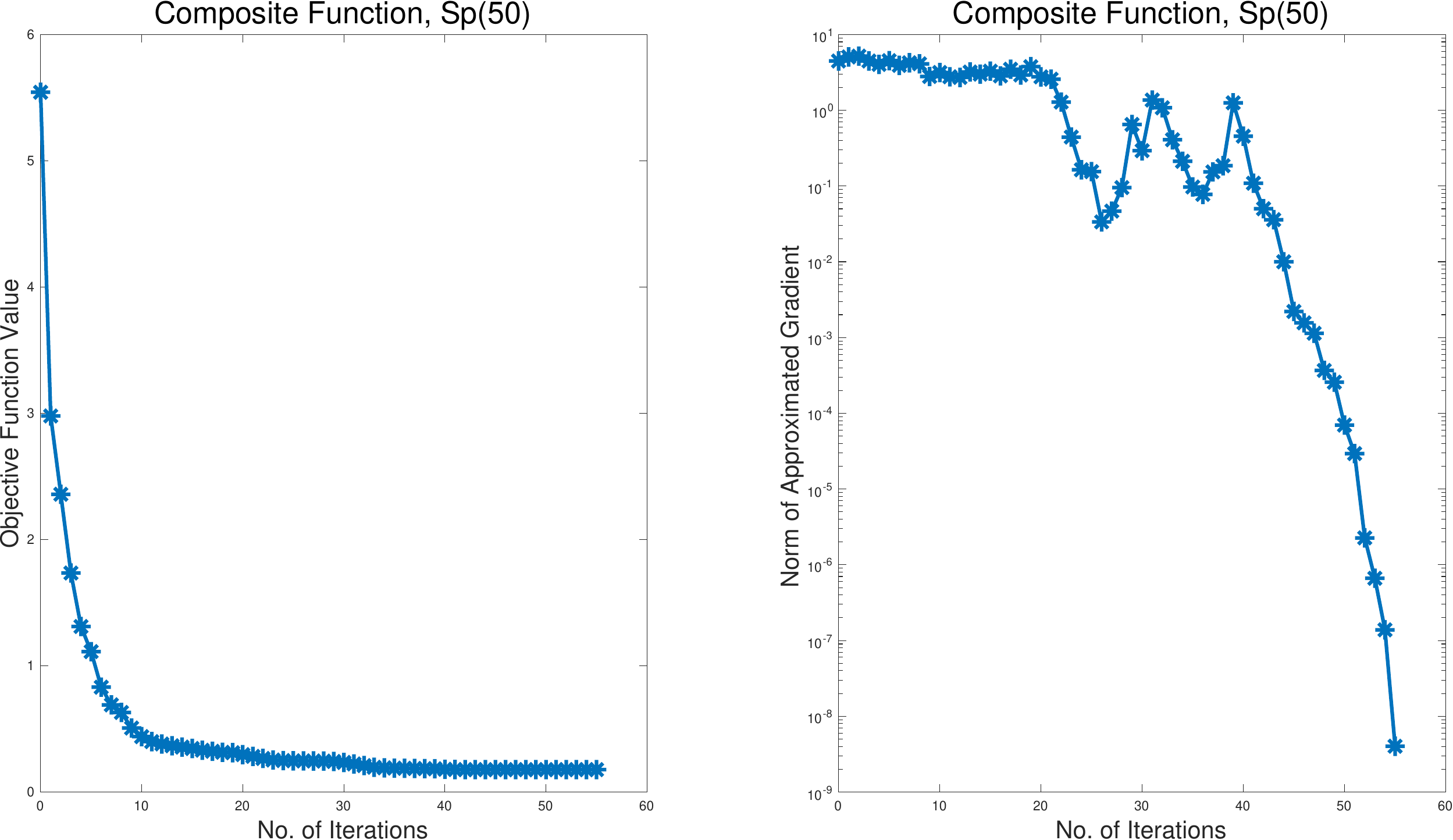}
\caption{Objective function value and norm of approximated Riemannian gradient at each iteration for minimizing the composite function on a sphere manifold.}\label{figure_composite_function}
\end{figure}

Finally, Algorithm 1, being a derivative-free method that includes the computation of orthonormal bases for the tangent space, typically requires more iterations and longer running time when the gradient and Hessian of the Riemannian optimization problem are easily computable. However, in cases where only the objective function value is known, Algorithm 1 proves to be a practical and effective choice.

        \section{Conclusion}

In this paper, we present a quasi-Newton algorithm with cubic regularization tailored specifically for Riemannian manifolds. The distinctive feature of this algorithm lies in its ability to operate without prior knowledge of the gradient and Hessian of the objective function. Instead, it only requires approximations that satisfy a condition analogous to those mandated by inexact algorithms. Importantly, we demonstrate that approximations obtained through finite-differences meet this condition, allowing us to assert that the algorithm can be considered derivative-free.

Regarding future research directions, we find the exploration and application of the inexactness outlined in this paper in algorithms with non-differentiable objective functions to be intriguing. This exploration could potentially lead to the development of a subdifferential-free algorithm, a prospect of particular significance given the often challenging nature of calculating a subgradient.


\def\cprime{$'$}


\end{document}